\def\cA{{\mathcal A}}
\def\cB{{\mathcal B}}
\def\cU{{\mathcal U}}
\newcommand{\set}[1]{\left\{#1\right\}}
\def\cP{\mathcal{P}}
\def\cQ{\mathcal{Q}}
\def\cR{\mathcal{R}}
\def\ii_(#1,#2){i_{#1}^{#2}}
\def\cE{\mathcal{E}}
\def\cF{\mathcal{F}}
\def\cR{\mathcal{R}}
\renewcommand{\Pr}{\operatorname{\bf Pr}}
\newcommand\bfrac[2]{\left(\frac{#1}{#2}\right)}
\newtheorem{theorem}{Theorem}[section]
\newtheorem{lemma}[theorem]{Lemma}
\newtheorem{remark}[theorem]{Ramark}
\newtheorem{corollary}[theorem]{Corollary}
\newtheorem{observation}[theorem]{Observation}
\newtheorem{notation}[theorem]{Notation}
\date{}
\title{A fast algorithm on average for solving the Hamilton Cycle problem}
\author{Michael Anastos\footnote{This project has received funding from the European Union’s Horizon 2020 research and innovation programme under the Marie Sk\l{}odowska-Curie grant agreement No 101034413\includegraphics[width=4.5mm, height=3mm]{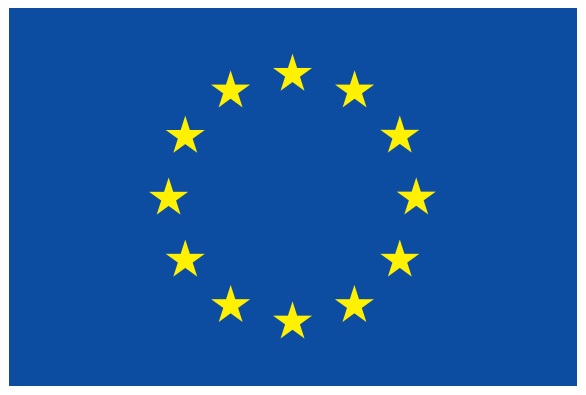}.}\\
Institute of Science and Technology Austria \\
Klosterneuburg, Austria \\
michael.anastos@ist.ac.at}
\date{}
\begin{document}

\maketitle

\begin{abstract}
We present CertifyHAM, a deterministic algorithm that takes a graph $G$ as input and either finds a Hamilton cycle of $G$ or outputs that such a cycle does not exist. If $G\sim G(n,p)$ and $p\geq \frac{100\log n}{n}$ then the expected running time of CertifyHAM is $O(\frac{n}{p})$ which is best possible. This improves upon previous results due to Gurevich and Shelah, Thomason and Alon, and Krivelevich, who proved analogous results for $p$ being constant, $p\geq 12n^{-1/3}$ and $p\geq 70n^{-1/2}$ respectively. 
\end{abstract}

\section{Introduction}

A Hamilton cycle of a graph $G$ is a cycle that visits every vertex of $G$ exactly once. The Hamilton cycle problem, which we denote by HAM, asks to determine whether a given graph $G$ has a Hamilton cycle; it is well known to be NP-complete \cite{karp1972}, and as such, there is no known polynomial time algorithm that solves it. Classic $poly(n)2^n$ time algorithms for solving HAM have been known since the early 1960’s \cite{bellman1962,held1962,kohn1977generating,ryser1963combinatorial}. Such an algorithm is the Inclusion-Exclusion HAM algorithm, described later in Section 2.  Since then 
they have been some faster algorithms that yield subexponential improvements due to Bax and Franklin \cite{bax}, Bj\"orklund \cite{bjorklund2016below}, and Bj\"orklund, Kaski and Williams \cite{bjorklund2019generalized}.
 
The problem of finding a fast algorithm that solves HAM changes drastically if you require an algorithm that is fast for ``typical" (random) graphs instead for all the graphs. Bollob\'as, Fenner and Frieze \cite{bollobasfennerfrrieze}
gave such a deterministic algorithm which we call HHAM. It takes as input the adjacency matrix of a graph $G$, runs in $O(n^{4+o(1)})$ time, and has the property that if $G\sim G(n,p)$ then,
\begin{align*}
    &\lim_{n\to \infty}\Pr( \text{HHAM finds a Hamilton cycle in }G) 
    = \lim_{n\to \infty}\Pr( G\text{ has a Hamilton cycle} ),
\end{align*}for all $p=p(n)\geq 0$.
Here, by $G(n,p)$ we denote the random binomial graph model i.e. if $G\sim G(n,p)$ then $G$ is a graph on $[n]$  where each edge in $\binom{[n]}{2}$ appears in $E(G)$ independently with probability $p$. The threshold for the existence of Hamilton cycles in $G(n,p)$ is well known. For a graph $G$ denote by $\delta(G)$ its minimum degree.  Building upon work of P\'osa \cite{posa1976} and Korshunov \cite{korshunov}, Bollob\'as \cite{bollobas1984}, Koml\'os and Szemer\'edi \cite{komlos1983} and Ajtai, Koml\'os and Szemer\'edi  \cite{ajtai1985} proved that if $G\sim G(n,p)$ where $p=\frac{\log n+\log\log n+c_n}{n}$ then,
\begin{align*}
    &\lim_{n\to \infty}\Pr(G \text{ is Hamiltonian})=
    \lim_{n\to \infty}\Pr(\delta(G) \geq 2 )
=\begin{cases}  
0 \text{ if } c_n\to -\infty.
\\ e^{-e^{-c}}  \text{ if } c_n\to -\infty.
\\ 1  \text{ if } c_n\to -\infty.
\end{cases}
\end{align*}
The result of Bollobas, Fenner and Frieze was improved in terms of running time for some values of $p$, first by Gurevich and Shelah \cite{gurevich1987}, then by Thomason \cite{thomason89} and lastly by Alon and Krivelevich \cite{alon2020}. All of the three corresponding algorithms run in $O(\frac{n}{p})$ time over the input $G\sim G(n,p)$ and take place in the adjacency matrix model. In this model, the algorithm gets the graph's adjacency matrix as an input; the algorithm can access the matrix by querying its entries. Observe that in this setting an $O(\frac{n}{p})$ running time is best possible as any algorithm w.h.p.\footnote{We say that a sequence of events $\{\mathcal{E}_n\}_{n\geq 1}$
holds {\em{with high probability}} (w.h.p.\@ in short) if $\lim_{n \to \infty}\Pr(\mathcal{E}_n)=1-o(1)$.} needs to read/query $\Omega(\frac{n}{p})$ entries of the adjacency matrix of $G$ in order to identify $n$ edges of $G$, the number of edges in a Hamilton cycle. Ferber, Krivelevich, Sudakov and Vieira  showed that $(1+o(1))n/p$ queries suffices if you allow for non-polynomial time algorithms, for $p\geq \frac{\log n+\log\log n +\omega(1)}{n}$ \cite{ferber}.

In another line of research, the Hamilton cycle problem for $G\sim G(n,p)$ is considered in the setup where $G$ is given in the form of randomly ordered adjacency lists. That is, instead of the adjacency matrix of $G$, for every vertex $v\in [n]$  we are given a random permutation of its neighbors via a list $L(v)$. In this setup, Angluin and Valiant \cite{angluin1979} gave a randomized algorithm that finds a Hamilton cycle w.h.p. in $G(n,p)$ for $p \geq \frac{C\log n}{n}$ where $C$ is a sufficiently large constant. Their result was improved with respect to $p$ first by Shamir \cite{shamir1983} and then by Bollob\'as, Fenner and Frieze \cite{bollobasfennerfrrieze} whose result is optimal with respect to $p$. Recently it was improved by Nenadov, Steger and Su with respect to the running time \cite{nenadov2020}. We summarize the above results in Table 1 given below.

\begin{table}[htbp]\label{table1}
\caption{Algorithms that solve HAM for $G\sim G(n,p)$ \em{with high probability}}
\begin{tabular}{|c|c|c|c|c|}
\hline
\textbf{ Authors} &\textbf{ Year} &\textbf{ Time} &\textbf{  p(n) }& \textbf{ Input}  \\ \hline 
 Angluin, Valiant \cite{angluin1979} & `79 & $O(n \log^2n)$ & $p\ge \frac{C_1 \log(n) }{n} $ &  adj. list \\ \hline
 Shamir \cite{shamir1983} & `83 & $ O(n^2)$ & $p\ge \frac{\log n + (3+\epsilon)\log\log n }{n} $ &  adj. list \\ \hline
 Bollobas, Fenner, Frieze \cite{bollobasfennerfrrieze} & `87 &$O( n^{4 + o(1)}) $ & $p\ge 0$ & adj. list or matrix  \\ \hline
 Gurevich, Shelah \cite{gurevich1987}& `87 &$ O(n/p) $ & $p$ const.&  adj. matrix\\ \hline
 Thomason \cite{thomason89} & `89 &$  O(n/p) $ & $p\ge 12 n^{-1/3} $ &  adj. matrix \\ \hline
 Alon, Krivelevich \cite{alon2020} & `20 &$ O(n/p)$ & $p\ge 70n^{-1/2} $ & adj. matrix \\ \hline
 Nenadov, Steger, Su \cite{nenadov2020} & `21 & $O(n)$ & $p\geq \frac{C_3\log n }{n}$ & adj. list \\ \hline
\end{tabular}

\end{table}

Another way to cope with the hardness of Hamiltonicity is by devising algorithms with a polynomial average running time. In \cite{gurevich1987}, Gurevich and Shelah gave an algorithm that determines the Hamiltonicity of an $n$-vertex graph in polynomial time on average. Equivalently, as $G(n,1/2)$ assigns the uniform measure over all graph on $n$ vertices, the expected running time of their algorithm over the input distribution $G\sim G(n,p)$ is polynomial in $n$ when $p=1/2$. This last statement raises the following question, for which values of $p$ does there exist an algorithm that solves HAM in polynomial expected running time over the input distribution $G\sim G(n,p)$. This problem is also stated as Problem 19 in the survey on Hamilton cycles in random graphs by Frieze \cite{frieze2019}. It is strictly harder than finding an algorithm that solves HAM with high probability; an algorithm with average running time can be turned into the latter by simply terminating it after a set number of steps. Gurevich and Shelah  \cite{gurevich1987}, Thomason \cite{thomason89} and Alon and Krivelevich \cite{alon2020}  gave such an algorithm  for $p\in[0,1]$ being constant, $p\geq 12n^{-1/3}$ and $p\geq 70n^{-1/2}$ respectively. respectively.

\begin{table}[htbp]
\caption{Algorithms that solve HAM for $G\sim G(n,p)$ \em{on average}}
\centering
\begin{tabular}{ |c|c|c|c|c| }
\hline
\textbf{ Authors} &\textbf{ Year} &\textbf{ Time} &\textbf{  p(n) }& \textbf{ Input}  \\ \hline 
 Gurevich, Shelah \cite{gurevich1987}& `87 &$ O(n/p) $ & $p$ const.&  adj. matrix\\ \hline
 Thomason \cite{thomason89} & `89 &$  O(n/p) $ & $p\ge 12 n^{-1/3} $ &  adj. matrix \\ \hline
 Alon, Krivelevich \cite{alon2020} & `20 &$ O(n/p)$ & $p\ge 70n^{-1/2} $ & adj. matrix \\ \hline
\end{tabular}
\end{table}

Theorem \ref{thm:main} falls into the area of average-case analysis of algorithms. For a complete survey on this area see \cite{bogdanov2006}. It is motivated by the idea that the worst-time complexity of an algorithm is usually based on some well-orchestrated instance and such instances are atypical. Thus the average time complexity may be a better measure of the performance of an algorithm. In addition, using algorithms that run fast on average is one way to cope with NP-hard problems.

A distributional problem is a pair  $(L,D)$ where $L$ is a decision problem and $D=\{D_n\}_{n\geq 1}$ where $D_n$ is a  probability distribution on the inputs of $L$ of size $n$.
We say that a distributional problem $(L,D)$ belongs to $AvgP$ if there exists an algorithm $A$ (with running time $t_A(x)$ for an instance $x$ of $L$), a constant $\epsilon$ and a polynomial $p()$ with the property that 
$$ \mathbb{E}_{x\sim D_n}(t_A^{\epsilon}(x))=O(p(n)). $$
Let $\mathcal{P}_p=\{G(n,p)\}_{n\geq 1}$. Theorem \ref{thm:main} and the upper bound on $\Pr(\delta(G)\geq 2)$ given in Lemma \ref{lem:degree}, give the following. 
\begin{corollary}\label{cor:main}
$(HAM,\mathcal{P}_p)$ belongs to $AvgP$ for all $p\geq 0$.
\end{corollary}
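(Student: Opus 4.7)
The plan is to split on the value of $p$. For $p\ge 100\log n/n$, I simply apply Theorem~\ref{thm:main} with the exponent $\epsilon=1$: it gives $\E_{G\sim G(n,p)}[t_{CertifyHAM}(G)]=O(n/p)\le O(n^2/\log n)$, which is polynomial in $n$, so $(HAM,\mathcal P_p)\in AvgP$ in this regime.

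For $p<100\log n/n$ I would use a composite algorithm $A_p$ that first attempts to certify non-Hamiltonicity via the minimum degree and only then falls back to a brute-force solver. Specifically, $A_p$ reads the adjacency matrix and computes all degrees in $O(n^2)$ time; if $\delta(G)<2$ it outputs ``no Hamilton cycle'' (correct, since a vertex of degree at most $1$ cannot lie on a Hamilton cycle); otherwise it invokes the inclusion--exclusion algorithm mentioned in Section~2, which solves HAM in time $O^*(2^n)$. For a constant $\epsilon\in(0,1)$ to be fixed later, the expected $\epsilon$-th moment of the running time splits as
\[\E\!\left[t_{A_p}(G)^\epsilon\right] \ \le \ O(n^{2\epsilon}) \ + \ \Pr(\delta(G)\ge 2)\cdot O\!\left(2^{\epsilon n}\cdot n^{O(\epsilon)}\right).\]
Lemma~\ref{lem:degree} supplies the upper bound on $\Pr(\delta(G)\ge 2)$ that forces the second term to be polynomial in $n$ once $\epsilon$ is chosen small enough.

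The hard part will be calibrating $\epsilon$ against the bound from Lemma~\ref{lem:degree} so that the trade-off works uniformly across $p\in[0,100\log n/n]$. If that bound is of the form $\Pr(\delta(G)\ge 2)\le 2^{-\Omega(n)}$ throughout the range, the argument goes through with any sufficiently small constant $\epsilon$; otherwise, the narrow window $p\in[(\log n+\log\log n+\omega(1))/n,\ 100\log n/n]$, where the graph is typically Hamiltonian but the degree certificate is unavailable, would need a supplementary argument, presumably an extension of the expected running time bound of Theorem~\ref{thm:main} to somewhat smaller $p$. Once the right $\epsilon$ is fixed, the bound $\E[t_{A_p}(G)^\epsilon]=n^{O(1)}$ yields $(HAM,\mathcal P_p)\in AvgP$ for every $p\ge 0$.
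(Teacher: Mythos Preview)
Your overall plan is the right one and matches what the paper has in mind: use the polynomial-expected-time algorithm for large $p$, and for small $p$ combine the minimum-degree certificate with a brute-force fallback and a small exponent $\epsilon$. The paper does not spell out a proof; it simply states that Corollary~\ref{cor:main} follows from Theorem~\ref{thm:main} together with Lemma~\ref{lem:degree}, which is exactly your two-case strategy.

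The one genuine slip is that you have misread the range of Theorem~\ref{thm:main}. That theorem (on DCerHAM) is valid for all $p\ge 5000/n$, not only for $p\ge 100\log n/n$; the $100\log n/n$ threshold belongs to Theorem~\ref{thm:Rmain} (RCerHAM). So the correct split is at $p=5000/n$, and the ``narrow window'' you worry about---$p$ between $(\log n+\log\log n+\omega(1))/n$ and $100\log n/n$---simply does not arise: it is entirely covered by the large-$p$ case, where the expected running time is already polynomial (Theorem~\ref{thm:main} gives $T_A=O(n^7)$ and $\E[T_B]=O(n)$; the $O(n/p)$ bound you quote is actually Theorem~\ref{thm:mainmain}, but either suffices here).

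For $p<5000/n$ your argument then goes through cleanly. Transferring Lemma~\ref{lem:degree} from $G(n,m)$ to $G(n,p)$ by conditioning on $m=|E(G)|$ (and noting $m<n$ forces $\delta(G)<2$ while $m>5000n$ has probability $e^{-\Omega(n)}$ by Chernoff), one gets $\Pr(\delta(G)\ge 2)\le n e^{-\alpha n}$ for a tiny but fixed constant $\alpha>0$ (of order $e^{-10000}$). Choosing any constant $\epsilon<\alpha/\log 2$ makes $\Pr(\delta(G)\ge 2)\cdot (n^62^n)^{\epsilon}$ polynomial, exactly as you outline. So no ``supplementary argument'' is needed---just the correct threshold from Theorem~\ref{thm:main}.
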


\subsection{Our contribution}
In this paper, we introduce CertifyHAM, an algorithm that solves HAM fast in expectation.
It implements in succession 2 algorithms that solve HAM, first the MatrixCerHAM algorithm and then the DCerHAM algorithm. MatrixCerHAM is a fast algorithm that solves HAM with high probability while DCerHAM solves HAM  slower but in expectation. This paper is devoted to presenting and analyzing DCerHAM.  The description of MatrixCerHAM and proof of Theorem \ref{thm:adjmatrix} are given in \cite{anastos2021fast}.

\begin{theorem}\label{thm:adjmatrix}
There exists a deterministic algorithm MatrixCerHAM that takes as input the adjacency matrix of a graph $G$ and outputs either a Hamilton cycle of $G$ or a certificate that $G$ is not Hamiltonian or FAILURE. If $G\sim G(n,p)$ for some $p=p(n)\geq 0$ then with probability $1-o(n^{-7})$ it outputs either a Hamilton cycle of $G$ or a certificate that $G$ is not Hamiltonian by making at most $\frac{n}{p}+o(\frac{n}{p\log\log n})$ queries and running in $O(\frac{n}{p})$ time.
\end{theorem}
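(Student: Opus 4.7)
The plan for MatrixCerHAM is to combine a very sparse initial sampling of the adjacency matrix with an aggressive Pósa rotation-extension procedure that issues further queries to $G$ only on demand. Throughout, we aim to reveal only those entries that are actually required to construct a Hamilton cycle; the tight query budget $\frac{n}{p}+o(\frac{n}{p\log\log n})$ essentially permits only a lower-order fraction of queries to be ``wasted.''

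Concretely, I would pick for each vertex $v$ a uniformly random permutation $\pi_v$ of the entries in row $v$ of the adjacency matrix. In an exposure subroutine, we query the entries of each row $v$ in the order $\pi_v$, stopping once two neighbors of $v$ have been found (or the whole row has been exhausted). The resulting revealed graph $H$ has $\delta(H)\ge 2$ on the surviving vertex set; any vertex of degree $<2$ in $G$ gives an immediate certificate of non-Hamiltonicity. Next, I would build a Hamilton cycle using the Pósa rotation-extension procedure inside $H$, querying fresh entries of $G$ only when a current endpoint has no unexposed neighbor in $H$; each new query continues along the same permutations $\pi_v$, so nothing is queried twice.

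The analysis splits into three parts. First, for the exposure phase, a Chernoff bound shows that w.h.p.\ each $v$ sees its first two neighbors within $O(1/p)$ queries, and a second-moment computation refines this to show that the total count concentrates at $\frac{n}{p}+o(\frac{n}{p\log\log n})$. Second, the Pósa rotation-extension inside $H$ succeeds w.h.p.\ because $H$ inherits the standard expansion property of $G(n,p)$ (every set $S$ with $|S|\le n/4$ satisfies $|N_H(S)|\ge 2|S|$), which I would establish by a union bound over the sets of the relevant sizes, using sharp estimates for the probability that a random edge-subsample fails to expand. Third, for rotations that get stuck I would show that the conditional probability that a new query reveals a usable edge is still $p$, so each rotation resolves within $O(1/p)$ additional queries and the total rotation cost stays within the slack budget $o(\frac{n}{p\log\log n})$. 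Running time is $O(n/p)$ because rotations process each path endpoint candidate in $O(1)$ amortized time using a doubly linked path data structure.

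The main obstacle is shaving the slack to $o(\frac{n}{p\log\log n})$ rather than the easier $O(\frac{n\log n}{p})$ that the standard analysis would give. The delicate point is the ``endgame'' phase, where the path has already swallowed almost all vertices: rotation endpoints typically have only $O(1)$ free neighbors in $H$, so occasionally further queries are needed, and one must show by tail bounds that the bad event ``many endgame rotations each consume many queries'' has probability $o(n^{-7})$. This forces a fairly careful second-moment argument, conditioned on the structural properties of $H$ (expansion, revealed degree distribution) that themselves hold with probability $1-o(n^{-7})$ via a union bound over low-probability bad configurations.
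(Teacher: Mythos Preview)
First, note that this paper does \emph{not} contain a proof of Theorem~\ref{thm:adjmatrix}: immediately after stating it, the paper says ``The description of MatrixCerHAM and proof of Theorem~\ref{thm:adjmatrix} are given in \cite{anastos2021fast}.'' So there is no in-paper argument to compare your proposal against.

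That said, your proposal has genuine gaps that would prevent it from establishing the theorem as stated.

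\textbf{Determinism.} The theorem asserts the existence of a \emph{deterministic} algorithm. Your construction begins by choosing, for each vertex $v$, a uniformly random permutation $\pi_v$ of its row. That makes the algorithm randomized. Derandomizing this step is not a triviality you can wave away: the entire query-count analysis (concentration of the number of trials to the first two successes, independence of the $X_i$'s in the rotation phase, etc.) leans on the randomness of the $\pi_v$'s.

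\textbf{Query budget.} The allowed number of queries is $(1+o(1))\tfrac{n}{p}$, with the $o(1)$ required to be $o(1/\log\log n)$. In your exposure phase, each vertex is queried until two $1$'s are seen; the expected number of trials per vertex is $2/p$, giving $\sim 2n/p$ queries in total. Even after accounting for the symmetry $A_G(u,v)=A_G(v,u)$, this does not collapse to $(1+o(1))n/p$: the ``wasted'' $0$-queries alone are $\sim (2/p-2)$ per vertex, i.e.\ $\sim 2n/p$ overall. Achieving the stated bound essentially forces an algorithm whose queries succeed with asymptotic frequency $1$, which your exposure phase does not.

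\textbf{Expansion of $H$.} The revealed graph $H$ has roughly $n$ edges and minimum degree about $2$. Such a graph cannot satisfy the P\'osa expansion condition $|N_H(S)|\ge 2|S|$ for all $|S|\le n/4$: already a single Hamilton path in $H$ has intervals $S$ with $|N_H(S)|=2$. The union-bound argument you sketch would therefore fail, and the rotation-extension step inside $H$ is not justified. To get expansion you need substantially more revealed edges, which in turn conflicts with the query budget above.
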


A $2$-matching of a graph $G$ is a set of edges $M\subseteq E(G)$ such that every vertex in $G$ is incident to at most two edges in $M$. We say that a 2-matching $M$ saturates a set of vertices $S\subset V(G)$ if every vertex in $S$ is incident to exactly two edges in $M$. Thus a necessary condition for $G$ to be Hamiltonian is that for every $S\subseteq V(G)$ there exists some 2-matching $M_S$ that saturates $S$. The certificate, described at Theorem \ref{thm:adjmatrix},  consists of a set of vertices that violates this necessary condition.

\begin{theorem}\label{thm:main}
There exists a deterministic algorithm DCerHAM that takes as input the adjacency matrix of a graph $G$ and outputs either a Hamilton cycle of $G$ or a certificate that $G$ is not Hamiltonian. If $G\sim G(n,p)$, $p\geq \frac{5000}{n}$ then the running time $T$ of DCerHAM with input $G$ can be decomposed into two random variables, $T_A$ and $T_B$ where $T_A$ is bounded above by $O(n^7)$ and $T_B$ has expected value $O(n)$.
\end{theorem}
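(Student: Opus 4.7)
The plan is to design DCerHAM as a two-stage procedure: Stage A is a deterministic polynomial-time routine contributing to $T_A$, and Stage B is an exact exponential-time fallback contributing to $T_B$ and invoked only when Stage A fails. For Stage A, I would run a P\'osa-style rotation-extension algorithm (in the spirit of \cite{bollobasfennerfrrieze, alon2020}) starting from an initial near-spanning 2-matching, aiming to build a Hamilton cycle; concurrently, Stage A searches for a certificate of non-Hamiltonicity via the 2-matching obstruction of Theorem~\ref{thm:adjmatrix} over a canonical polynomial-size family of candidate sets. Stage A's runtime is capped at $O(n^7)$ (the exponent chosen generously to allow enumeration over pairs of endpoints, short rotation sequences, and candidate witness sets), giving $T_A \le O(n^7)$ by construction. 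If Stage A outputs a Hamilton cycle or a certificate, return it and terminate.

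If Stage A fails, Stage B invokes a classical $O(2^n\cdot n^{O(1)})$-time exact HAM algorithm such as the inclusion-exclusion algorithm mentioned in Section~2, always outputting either a Hamilton cycle or the verdict ``$G$ is not Hamiltonian'' (the inclusion-exclusion computation itself serving as the certificate in the negative case). Then
\[
\E[T_B]\;\le\;\Pr[\text{Stage A fails on }G\sim G(n,p)]\cdot O(2^n\cdot n^{O(1)}),
\]
so for $\E[T_B]=O(n)$ it suffices to prove that the failure probability of Stage A is at most $2^{-n(1-o(1))}/n^{O(1)}$.

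The main obstacle is establishing this super-exponential concentration when $p=\Theta(1/n)$. Standard rotation-extension analyses only yield $1-n^{-O(1)}$ success probabilities, which is far too weak. I would attack this by isolating a (polynomial-size) catalog of ``bad structural events''---for instance: existence of small non-expanding subsets, of small vertex cuts, of clusters of low-degree vertices, or of path endpoints not reachable by any short rotation sequence---whose union contains every input on which Stage A can possibly fail, and then bound each such event by $2^{-\Omega(n)}$ via Chernoff-type concentration applied to the $\Theta(n)$ random edges of $G(n,p)$, combined with a union bound over at most $2^n$ vertex subsets. The sharpness of the constant $5000$ in the lower bound on $p$ should enter precisely at this step, since we need the Chernoff exponent to beat the $n\log 2$ from the subset union bound. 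Once this concentration is in place, the required decomposition $T=T_A+T_B$ with the stated bounds follows directly.
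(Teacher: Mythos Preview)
Your two-stage dichotomy has a genuine gap: it requires Stage~A to fail with probability $2^{-n(1-o(1))}$, and for $p$ in the range $\frac{5000}{n}\le p\le\frac{500\log\log n}{n}$ no polynomial-time deterministic Stage~A can achieve this. The obstruction is that there are \emph{moderately} unlikely events---a non-expanding set of size $i$ in (a subgraph of) $G$, for small $i$---which occur with probability on the order of $\binom{n}{i}^{-1}$, not $2^{-\Omega(n)}$. When such a set is present, P\'osa rotations stall (the endpoint sets stay small), yet $G$ is typically still Hamiltonian, so your 2-matching certificate search also comes back empty; Stage~A fails. Your proposed union bound over $\le 2^n$ subsets with per-event probability $2^{-\Omega(n)}$ cannot cover these events, because for a size-$i$ set the relevant Chernoff exponent scales like $i\cdot np$, not like $n$.

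The paper resolves this with a \emph{graded} response rather than a binary one. The algorithm CerHAM maintains a growing set $S$ of ``bad'' vertices; whenever rotations produce a small endpoint set $W_i$, it enlarges $S$, runs a brute-force search (FindSparse) over subsets of size up to $|S|$, and then invokes a local exponential routine (CoverAndAdjust, built on inclusion--exclusion over $O(|S|)$ vertices) to cover $S$ by paths and continue. Only if $|S|$ reaches $\Theta(n)$ does the full $2^n$ fallback fire. Crucially, $T_B$ is not $\mathbb{I}[\text{fail}]\cdot 2^n n^{O(1)}$ but a sum of the form
\[
\mathbb{E}(T_B)\;=\;O\!\bigg(n^72^n\Pr(\cE_{\mathrm{exp}}\vee\cE_{\mathrm{count}}\vee\cE_{0.01n})
\;+\;\sum_{i\le 0.02n}\Big[i^4\tbinom{n}{i}+i^72^{3i}n\Big]\Pr(\cA_i)\;+\;\cdots\bigg),
\]
where $\cA_i$ is the event that a non-expanding set of size $i$ exists. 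The exponential term is genuinely $2^{-\Omega(n)}$-rare, while each summand $i^4\binom{n}{i}\Pr(\cA_i)$ is shown to be $O(1)$ (Lemma~\ref{lem:color}), so the sum is $O(n)$. For the sparse regime the paper further preprocesses by peeling to the $6$-core of $G_1=G\cap H_n$ and checking $\delta(G)\ge 2$; this is exactly what makes the $\cA_i$ bounds go through down to $p=5000/n$. Your proposal is missing this intermediate cost-balancing mechanism; without it, the expectation bound on $T_B$ does not close.
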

\begin{theorem}\label{thm:mainmain}
There exists a deterministic algorithm CertifyHAM that takes as input the adjacency matrix of a graph $G$ and outputs either a Hamilton cycle of $G$ or a certificate that $G$ is not Hamiltonian. If $G\sim G(n,p)$, $p\geq \frac{5000}{n}$ then its expected running time is $O(\frac{n}{p})$.
\end{theorem}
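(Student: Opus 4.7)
The natural approach is to define CertifyHAM as the two-stage algorithm that first executes MatrixCerHAM and, in the (rare) event that MatrixCerHAM reports FAILURE, invokes DCerHAM as a fallback. Since DCerHAM is guaranteed by Theorem \ref{thm:main} to return either a Hamilton cycle or a certificate of non-Hamiltonicity on any input, and MatrixCerHAM (by Theorem \ref{thm:adjmatrix}) never returns an incorrect answer, the composition is correct on every input. All the work lies in bounding the expected running time for $G\sim G(n,p)$ with $p\geq 5000/n$, the regime required for Theorem \ref{thm:main} to apply.

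Write $T_{\text{Cert}}=T_{\text{Mat}}+\mathbf{1}_{\cF}\cdot(T_A+T_B)$, where $\cF$ is the event that MatrixCerHAM outputs FAILURE and $T_A,T_B$ are the two parts of the DCerHAM running time supplied by Theorem \ref{thm:main}. I would bound each of the three summands separately in expectation. By Theorem \ref{thm:adjmatrix} the first contributes $O(n/p)$, deterministically. The same theorem yields $\Pr(\cF)=o(n^{-7})$, so combining this with the deterministic bound $T_A\leq Cn^7$, the middle summand contributes at most $Cn^7\cdot o(n^{-7})=o(1)$. For the last summand, since $T_B\geq 0$, we have $\E[\mathbf{1}_{\cF}\cdot T_B]\leq \E[T_B]=O(n)$ directly from Theorem \ref{thm:main}; no conditional estimate on $\E[T_B\mid \cF]$ is needed.

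Summing the three contributions and using $O(n)\subseteq O(n/p)$ (because $p\leq 1$) gives $\E[T_{\text{Cert}}]=O(n/p)$, as required. There is no real obstacle beyond stitching together the two earlier theorems; the arithmetic is essentially tautological. The one conceptual point worth emphasizing is that the $o(n^{-7})$ failure probability in Theorem \ref{thm:adjmatrix} has been calibrated precisely to dominate the $O(n^7)$ worst-case portion $T_A$ of DCerHAM, so that the occasional invocation of the slower algorithm costs only $o(1)$ in expectation, while its expected-time portion $T_B$ is already $O(n)=O(n/p)$ unconditionally.
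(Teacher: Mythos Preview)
Your proposal is correct and matches the paper's proof essentially line for line: define CertifyHAM as MatrixCerHAM followed by DCerHAM upon FAILURE, then bound the expected running time as $O(n/p)+\Pr(\cF)\cdot O(n^7)+\E[T_B]=O(n/p)+o(1)+O(n)=O(n/p)$. The only cosmetic difference is that you make the inequality $\E[\mathbf{1}_{\cF}T_B]\le\E[T_B]$ explicit, whereas the paper simply writes $+T_B$ in the upper bound.
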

\begin{remark}
In \cite{anastos2021fast} along MatrixCerHAM, the algorithm ListCerHAM is presented. ListCerHAM is an algorithm that solves HAM in $O(n)$ time with probability $1-o(n^{-7})$ when the input $G$ is drawn according to $G(n,p)$ for some $p=p(n)\geq 0$ in the adjacency list model. By first running ListCerHAM and then, in case of a FAILURE, generating the adjacency matrix of $G$ in $O(n^2)$ time and finally running the DCerHAM algorithm ones solves HAM in $O(n)$ expected running time  in the adjacency lists model.
\end{remark}
\textbf{Proof of Theorem \ref{thm:mainmain}:}
CertifyHAM algorithm: Run MatrixCerHAM. If it returns FAILURE, then run DCerHAM.  By theorems \ref{thm:adjmatrix} and \ref{thm:main}, CertifyHAM is a deterministic algorithm that solves HAM. Its expected running time is bounded above by the Running time of MatrixCerHAM  $+\Pr($MatrixCerHAM returns FAILURE) $\times$ $T_A$+$T_B$.
By theorems \ref{thm:adjmatrix} and \ref{thm:main} this is equal to $O(\frac{n}{p})+o(n^{-7})\cdot O(n^7)+O(n)=O(\frac{n}{p}).$
\qed

The highlight of this paper is Theorem \ref{thm:main} since for any $ p\leq 70n^{-0.5}$ not even a subexponential expected running time algorithm for solving HAM was known. On the other hand Theorem \ref{thm:main} is valid for values of $p$ pass the threshold for Hamiltonicity, which may be consider as a natural barrier for this problem. 

In all of the above theorems, we assume that we work in the adjacency matrix model. Thus the algorithms are given $A_G$, the adjacency matrix of $G$, and each entry of $A_G$ can be accessed in $O(1)$ time. Recall that in this model any algorithm that solves HAM has to identify at least $n$ edges of $G$; hence w.h.p. it has to read at least $(1+o(1))n/p$ entries of the matrix $A_G$. Thus the running times of both MatrixCerHAM and CertifyHAM are optimal, this is not true for DCerHAM. In addition, MatrixCerHAM makes asymptotically the minimum number of queries possible, thus matching the result of  Ferber, Krivelevich, Sudakov and Vieira in this aspect. It is also worth highlighting that all of MatrixCerHAM, DCerHAM and CertifyHAM are deterministic, as opposed for example to the algorithm of  Nenadov, Steger and Su \cite{nenadov2020} which is a randomized one. 

DCerHAM arises from derandomizing RCerHAM, a randomized algorithm that solves HAM. Its performance is given by Theorem \ref{thm:Rmain}. The D and R in the names DCerHAM and RCerHAM stand for deterministic and randomized respectively. Later on, we also introduce the CerHAM algorithm. CerHAM is the main subroutine of both DCerHAM and RCerHAM.

\begin{theorem}\label{thm:Rmain}
There exists a randomized algorithm RCerHAM that takes as input the adjacency matrix of a graph $G$ and outputs either a Hamilton cycle of $G$ or a certificate that $G$ is not Hamiltonian. If $G\sim G(n,p)$, $p\geq \frac{100\log n}{n}$ then the running time $T$ of RCerHAM with input $G$ can be decomposed into two random variables, $T_A$ and $T_B$ where $T_A$ is bounded above by $O(n^7)$ and $T_B$ has expected value $O(n)$.
\end{theorem}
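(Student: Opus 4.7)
The plan is to structure RCerHAM as a fast randomized core (which accounts for $T_B$) together with a deterministic fallback (which accounts for $T_A$). The core repeatedly invokes the main subroutine CerHAM with fresh random seeds: CerHAM performs a Pósa rotation-extension on $G$, which for $G\sim G(n,p)$ with $p\geq \frac{100\log n}{n}$ produces a Hamilton cycle with constant probability per attempt in $O(n)$ time. If a fixed polynomial number of such attempts fail, RCerHAM hands control to a deterministic routine that always halts in $O(n^7)$ time and outputs either a Hamilton cycle or the $2$-matching-style certificate described after Theorem \ref{thm:adjmatrix}.

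First I would formalize the randomized rotation step so that the new endpoint produced in each round is, conditional on the algorithm's past, approximately uniform over a set of $\Omega(np)=\Omega(\log n)$ unexposed candidates. Standard Pósa booster arguments then imply that each round either closes the cycle or extends the current path with probability $\Omega(1)$. Hence the number of rounds needed is stochastically dominated by a geometric random variable with constant success parameter, and since each round costs $O(n)$ (using pointer structures for the rotations and a lazy exposure of entries of $A_G$), one obtains $\E[T_B]=O(n)$.

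Second I would show that the failure probability of the fast phase is at most $n^{-\omega(1)}$, which together with the deterministic $O(n^7)$ bound on the fallback gives $T_A\leq O(n^7)$ unconditionally and a contribution $\Pr(\text{fallback})\cdot T_A$ that is negligible when RCerHAM is later composed with MatrixCerHAM in the style of the proof of Theorem \ref{thm:mainmain}. Summing the two contributions yields the stated decomposition $T=T_A+T_B$. A natural choice for the fallback is CerHAM run in an exhaustive, derandomized mode, which searches over rotation trees in a bounded polynomial time budget and either finds a Hamilton cycle or extracts a subset $S\subseteq V(G)$ that cannot be saturated by any $2$-matching.

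The main obstacle I anticipate is simultaneously controlling two quantities: that each rotation round succeeds with probability bounded below by a constant uniformly over the partial paths that actually arise during execution, and that the per-round cost really stays at $O(n)$ without hidden logarithmic factors once the random accesses to $A_G$ and the bookkeeping for the rotations are paid for. This forces a careful principle-of-deferred-decisions coupling between the internal randomness of RCerHAM and the edge exposure of $G(n,p)$, so that at each iteration the conditional distribution of unexposed edges retains enough of the $G(n,p)$ structure to drive the constant-probability booster estimate and to rule out pathological adaptive configurations.
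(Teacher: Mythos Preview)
Your proposal has a fatal gap in the fallback. You posit a deterministic routine that ``always halts in $O(n^7)$ time and outputs either a Hamilton cycle or the $2$-matching-style certificate''. No such routine can exist for arbitrary graphs unless $\mathrm{P}=\mathrm{NP}$: a Hamiltonian graph for which your rotation search happens to fail admits no certificate of non-Hamiltonicity, so a polynomially bounded search that does not find a cycle cannot in general conclude anything. In the paper the only subroutine guaranteed to decide HAM on every input is the Inclusion--Exclusion algorithm, which costs $O(n^6 2^n)$; this exponential cost is placed in $T_B$, not in $T_A$, and its expected contribution is controlled by showing that the events $\cE_{exp}'(G,F_0)$ and $\cE_{count}(G,F_0)$ that trigger it have probability $o(n^{-7}2^{-n})$.

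The decomposition in the paper is accordingly inverted relative to yours. The randomness is used exactly once: RCerHAM samples $G'\sim G(n,1/2)$, sets $F_0=E(G)\cap E(G')$, and then runs the deterministic CerHAM$(G,F_0)$ a single time---there are no fresh seeds and no repeated attempts. The quantity $T_A$ is the \emph{unconditional} cost that is always paid: generating $G'$ together with at most $\log_2 n+0.01n^2$ executions of ReducePaths at $O(n^3)$ each, giving $O(n^7)$ deterministically (Lemma~\ref{lem:upperT}). The quantity $T_B$ collects the occasionally expensive pieces---FindSparse, CoverAndAdjust, and the exponential Inclusion--Exclusion fallback---and Lemma~\ref{lem:uppercalcrand} bounds $\E(T_B)$ by summing $n^72^n\Pr(\cE_{exp}'\lor\cE_{count})$ and $\sum_j[j^4\binom{n}{j}+j^72^{3j}n]\Pr(\cA_j')$. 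Your $O(n)$-per-round geometric analysis is closer in spirit to what MatrixCerHAM (Theorem~\ref{thm:adjmatrix}) does than to what is required here.
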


\subsection{The RCerHAM  algorithm}
In this subsection, we give a sketch of the RCerHAM algorithm. For its description, we introduce the definition of a full path packing and the P\'osa rotations procedure. We then present the main elements of its analysis. We also describe how to derandomize it to obtain DCerHAM.

RCerHAM first generates a subset of $E(G)$, $F_0$. To do so, it generates $G'\sim G(n,1/2)$ and then queries the edges of $G$ that belong to $G'$. By querying an edge of $G$ we refer to reading the corresponding entry of $A_G$ and revealing whether it is filled with 1 or 0; equivalently, revealing whether the corresponding edge belongs to $G$ or not. Then it lets $F_0=E(G)\cap E(G')$. It proceeds by executing the CerHAM algorithm, stated in Section 3, with input $G$ (as before, the adjacency matrix of $G$ is given to CerHAM) and $F_0$. Initially CerHAM sets $G_0=([n],F_0)$.

Given a path $P=v_1,v_2,....,v_k$ and $v_iv_k$, $1\leq i<k$ we say that the path $P'=v_1,v_2,...,v_i,v_k,v_{k-1},$ $...,v_{i+1}$ is obtained from $P$  via a P\'osa rotation that fixes $v_1$. For a path $P$ we say the edge $e$ is a {\em{booster}} for $P$ if $\{e\}\cup P$ is either a longer path or a cycle. Many algorithms and heuristics use P\'osa rotations to grow a path of a given graph $G$ (by finding boosters along the way) first into a Hamilton path and then into a Hamilton cycle of $G$. For example, they can be utilized in the following manner. Let $P$ be a $u,v$ path in some graph $Q$. Starting from $P$ perform all sequences of P\'osa rotations of length at most $n$ that fix $v$. Let $\mathcal{P}$ be the set of paths obtained and $End$ be the set of the corresponding endpoints. Now for $u\in End$ let $P_{uv}$ be a $v$-$u$ path in $\mathcal{P}$ and do the following. Starting from $P_{uv}$ perform all sequence of P\'osa rotations of length at most $n$ that fix $u$, let $\mathcal{P}_u$ be the set of paths obtained and $End_u$ be the set of the corresponding endpoints. Observe that for $u\in End$ and $w\in End_u$ the edge $uw$ is a booster of the corresponding path $P_{uw}$ in $\mathcal{P}_u$. In addition every edge from $\{u,w\}$ to $[n]\setminus V(P_{uw})$ is a booster of $P_{uw}$. Then one searches for a booster in the underlying graph $Q$ to get either a longest one or a cycle $C$. If $C$ is not Hamiltonian, it can be opened up and extended via any edge from $V(C)$ to $[n]\setminus V(C)$.

Our algorithm does not grow a single path. Instead, it grows a set of vertex disjoint paths which eventually merge into a single one. We describe this as reducing the size of a full path packing. A full path packing (FPP for shorthand) of $G$ is a set of vertex disjoint paths in $G$ that cover $V(G)$. Here we consider single vertices to be paths of length 0. Thus $G$ has at least one trivial FPP, namely the one that consists of the $n$ paths of length 0 i.e. $[n]$. In addition, a Hamilton path corresponds to an FPP of size 1. For convenience, we refer to a set consisting of a single Hamilton cycle as an FPP of size 0. CerHAM is an iterative deterministic algorithm. It maintains a full path packing $\cP$ of $G$, starting with the trivial one, and a set $S$ with the property $|N_{G_0}(S)|<2|S|$, starting with the empty set (we refer to sets with this property as non-expanding). For a graph $G$ and $S\subseteq V(G)$ we denote by $N_{G_0}(S)$ the set of neighbors of the vertices in $S$ in the set $V(G)\setminus S$. At each iteration, using P\'osa rotations, CerHAM attempts to find an FPP of smaller size, eventually identifying one of size 0 which corresponds to a Hamilton cycle of $G$.

CerHAM runs a while loop until it solves HAM or identifies a very unlikely event, i.e. one that occurs with probability $O(2^{-n})$. In the second case, it exits the loop and implements the Inclusion-Exclusion HAM algorithm, described in Section 2. It starts each iteration with an FPP $\cP$, a non-expanding set $S$ and a set $F$ that consists of all the edges of $G$ that have been queried so far. It lets $G_R$ be the graph whose edges are those of $G$ that have been queried so far, i.e. $E(G_R)=F\cap E(G)$. Then it adds to $G_R$ a set of edges $J$ of size $|\cP|-1$ that join the paths in $\cP$ into a Hamilton path in $G_R\cup J$. It continues by performing sequences of P\'osa rotations and identifies sets $End$, $\{End_u\}_{u\in End}$. It does not perform all the possible P\'osa rotations; instead, it only considers P\'osa rotations for which the endpoints of the obtained paths do not belong to the non-expanding set $S$; the reason will become clear later on. The sets $End$, $\{End_u\}_{u\in End}$ have the property that for every $u\in End$ and $w\in End_u$ there exists a Hamilton path $H_{uw}$ in $G_R\cup J$ from $u$ to $w$ that has at most $|\cP|-2$ edges from $J$, thus the edge $uw$ is a booster for some Hamiltonian path. In addition for $T\in \{End\}\cup\{End_u: u\in End\}$ we will have that the set $T$ is non-expanding. Now either (I) all of the sets $ End,\{ End_u\}_{u\in End}$ are sufficiently large (have size at least $0.25n$) or (II) at least one of them is small. In the first case, it identifies and queries a booster $uw$ with $u\in End$ and $w\in End_u$ that has not been queried yet. If $wu\in E(G)$ then it adds $wu$ to the Hamilton path from $u$ to $w$ to create a Hamilton cycle $H$ that uses at most $|\cP|-2$ edges from $J$. It then considers $H\setminus J$ and updates $\cP$ to a full path packing of a smaller size or claims that $G$ is Hamiltonian; in such a case, we say it improves on $|\cP|$ (see Figure \ref{fig1}).

\begin{figure}[htbp]
\begin{center}
    
\begin{tikzpicture}
\node at (-0.2,0.4) {(a)};
\draw  (0,0)--(2,0);
\draw  (2.5,0) circle [radius=.1];
\draw  (3,0) circle [radius=.1];
\draw  (3.5,0)--(4.5,0);
\draw  (5,0) circle [radius=.1];
\draw  (5.5,0)--(7,0);

\draw (0,0) circle [radius=.1];
\draw (0.5,0) circle [radius=.1];
\draw  [fill=red](1,0) circle [radius=.1];
\draw  [fill=red](1.5,0) circle [radius=.1];
\draw (2,0) circle [radius=.1];
\draw (2.5,0) circle [radius=.1];
\draw (3,0) circle [radius=.1];
\draw (3.5,0) circle [radius=.1];
\draw (4,0) circle [radius=.1];
\draw (4.5,0) circle [radius=.1];
\draw (5,0) circle [radius=.1];
\draw (5.5,0) circle [radius=.1];
\draw  [fill=red](6,0) circle [radius=.1];
\draw (6.5,0) circle [radius=.1];
\draw (7,0) circle [radius=.1];

\node at (-0.2,-1.1) {(b)};
\draw  (0,-1.5)--(2,-1.5);
\draw  (2.5,-1.5) circle [radius=.1];
\draw  (3,-1.5) circle [radius=.1];
\draw  (3.5,-1.5)--(4.5,-1.5);
\draw  (5,-1.5) circle [radius=.1];
\draw  (5.5,-1.5)--(7,-1.5);
\draw[dashed]  (2,-1.5)--(3.5,-1.5);
\draw[dashed]  (4.5,-1.5)--(7,-1.5);

\draw (0,-1.5) circle [radius=.1];
\draw (0.5,-1.5) circle [radius=.1];
\draw  [fill=red](1,-1.5) circle [radius=.1];
\draw  [fill=red](1.5,-1.5) circle [radius=.1];
\draw (2,-1.5) circle [radius=.1];
\draw (2.5,-1.5) circle [radius=.1];
\draw (3,-1.5) circle [radius=.1];
\draw (3.5,-1.5) circle [radius=.1];
\draw (4,-1.5) circle [radius=.1];
\draw (4.5,-1.5) circle [radius=.1];
\draw (5,-1.5) circle [radius=.1];
\draw (5.5,-1.5) circle [radius=.1];
\draw  [fill=red](6,-1.5) circle [radius=.1];
\draw (6.5,-1.5) circle [radius=.1];
\draw (7,-1.5) circle [radius=.1];

\node at (-0.2,-2.6) {(c)};
\draw  (0,-3)--(2,-3);
\draw  (2.5,-3) circle [radius=.1];
\draw  (3,-3) circle [radius=.1];
\draw  (3.5,-3)--(4.5,-3);
\draw  (5,-3) circle [radius=.1];
\draw  (5.5,-3)--(7,-3);
\draw[dashed]  (2,-3)--(3.5,-3);
\draw[dashed]  (4.5,-3)--(7,-3);

\draw (0,-3) circle [radius=.1];
\draw (0.5,-3) circle [radius=.1];
\draw  [fill=red](1,-3) circle [radius=.1];
\draw  [fill=red](1.5,-3) circle [radius=.1];
\draw (2,-3) circle [radius=.1];
\draw (2.5,-3) circle [radius=.1];
\draw (3,-3) circle [radius=.1];
\draw (3.5,-3) circle [radius=.1];
\draw (4,-3) circle [radius=.1];
\draw (4.5,-3) circle [radius=.1];
\draw (5,-3) circle [radius=.1];
\draw (5.5,-3) circle [radius=.1];
\draw  [fill=red](6,-3) circle [radius=.1];
\draw (6.5,-3) circle [radius=.1];
\draw (7,-3) circle [radius=.1];

\node at (7.2,-3.3) {u};
\node at (4,-3.3) {x};
\node at (4.5,-3.3) {y};
\node at (5.5,-3.3) {w};
\node at (6,-3.3) {z};

\draw [thick,dotted] (4,-3) to [out=50,in=130] (7,-3);

\node at (-0.2,-4.1) {(d)};
\draw  (0,-4.5)--(2,-4.5);
\draw  (2.5,-4.5) circle [radius=.1];
\draw  (3,-4.5) circle [radius=.1];
\draw  (3.5,-4.5)--(6,-4.5);
\draw  (5,-4.5) circle [radius=.1];
\draw[dashed]  (2,-4.5)--(3.5,-4.5);
\draw[dashed]  (6,-4.5)--(7,-4.5);

\draw (0,-4.5) circle [radius=.1];
\draw (0.5,-4.5) circle [radius=.1];
\draw  [fill=red](1,-4.5) circle [radius=.1];
\draw  [fill=red](1.5,-4.5) circle [radius=.1];
\draw (2,-4.5) circle [radius=.1];
\draw (2.5,-4.5) circle [radius=.1];
\draw (3,-4.5) circle [radius=.1];
\draw (3.5,-4.5) circle [radius=.1];
\draw (4,-4.5) circle [radius=.1];
\draw (4.5,-4.5) circle [radius=.1];
\draw (5,-4.5) circle [radius=.1];
\draw [fill=red] (5.5,-4.5) circle [radius=.1];
\draw (6,-4.5) circle [radius=.1];
\draw (6.5,-4.5) circle [radius=.1];
\draw (7,-4.5) circle [radius=.1];

\node at (7.2,-4.8) {y};
\node at (4,-4.8) {x};
\node at (4.5,-4.8) {u};
\node at (5.5,-4.8) {z};
\node at (6,-4.8) {w};
\node at (0,-4.8) {v};

\end{tikzpicture}
\caption{(a) An FPP of size $6$, the vertices in $S$ are in red. (b) Adding  $5$ edges (dashed) creates a Hamilton path. (c) Performing a P\'osa using $ux$ results in the path in (d). The edge $uw$, even if present will not be used to perform a P\'osa rotations as such a rotation will result in a path with endpoint $z\in S$. (d) If present, adding the edge $vy$ and removing the dashed edges gives an FPP of size $5$.}
\label{fig1}
\end{center}
\end{figure}
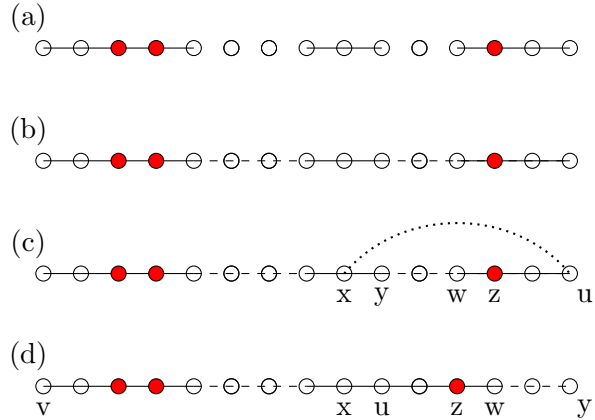

Thus if (II) never occurs at each iteration, our algorithm will improve on $\cP$ with some positive probability and eventually either make too many queries, this is a very unlikely event, or construct a Hamilton cycle. The more convoluted part of our algorithm is the one that deals with the moderately unlike event (II). Broadly speaking, this is where the difficulty of developing fast in expectation algorithms that solve HAM lies; in dealing with events that are moderately unlike and which make it challenging but not impossible to find Hamilton cycles. 

We say that a path $P$ covers the vertex $v$ if $v\in V(P)$ and $v$ is not an endpoint of $P$. In extension we say that a set of paths $\mathcal{P}'$ covers $U$ if for every $v\in U$ there exists $P\in \mathcal{P}'$ such that $P$ covers $v$. If (II) occurs, then as all of $End$, $\{End_u\}_{u\in End}$, are non-expanding, a small non-expanding set $T$ is identified. As both $T$ and $S$ are non-expanding, the set $S\cup T$ is non-expanding. CerHAM  then updates $S=S\cup T$  (we will show that this event occurs with probability $n^{-\Omega(|S|)}$). In the very unlikely event that $S$ is of medium size, say $|S|>2^{\Omega(n/\log n)}$, we exit the while loop. If it does not, the algorithm adds $T$ to $S$ (i.e it replaces $S$ with $S\cup T$) and makes a brute force attempt to find a set of vertex disjoint paths $\mathcal{P}'$ that cover  $S$. If it is unsuccessful, it certifies that $G$ is not Hamiltonian. Else it adjusts  $\mathcal{P}$ such that it contains  $\mathcal{P}'$ (see Figure \ref{fig2}. It then proceeds with the next iteration. Observe that since at the P\'osa rotations part of the algorithm we avoid the ones that result in an endpoint in $S$, before constructing $\mathcal{P}'$ the sets $T$ and $S$ disjoint. Thus at every iteration that (II) occurs, $S$ grows until it becomes of medium size; this implies that (II) occurs at most $n$ and CerHAM eventually terminates.

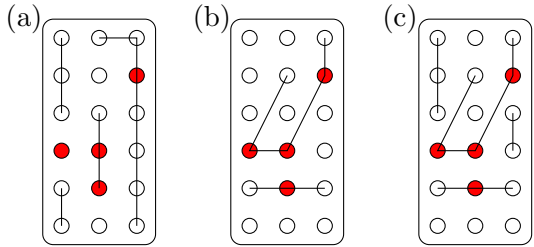
\begin{figure}[htbp]
\begin{center}
\begin{tikzpicture}

\node at (-0.5,2.75) {(a)};

\draw (0,0) circle [radius=.1];
\draw (0,0.5) circle [radius=.1];
\draw[fill=red] (0,1) circle [radius=.1];
\draw (0,1.5) circle [radius=.1];
\draw (0,2) circle [radius=.1];
\draw (0,2.5) circle [radius=.1];

\draw (0.5,0) circle [radius=.1];
\draw [fill=red](0.5,0.5) circle [radius=.1];
\draw [fill=red](0.5,1) circle [radius=.1];
\draw (0.5,1.5) circle [radius=.1];
\draw (0.5,2) circle [radius=.1];
\draw (0.5,2.5) circle [radius=.1]; 

\draw (1,0) circle [radius=.1];
\draw (1,0.5) circle [radius=.1];
\draw (1,1) circle [radius=.1];
\draw (1,1.5) circle [radius=.1];
\draw [fill=red] (1,2) circle [radius=.1];
\draw (1,2.5) circle [radius=.1]; 

\draw  (0,0)--(0,0.5);
\draw  (0,1.5)--(0,2.5);
\draw  (0.5,0.5)--(0.5,1.5);
\draw  (1,0)--(1,2.5)--(0.5,2.5);
\draw[rounded corners] (-0.25, -0.25) rectangle (1.25, 2.75) {};

\node at (2,2.75) {(b)};

\draw (2.5,0) circle [radius=.1];
\draw (2.5,0.5) circle [radius=.1];
\draw[fill=red] (2.5,1) circle [radius=.1];
\draw (2.5,1.5) circle [radius=.1];
\draw (2.5,2) circle [radius=.1];
\draw (2.5,2.5) circle [radius=.1];

\draw (3,0) circle [radius=.1];
\draw [fill=red](3,0.5) circle [radius=.1];
\draw [fill=red](3,1) circle [radius=.1];
\draw (3,1.5) circle [radius=.1];
\draw (3,2) circle [radius=.1];
\draw (3,2.5) circle [radius=.1]; 

\draw (3.5,0) circle [radius=.1];
\draw (3.5,0.5) circle [radius=.1];
\draw (3.5,1) circle [radius=.1];
\draw (3.5,1.5) circle [radius=.1];
\draw [fill=red] (3.5,2) circle [radius=.1];
\draw (3.5,2.5) circle [radius=.1]; 

\draw  (2.5,0.5)--(3.5,0.5);
\draw  (3,2)--(2.5,1)--(3,1)--(3.5,2)--(3.5,2.5);
\draw[rounded corners] (2.25, -0.25) rectangle (3.75, 2.75) {};

\node at (4.5,2.75) {(c)};

\draw (5,0) circle [radius=.1];
\draw (5,0.5) circle [radius=.1];
\draw[fill=red] (5,1) circle [radius=.1];
\draw (5,1.5) circle [radius=.1];
\draw (5,2) circle [radius=.1];
\draw (5,2.5) circle [radius=.1];

\draw (5.5,0) circle [radius=.1];
\draw [fill=red](5.5,0.5) circle [radius=.1];
\draw [fill=red](5.5,1) circle [radius=.1];
\draw (5.5,1.5) circle [radius=.1];
\draw (5.5,2) circle [radius=.1];
\draw (5.5,2.5) circle [radius=.1]; 

\draw (6,0) circle [radius=.1];
\draw (6,0.5) circle [radius=.1];
\draw (6,1) circle [radius=.1];
\draw (6,1.5) circle [radius=.1];
\draw [fill=red] (6,2) circle [radius=.1];
\draw (6,2.5) circle [radius=.1]; 

\draw  (5,0.5)--(6,0.5);
\draw  (5.5,2)--(5,1)--(5.5,1)--(6,2)--(6,2.5);
\draw  (5,2.5)--(5,1.5);
\draw  (6,1)--(6,1.5);

\draw[rounded corners] (4.75, -0.25) rectangle (6.25, 2.75) {};

\end{tikzpicture}
\end{center}
\caption{(a) $\mathcal{P}$-an FPP of size $7$, the set $S$ in red. (b) A set of paths $\mathcal{P'}$ that covers $S$. (c) Removing from (a) the edges incident to  $\mathcal{P'}$ and adding to it the edges in $(b)$ gives a new FPP of size $9$ that covers $S$. 
}
\label{fig2}
\end{figure}

One of the interesting features of our algorithm is that it allows the size of $\cP$ to increase. Indeed, every time (II) occurs $|\mathcal{P}|$ may be increased by at most $2|S|$; this is the price that we pay to ensure that $\mathcal{P}'\subset \mathcal{P}$. Let $\Delta$ be the sum over the iterations at which (II) occurs of the increase of $|\mathcal{P}|$. The above imply that $\Delta\leq 2n^2$ and one may have to wait until $2n^2+n$ distinct iterations where we improve on $|\mathcal{P}|$ occur until a Hamilton cycle is obtained, this is not good enough. To obtain an $O(n)$ bound on $\Delta$  we implement the following additional subroutine whenever (II) occurs. After setting $S=S\cup T$ we look for non-expanding sets $W \subset [n]\setminus S$ of size at most $|S|$ until no such a set exists. As soon as we identify such a set, we add it to $S$ and then continue our search. Once our search is finished, the set $S$ has the property that no set $W\subset [n]\setminus S$ is non-expanding. The next time (II) occurs a non-expanding set $W=T$ is identified, thus $|T|>|S|$. It follows that after each time (II) occurs the set $S$ doubles in size, thus $\Delta \leq  \sum_{i=1}^{\log_2 n}2\cdot 2^i\leq 4n$. Initially, $|\cP|=n$ and $|\cP|=0$ implies that we have identified a Hamilton cycle. Each time an attempt to improve $|\cP|$ is made $|\cP|$ is decreased by at least $1$ with probability $0.5p$. As $\Delta\leq 4n$ we may improve on $|\cP|$ at most $5n$ times until it reaches  $0$. Thus the probability $0.01n^2$ attempts to improve $\cP$ are made is $\Pr(Bin(0.01n^2,0.5p)<5n)$ and the corresponding event is very unlikely, causing CerHAM to exit the while-loop. This completes the description of RCerHAM.

The single point in which DCerHAM and RCerHAM differ is in generating $G_0$. Later on, at the execution of DCerHAM we derandomize this step by letting $F_0$ be the edge set of $H_n$ and defining $G_0$ by $V(G_0)=[n]$ and $E(G_0)=F_0\cap E(G)$. $H_n$ will be a well-chosen pseudorandom graph of linear in $n$ minimum degree containing a constant fraction of the edges of $K_n$. Roughly speaking, a graph $H$ is pseudorandom if the edge counts between any two linear sets is close to what one would expect it to be in $G\sim G(n,p)$ with $p=E(H)/\binom{n}{2}$. Substituting $G(n,0.5)$ with $H_n$ will suffices for our calculations to go through.

\subsubsection{Comparison to previous work}
Our algorithm does not build upon the previous algorithms of Gurevich and Shelah, Thomason and Alon and Krivelevich. For example, the algorithm given by Alon and Krivelevich first builds a cycle that covers all the vertices of degree at most $np/2$ and then grows this cycle by sequentially absorbing the rest of the vertices using local alterations. For example given a cycle $C$ of $G$ on $|V(G)|-1=n-1$ vertices and  a vertex $v$ outside $C$ their algorithm may look for vertices $a,b,c,d$ that appear in this order on the cycle such that if we let $a^-,b^-,c^-,d^-$ be the vertices preceding them on $C$ then $a,d$ are neighbors of $v$ and $a^-b,b^-c,c^-d^-\in E(G)$. In this case $(E(c)\cup \{a^-b,b^-c,c^-d^-,av,vd\})\setminus \{aa^-,bb^-,cc^-,dd^-\}$ is a Hamilton cycle. Their analysis is based on the fact that with probability $1-e^{-\Omega(n)}$ there are at most $np/20$ vertices of degree smaller than $np/2$. This statement is not true for $p=o(n^{-0.5)}$. Thus the vertex $v$ has at least $np/10$ neighbors $a,d$ such that each of $a^-,d^-$ has  at least $np/2$ neighbors $b$ and $c^-$. This gives $\Omega(n^2)$ quadruples $\{a,b,c,d\}$ such that for each quadruple there exists a unique edge $e$ which can be used to absorb $v$ into $C$. On the other hand, the bottleneck of our algorithm is found in the fact that the statement ``the probability there exists a set $S$ of size $m \leq 0.25n$ such that $|N_{G_0}(S)|<2|S|$, where $G_0\sim G(n,0.5p)$, is at most $n^{-1.1m}$" is false for $p$ significantly smaller than $\frac{100\log n}{n}$. 

\subsection{Organization of the paper} 
The rest of the paper is organized as follows. In Section 2, we present a number of subroutines implemented by CerHAM. We give the formal description of both RCerHAM and CerHAM in Section 3. In Section 4 we derandomize RCerHAM and prove Theorem \ref{thm:main}. For ease of the presentation of the paper we assume that $n$ is significantly large for certain arguments to hold and we omit ceilings and floors. Throughout the paper we assume that $n$ is significantly large so that various inequalities hold.
\section{Preliminaries}
\subsection{The Inclusion-Exclusion HAM algorithm}
For a graph $G$, $u,v\in V(G)$, $S\subset V(G)$ and $e\in E(G)$ denote by $G-S$ the subgraph of $G$ induced by $V(G)\setminus S$, by $G-e$ the graph obtained by removing $e$ from $G$, by $A_G$ the adjacent matrix of $G$ and finally by $A_G(v,u)$ the entry of $A_G$ that corresponds to the pair of vertices $v,u$. The Inclusion-Exclusion HAM algorithm is based on the observation that  $A_G^{n-1}(u,v)$ equals  the number of walks from $v$ to $u$ of length $n-1$ and that the number of Hamiltonian paths from $v$ to $u$ can be obtained from the Inclusion-Exclusion principle and the matrices $\{A_{(G-uv)-S}^{n-1}\}_{S\subseteq V(G)\setminus \{v,u\}}$.

More concretely, for each $e\in E(G)$ the Inclusion-Exclusion HAM algorithm executes the following steps. It lets $F=G-e$, $e=\{v,u\}$ and calculates,
$$H(v,u)= \sum_{k=0}^{n-2} (-1)^k\sum_{S \in \binom{ V(F)\setminus\{v,u\}}{k}} A_{F-S}^{n-1}(v,u).$$
If $H(v,u)\neq 0$ it then outputs that $G$ has a Hamilton cycle that passes through the edge $e=\{v,u\}$.
If $H(v,u)= 0$ for every $\{u,v\}\in E(G)$ then it outputs ``$G$ is not Hamiltonian".

For the correctness of the algorithm observe that $A_{F-S}^{n-1}(v,u)$ equals the number of walks from $v$ to $u$ of length $n-1$ that do not pass through $S$. Thus, by the inclusion-exclusion principle, $H(v,u)$ equals to the number of Hamiltonian paths of $G$ from $v$ to $u$. Finally as for each edge at most $n2^n$ matrix  multiplications are performed its worst-time running complexity is $O(|E(G)|\cdot n^3 \cdot n2^n)=O(n^62^n)$.

\subsection{Restricted P\'osa Rotations}
Given a Hamilton path $P=v_1,v_2,....,v_n$ and $v_iv_n$, $1\leq i<n$ we say that the path $P'=v_1,v_2,...,v_i,v_n,v_{n-1},...,v_{i+1}$ is obtained from $P$  via a P\'osa rotation that fixes $v_1$. We call $v_i$ and $v_{i+1}$ the pivot vertex and the new endpoint respectively. We call $v_nv_i$ and $v_iv_{i+1}$ the inserted and the deleted edge respectively. We will not perform all P\'osa rotations possible. Instead, we will perform only the P\'osa rotations for which the pivot vertex does not belong to some prescribed set $S$ and the new endpoint is not an endpoint of some other path that has already been identified. The exact procedure, which we call RPR, for \emph{Restricted P\'osa Rotations}, follows shortly. It takes as an input a graph $G$, a Hamilton path $P$ of $G$, an endpoint $v$ of $P$ and a set $S$ not containing the endpoints of $P$. It then outputs  sets $End$ and $\cP$ such that for each $w\in End$ there exists a Hamilton path in $\cP$ from $v$ to $w$. At the description of RPR $Q$ is a queue.  
\begin{algorithm}[H]
\caption{$RPR(G,P,v,S)$}
\begin{algorithmic}[1]
\\Let $u$ be such that $u,v$ are the endpoints of $P$.
\\ $\cP=\{P\}$,  $End=\{u\}$ and $Q$ be a queue, currently containing only $u$. 
\While{$Q \neq \emptyset$}
\\\hspace{5mm} Let $w$ be the first vertex in $Q$ and $P_w$ be the unique Hamilton path in $\cP$ from $v$ to $w$.
 \For{ $z\in N(w)\setminus S$}
\If{ the P\'osa rotation starting from $P_w$ that fixes $v$ and the inserted edge is $wz$ results to an endpoint which does not belong to $End$}
\\\hspace{15mm} Perform the corresponding P\'osa rotation, add the new endpoint to $End$ and to the end of $Q$ and the corresponding path to $\cP$.
\EndIf 
\EndFor
\\\hspace{5mm} Remove $w$ from $Q$.
\EndWhile
\\ Output $End, \cP$.
\end{algorithmic}
\end{algorithm} 
As every vertex enters at most once the set $Q$, both of the output sets $End$ and $\cP$ have size at most $n$ and the RPR algorithm runs in $O(n^2)$ time. We now present the key lemma related to the RPR algorithm.
\begin{lemma}\label{lem:posa}
Let $G$ be a graph, $S\subset V(G)$, $P$ a Hamiltonian path of $G$ with endpoints $u,v$ and $End, \cP =$RPR$(G,P,v,S)$. Then,
$$|N_G(End)\setminus S| <2|End|.$$
\end{lemma}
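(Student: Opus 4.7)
Proof plan: The engine is an invariant of RPR that holds at termination. Namely, for every $w \in End$ and every $z \in N_G(w) \setminus S$, if we write $\pi_w = v_1, v_2, \ldots, v_n$ with $v_1 = v$, $v_n = w$, and $z = v_i$, then the $w$-side path-neighbor $v_{i+1}$ of $z$ in $\pi_w$ lies in $End$. The reason is that every $w \in End$ is eventually placed in the queue $Q$ and processed; at the moment of processing, the inner for-loop inspects every $z \in N_G(w) \setminus S$ and performs the corresponding P\'osa rotation unless $v_{i+1}$ is already in $End$, in which case $v_{i+1}$ is added. Either way, $v_{i+1} \in End$ when RPR halts. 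Note that if $i = n-1$ this degenerates to $v_{i+1} = w \in End$, so the statement holds trivially in that edge case.

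Using this, I would set up an injection $\iota: N_G(End)\setminus S \hookrightarrow (End \times \{L,R\}) \setminus \{(u,L)\}$. Since $N_G(End)$ excludes $End$ by the paper's convention, every $z \in N_G(End)\setminus S$ satisfies $z \notin End$, $z \notin S$, and has at least one neighbor $w(z)$ in $End \cap N_G(z)$; pick $w(z)$ canonically, say as the first neighbor of $z$ in $End$ processed by the BFS. By the invariant, $y(z) := v_{i+1}^{(w(z))} \in End$, giving the first coordinate of $\iota(z)$. The second coordinate $L/R$ is assigned based on the role of $z$: one value when $z$ is the pivot that first added $y(z)$ to $End$ (so $z$ is an edge in the rotation tree $T$ rooted at $u$ with $y(z)$ as its child), and the other value when $z$ is merely a path-predecessor of $y(z)$ without having caused $y(z)$ to be added.

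Finally, I would verify injectivity of $\iota$ and the exclusion of $(u,L)$: in the rotation tree $T$, each non-root endpoint $y \in End \setminus\{u\}$ has a unique adder $(w, p(y))$, so the "pivot slot" at $y$ contains at most one $z$; the "path-predecessor slot" at $y$ is also constrained to at most one $z$ by the canonical choice of $w(z)$ (any two $z_1 \neq z_2$ with the same successor $y$ via paths $\pi_{w(z_1)}, \pi_{w(z_2)}$ are distinguished by the label, or, in the non-canonical direction, produce the same $z$ hence collapse). Because the root $u$ has no adder, the slot $(u,L)$ is never hit; this yields $|N_G(End)\setminus S| \leq 2|End|-1 < 2|End|$.

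The main obstacle is the second step: making the two-coordinate labeling precise enough that injectivity actually holds, in particular handling the case where $z$ is a pivot for some $y \in End$ but $w(z) \neq y^*$ (i.e., $z$'s canonical parent differs from the parent recorded in the tree $T$). The cleanest way I foresee is to use the BFS order: at the moment $z$ is first considered by RPR, exactly one of two things occurred (the rotation added a new endpoint, or it did not), and this dichotomy furnishes the $L/R$ label unambiguously. The asymmetry at $u$ is then the source of the strict inequality.
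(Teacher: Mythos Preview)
Your plan is correct and is essentially the paper's argument repackaged as an injection: the paper defines, for each $u_i\in End$ (in BFS order), the vertex $r_{i,1}$ preceding $u_i$ on $P_{u_i}$ and, for $i\ge 2$, the vertex $r_{i,2}$ with $r_{i,2}u_i$ the deleted edge when $u_i$ was created, and shows every $z\in N_G(End)\setminus S$ equals some $r_{i,j}$ by choosing the minimal $k$ with $z\in N(u_k)$---exactly your canonical $w(z)$---so your two slots $(y,L)$ and $(y,R)$ are precisely the paper's $r_{\cdot,2}$ and $r_{\cdot,1}$. The obstacle you flag dissolves under this same minimality: since $z$ has no $G$-neighbour among $u_1,\dots,u_{k-1}$ (and distinct pivots from $P_k$ yield distinct endpoints), whenever $y(z)\neq u_k$ the rotation at $z$ from $P_k$ genuinely creates $y(z)$ as a fresh endpoint, so your $L/R$ dichotomy is exhaustive and the injection is well-defined without further case analysis.
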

\begin{proof}
Let $u=u_1,u_2,....,u_\ell$ be the vertices in $End$ in the order that are added to $Q$ and $P_{u_1},P_{u_2},$ $...,P_{u_\ell}$ be the corresponding Hamiltonian paths. Let $T_1=\{u_1,r_{1,1}\}$ and $T_i=\{u_i,r_{i,1},r_{i,2}\}$ for $2\leq i\leq \ell$,  where $r_{i,2}u_i$ is the deleted edge at the P\'osa rotation that results to the Hamilton path $P_{u_i}$ for $2\leq i\leq \ell$ and $r_{i,1}$ is the vertex preceding $u_i$ on $P_{u_i}$  for $1\leq i\leq \ell$.

Let $w \in \cup_{i\in \ell }N(u_i)$ be such that $w\notin S$ and $k$ be minimum such that $w\in N(u_k)$. Let $x$ be the neighbor of $w$ between $w$ and $u_k$ (included) on the path $P_k$ (so $xw\in E(P_k)$). Then either $x=u_k$ or, due to the minimality of $k$, $x\notin \{u_1,u_2,...,u_k\}$. In the first case $w=r_{k,1}$. In the second case, during the $k^{th}$ execution of the while loop of RPR a P\'osa rotation is performed where the inserted edge is the edge $u_kw$ and the new endpoint is $x$. Therefore, in the second case, $x=u_{k'}$ for some $k'>k$ and $w=r_{k',2}$. Thus $N_G(End)\setminus S \subseteq \cup_{i\leq \ell}T_\ell$ and 
$$|N_G(End)\setminus S|\leq 1+2(|End|-1)<2|End|.$$
\end{proof}
The key advantage of performing restricted P\'osa rotations as opposed to normal P\'osa rotations is given by the following observation.

\begin{observation}\label{lem:posaendpoints}
If $S=W\cup N(W)$ and $End,\cP=$RPR$(G,P,v,S)$ then $End$ and $W$ are disjoint. Indeed, let $u,v$ be the endpoints of $P$. Then, $u\notin S$. Thereafter for a vertex $w\in [n]$ only a P\'osa rotation with a pivot vertex in $N(w)\setminus S$ may result to a path with $w$ as an endpoint. As $N(w)\setminus S=\emptyset$ for $w\in W$ we have that no vertex in $W$ ever enters $End$. 
\end{observation}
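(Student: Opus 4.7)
The plan is to trace how vertices enter the set $End$ during the execution of RPR and to argue that any such entry requires a neighbor outside $S$, which excludes members of $W$.

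First I would handle the initial endpoint. The set $End$ is initialized to $\{u\}$ where $u$ is the endpoint of $P$ distinct from the fixed endpoint $v$. By the hypothesis of the observation, $S$ does not contain the endpoints of $P$, so $u\notin S$, and in particular $u\notin W$ because $W\subseteq S=W\cup N(W)$.

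Next I would analyze how any subsequent vertex $x$ can be appended to $End$. Inspecting the while-loop of RPR, a vertex $x$ is added to $End$ only as the ``new endpoint'' of some P\'osa rotation that fixes $v$; in the notation of the algorithm this happens while processing some $w\in Q$, with inserted edge $wz$ for some $z\in N_G(w)\setminus S$. Writing the current path as $v=v_1,v_2,\dots,v_n=w$ and $z=v_i$, the rotation replaces $v_iv_{i+1}$ by $v_nv_i$ and the new endpoint is $x=v_{i+1}$. In particular $x$ is adjacent to the pivot $z$ along the path (so $xz\in E(G)$) and the pivot satisfies $z\notin S$. Thus every $x\in End\setminus\{u\}$ possesses at least one neighbor in $G$ that lies outside $S$.

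Finally I would derive the contradiction. If $x\in W$, then by the definition of the external neighborhood we have $N_G(x)\subseteq W\cup N(W)=S$, so $N_G(x)\setminus S=\emptyset$, contradicting the existence of the pivot $z$ above. Combined with $u\notin W$, this shows $End\cap W=\emptyset$. I do not anticipate any real obstacle here: the argument is a direct bookkeeping exercise on the RPR procedure, and the only subtlety is being careful that the pivot (not the new endpoint itself) is the vertex that the algorithm checks for membership in $S$, which is precisely why restricting the pivots to lie outside $S=W\cup N(W)$ forbids $W$-vertices from ever appearing as endpoints.
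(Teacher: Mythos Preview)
Your proposal is correct and follows essentially the same line as the paper's inline argument: the initial endpoint $u$ lies outside $S$ by the input specification of RPR, and every subsequent vertex added to $End$ is adjacent (via the deleted path edge) to its pivot, which is required to lie outside $S$; hence no vertex of $W$ can appear since all of its neighbors lie in $W\cup N(W)=S$. Your write-up simply spells out in more detail why the pivot is a neighbor of the new endpoint, but the idea is identical.
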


We use RPR as a subroutine of the ReducePaths algorithm stated below. ReducePaths takes as input a graph $G$, an FPP $\cP$ of $G$ and $S\subset V(G)$. It outputs sets $End$, $\{End_u\}_{u\in End}$ and a set of FPPs $\cU$ with the following property. For $u\in End$ and $w\in End_u$ there exists an FPP $\cP_{u,w} \in \cU$  of size at most $|\cP|$ with the property that adding $uw$ to $\cP_{u,w}$ either creates a Hamilton cycle, if $|\cP_{u,w}|=1$,  or joins two paths in $\cP_{u,w}$ and creates a full path packing of size at most  $|\cP|-1$. ReducePaths$(G,\cP,S)$ calls RPR $O(n)$ times, hence its running time is $O(n^3)$.  

\begin{algorithm}[H]
\caption{$ReducePaths(G,\cP,S)$}
\begin{algorithmic}[1]
\\ Let $\cP=\{P_1,P_2,...,P_k\}$. Let $v_i,u_i$  be the endpoints of $P_i$, $1\leq i\leq k$.
\\ Let $R=\{u_iv_{i+1}:1\leq i \leq k-1\}$ and $P=v_1P_1u_1v_2P_2u_2,...,u_{k-1}v_{k}P_{k}u_k$. 
\\  $End, \cP =$RPR$(G\cup R, P, v_1,S)$
\For{$u\in End$} 
\\ \hspace{5mm} Let $P_u$ be the Hamilton path from $v$ to $u$ in $\cP$. 
\\ \hspace{5mm} $End_u, \cP_u =$RPR$(G\cup R, P_u, u,S)$.
\\ \hspace{5mm} For $w\in End_u$ let $P_{u,w} \in \cP_u$ be the $u$-$w$ Hamilton path in $G\cup R$ and let $\cP_{u,w}$ be the full path packing obtained from $P_{u,w}$ by removing the edges in $R$.
\EndFor
\\ Set $\cU=\{\cP_{u,w}:u\in End, w\in End_w\}$
\\ Output $End,\{End_u\}_{u\in End}, \cU$.
\end{algorithmic}
\end{algorithm} 

\subsection{Identifying non-expanding sets}
The FindSparse algorithm takes as  input a graph $G$, a subgraph $G_R$ of $G$, a set of edges $F\subset \binom{n}{2}$ and a subset $S$ of $[n]$. Later, when FindSparse is implemented as a subroutine of CerHAM, the set $F$  consists of the edges that have been queried so far and $E(G_R)$  consists of the edges of $F$ that belong to $G$. FindSparse augments $S$ by recursively adding to it sets that do not (sufficiently) expand in $G_R$ (by adding $Q$ to $S$ we mean replacing $S$ with $S\cup Q$). The variable $j$ and sets $Q_j$  defined in its description should be considered parts of its analysis rather than its description.

\begin{algorithm}[H]
\caption{FindSparse($G,G_R,F,S$)}
\begin{algorithmic}[1]
\\Set $w=1$, $j=1$ and $Q_0=S$.
\While{$w=1$}
\\\hspace{5mm} Set $w=0$.
\\\hspace{5mm} Let $\mathcal{Q}$ be the set of subsets of $V(G)\setminus S$ of size at most $|S|$.
\For{ $Q \in \mathcal{Q}$}
\If{ $w=0$ and $|N_{G_R}(Q)\setminus S|<2|Q|$}
\\ \hspace{15mm} Add $Q$ to $S$ and set $w=1$.
\\ \hspace{15mm} Add to $F$ all the edges incident to $Q$ and query the edges of $G$ that have just been added to $F$. Add any edges of $G$ that have just been revealed  to $G_R$. 
\\ \hspace{15mm} Set $Q_j=S$ and $j=j+1$.
\EndIf
\EndFor
\EndWhile
\\ Return $G_R$, $F$, $S$.
\end{algorithmic}
\end{algorithm}
\begin{observation}\label{obs:runtimeFindSparse}
Let $G$ be such that  $|V(G)|=n$ and $s^*=s^*(G,G_R,F,S)$ be the size of the set $S$ that is returned by  FindSparse($G,G_R,F,S$). If $s^*\leq 0.5n$ then  the running time of  FindSparse is $O((s^*)^3\binom{n}{s^*})$. Indeed if FindSparse outputs a set of size $s^*$ then it updates the set $S$ at most $s^*$ times. After each update, it examines whether there exists a subset of  $[n]\setminus S$ of size at most $|S|\leq s^*$ such that the condition at line 6 is satisfied. There exists at most $\binom{n}{s^*}$ such sets, each examination taking $O((s^*)^2)$ time. Finally, at line 8 it queries at most $ns^*$ edges in total.
\end{observation}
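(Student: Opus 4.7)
The plan is to decompose the running time of FindSparse into three components---(i) the number of outer while-loop iterations, (ii) the time spent enumerating and testing candidate sets $Q$ in one iteration, and (iii) the time spent querying edges after a successful expansion---and show each is $O((s^*)^3\binom{n}{s^*})$.

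First I would bound the number of iterations of the while loop by $s^*+1$. The loop exits precisely when a full scan of $\mathcal{Q}$ leaves the flag $w=0$. Each non-terminal iteration produces at least one nonempty $Q$ with $|N_{G_R}(Q)\setminus S|<2|Q|$ and merges $Q$ into $S$, so $|S|$ strictly increases. Since $|S|$ ends at $s^*$, at most $s^*$ such non-terminal iterations can occur before the final confirming pass.

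Next, in each iteration the set $\mathcal{Q}$ has size at most $\sum_{k=1}^{|S|}\binom{n}{k}\leq\sum_{k=1}^{s^*}\binom{n}{k}$. The assumption $s^*\leq 0.5n$ ensures that $\binom{n}{k}$ is nondecreasing in $k$ throughout $[1,s^*]$, and the ratio $\binom{n}{k}/\binom{n}{k-1}=(n-k+1)/k$ is bounded away from $1$ on any initial segment of $[1,s^*]$, so a geometric-series argument yields $\sum_{k=1}^{s^*}\binom{n}{k}=O(\binom{n}{s^*})$. For each $Q\in\mathcal{Q}$, I would check the condition on line 6 in $O((s^*)^2)$ time as follows: maintain, for each vertex $v$, a dynamically pruned list of its $G_R$-neighbours outside $S$; then walk through these lists for $v\in Q$ while marking distinct external vertices, and abort once either the lists are exhausted or $2|Q|$ marks have been made. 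So each iteration costs $O((s^*)^2\binom{n}{s^*})$.

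Finally, line 8 is invoked at most $s^*$ times in total, and each invocation queries at most $|Q|\cdot n$ edges; since the added sets $Q$ are disjoint and accumulate to a subset of $S$, $\sum|Q|\leq s^*$ and the aggregate query cost is $O(s^*n)$, which is dominated by $O((s^*)^3\binom{n}{s^*})$ because $\binom{n}{s^*}\geq n/s^*$ for $s^*\leq n/2$. Multiplying the $O(s^*)$ outer iterations by the $O((s^*)^2\binom{n}{s^*})$ per-iteration work and adding the query overhead gives the claimed total $O((s^*)^3\binom{n}{s^*})$. The principal subtlety is the bound $\sum_{k=1}^{s^*}\binom{n}{k}=O(\binom{n}{s^*})$, which crucially uses $s^*\leq 0.5n$; without this hypothesis the sum could be as large as $2^n$ and the argument collapses. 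A minor secondary subtlety is arranging the pruned-neighbourhood data structure so that each test runs in $O((s^*)^2)$ rather than $O(ns^*)$, but the maintenance cost of the pruning is absorbed in the $O(s^*n)$ query bound.
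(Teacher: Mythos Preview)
Your proposal is correct and follows the same decomposition as the paper's own argument embedded in the observation: bound the number of updates by $s^*$, bound the number of candidate sets per pass by (essentially) $\binom{n}{s^*}$, charge $O((s^*)^2)$ per test, and absorb the $O(s^*n)$ edge queries. You are in fact more careful than the paper in two places---you justify $\sum_{k\le s^*}\binom{n}{k}=O(\binom{n}{s^*})$ via the hypothesis $s^*\le 0.5n$, and you sketch the pruned-list data structure that makes the $O((s^*)^2)$ per-test bound honest---whereas the paper simply asserts both.
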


\subsection{Covering non-expanding sets}\label{sub:coveradj}
In this subsection, we introduce the CoverAndAdjust algorithm which is the subroutine that we will use to cover non-expanding sets output either by ReducePaths or by FindSparse.  CoverAndAdjust takes as an input a graph $G'$, a set $S \subseteq V(G')$ and a full path packing $\cP$. It then tries to adjust $\cP$ so that it covers $S$ is found in the interior of some path in $\cP$ (recall Figure \ref{fig2}).

\begin{algorithm}[H]
\caption{$CoverAndAdjust(G',S,\cP)$}
\begin{algorithmic}[1]
\\ Let $G_S'$ be the subgraph of $G'$ whose edge set is exactly the set of edges incident to $S$ in $G'$.
\For{ each component  $C$ of $G_S'$}
\\ \hspace{5mm} Let $Q=Q(C,S)$ be a maximum size subset of $S\cap V(C)$ with the property $|N_{G_S'}(Q)|<2|Q|$.
\\ \hspace{5mm} Set $R=(S\cap V(C))\setminus Q$. 
\\ \hspace{5mm} Let $v$ be a dummy vertex and $F_C$  be the graph with vertex set $V(F_C)=Q\cup N_{G_S'}(Q) \cup \{v\}$ and whose edge set  $E(F_C)$ contains every edge of $G'$ incident to  $Q$ and every edge spanned by $ N_{G_S'}(Q) \cup \{v\}$.
\\ \hspace{5mm} Implement the Inclusion-Exclusion HAM algorithm to find a Hamilton cycle $H$ of $F_C$. If no such a cycle exists output ``$G'$ is not Hamiltonian".
\\ \hspace{5mm} Let $R'= N_{G_S'}(R) \setminus (Q\cup N_{G_S'}(Q))$.
\\ \hspace{5mm} Find a maximum matching $M_1$ in $R\times R'$ and let $R_1'$ be the $M_1$-saturated vertices in $R'$.
\\ \hspace{5mm} Find a maximum matching $M_2$ in $R\times (R'\setminus R_1')$.
\\ \hspace{5mm} Remove from $E(H)$ all the edges not incident to $Q$, then add to it $M_1$ and $M_2$  and let $\cP_C'$ be the set of paths induced by $E(H)$ of length at least $1$.
\\ \hspace{5mm} Split the paths in $\cP$ by removing from them the vertices that are incident to $E(H)$ and add $\cP_C'$ to $\cP$. 
\EndFor
\\Output $\cP$. 
\end{algorithmic}
\end{algorithm} 
 If there exists a subset $W$ of $R$ that satisfies $|N_{G_{S}'}(W)\cap R'| < 2|W|$ then $|N_{G_{S}'}(W\cup Q)|<2|W\cup Q|$ contradicting the maximality of $Q$. Therefore $|N_{G_{S}'}(W)\cap R'| \geq 2|W|$ for every $W\subseteq R$. Hall's Theorem implies that both $M_1$ and $M_2$ saturate $R$. To construct the matchings $M_1,M_2$ at lines $8$ and $9$, we may use any augmenting path algorithm that runs in $O(2^{|R|})$ time. 
 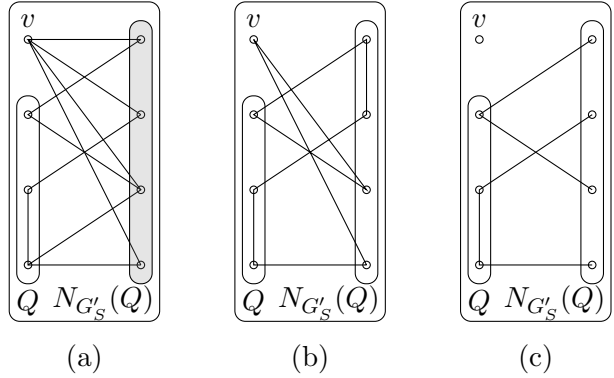
\begin{figure}[htbp]
\begin{center}
\begin{tikzpicture}

\node at (0.75,-1.25) {(a)};

\draw (1.5,0)--(0,0)--(0,1)--(1.5,2);
\draw (0,0)--(1.5,1)--(0,2)--(1.5,3);
\draw (1.5,0)--(0,3)--(1.5,1);
\draw (1.5,2)--(0,3)--(1.5,3);

\draw (0,0) circle [radius=.05];
\draw (0,1) circle [radius=.05];
\draw (0,2) circle [radius=.05];
\draw (0,3) circle [radius=.05];
\node at (0,3.25) {$v$};
\draw (1.5,0) circle [radius=.05];
\draw (1.5,1) circle [radius=.05];
\draw (1.5,2) circle [radius=.05];
\draw (1.5,3) circle [radius=.05];

\draw[rounded corners] (-0.25, -0.75) rectangle (1.75, 3.5) {};
\draw[rounded corners] (-0.15, -0.25) rectangle (0.15, 2.25) {};
\node at (0,-0.5) {$Q$};
\draw[rounded corners, fill=gray, fill opacity=0.2] (1.35, -0.25) rectangle (1.65, 3.25) {};
\node at (1,-0.5) {$N_{G_S'}(Q)$};

\node at (3.75,-1.25) {(b)};

\draw (4.5,0)--(3,0)--(3,1)--(4.5,2)--(4.5,3)--(3,2)--(4.5,1)--(3,3)--(4.5,0);

\draw (3,0) circle [radius=.05];
\draw (3,1) circle [radius=.05];
\draw (3,2) circle [radius=.05];
\draw (3,3) circle [radius=.05];
\node at (3,3.25) {$v$};
\draw (4.5,0) circle [radius=.05];
\draw (4.5,1) circle [radius=.05];
\draw (4.5,2) circle [radius=.05];
\draw (4.5,3) circle [radius=.05];

\draw[rounded corners] (2.75, -0.75) rectangle (4.75, 3.5) {};
\draw[rounded corners] (2.85, -0.25) rectangle (3.15, 2.25) {};
\node at (3,-0.5) {$Q$};
\draw[rounded corners] (4.35, -0.25) rectangle (4.65, 3.25) {};
\node at (4,-0.5) {$N_{G_S'}(Q)$};

\node at (6.75,-1.25) {(c)};

\draw (7.5,0)--(6,0)--(6,1)--(7.5,2);
\draw (7.5,3)--(6,2)--(7.5,1);

\draw (6,0) circle [radius=.05];
\draw (6,1) circle [radius=.05];
\draw (6,2) circle [radius=.05];
\draw (6,3) circle [radius=.05];
\node at (6,3.25) {$v$};
\draw (7.5,0) circle [radius=.05];
\draw (7.5,1) circle [radius=.05];
\draw (7.5,2) circle [radius=.05];
\draw (7.5,3) circle [radius=.05];

\draw[rounded corners] (5.75, -0.75) rectangle (7.75, 3.5) {};
\draw[rounded corners] (5.85, -0.25) rectangle (6.15, 2.25) {};
\node at (6,-0.5) {$Q$};
\draw[rounded corners] (7.35, -0.25) rectangle (7.65, 3.25) {};
\node at (7,-0.5) {$N_{G_S'}(Q)$};

\end{tikzpicture}
\end{center}
\caption{(a)  By placing a complete graph on $N_{G_S'}(Q)$ (the shaded region) one obtains $F_C$. (b) A Hamilton cycle of $F_C$. (c) A set of vertex disjoint paths that covers $Q$. 
}
\label{fig3}
\end{figure}

 The extra vertex $v$ at line $5$ is needed to ensure that the set of edges of $H$ incident to $Q$ does not form a cycle; hence it induces a set of paths in $G'$. To these paths we add the $|R|$ paths induced by $M_1\cup M_2$ to get the paths in $\cP_C'$. The following Lemma is related to line $6$ of CoverAndAdjust. 
\begin{lemma}\label{lem:cover}
Let $G'\subseteq G$ and $S\subset V(G)$ be such that each vertex in $S$ has the same neighborhood in both $G$ and $G'$. If CoverAndAdjust($G',S,\cP$) outputs ``$G'$ is not Hamiltonian" then $G$ is not Hamiltonian.
\end{lemma}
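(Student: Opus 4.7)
The plan is to prove the contrapositive: assume $G$ admits a Hamilton cycle $H$, and show that for every component $C$ of $G_S'$ the auxiliary graph $F_C$ constructed at line~$5$ of CoverAndAdjust is Hamiltonian, so the failure branch at line~$6$ is never triggered.

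Fix such a $C$ and write $Q := Q(C,S)$ and $N := N_{G_S'}(Q)$. The hypothesis that $N_G(s) = N_{G'}(s)$ for every $s \in S$ is used at a single but crucial point: since $Q \subseteq S$, every edge of $H$ incident to $Q$ is an edge of $G'$ incident to $Q$, and therefore belongs to $E(F_C)$. Let $H_Q$ denote the subgraph of $H$ formed by the edges that meet $Q$. Then $H_Q \subseteq F_C$, every $q \in Q$ has degree $2$ in $H_Q$, every vertex of $N$ has degree at most $2$ in $H_Q$, and no vertex outside $Q \cup N$ is touched by $H_Q$.

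Next I will analyse the structure of $H_Q$ and use it to assemble a Hamilton cycle of $F_C$. Since $Q$-vertices have degree $2$ and $N$-vertices at most $2$ in $H_Q$, this subgraph is a vertex-disjoint union of paths $P_1, \ldots, P_m$ (each with both endpoints in $N$ and internally covering part of $Q$) together with possibly a single cycle component. A cycle of $H_Q$ must coincide with $H$ itself, which happens only if $V(G) \subseteq Q \cup N$; I treat this boundary case at the end. In the generic case I construct a Hamilton cycle by arranging $P_1, \sigma_1, P_2, \sigma_2, \ldots, P_m, \sigma_m$ cyclically, where each bridge $\sigma_i$ is a path inside the clique on $N \cup \{v\} \subseteq V(F_C)$ joining the endpoint $y_i \in N$ of $P_i$ to the endpoint $x_{i+1} \in N$ of $P_{i+1}$. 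Because $F_C$ restricted to $N \cup \{v\}$ is complete, any vertex of $N \cup \{v\}$ not already covered by some $P_j$ (in particular the dummy $v$) can be inserted as an interior vertex of a chosen $\sigma_i$, and the concatenation is the required Hamilton cycle of $F_C$.

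The main obstacle is the boundary case $V(G) \subseteq Q \cup N$, where $H_Q = H$ is already a Hamilton cycle of $F_C \setminus \{v\}$. To insert $v$ I must reroute $H$ so that two consecutive positions of the new cycle lie in $N$, since $v$ is only adjacent in $F_C$ to $N$-vertices. My plan is to exploit the maximality of $Q$ --- every $Q^+ \subseteq S \cap V(C)$ with $Q^+ \supsetneq Q$ satisfies $|N_{G_S'}(Q^+)| \geq 2|Q^+|$ --- together with a short P\'osa-type rotation inside the clique on $N \cup \{v\}$ to free two $N$-vertices that can act as the cycle-neighbours of $v$, while preserving every $Q$-incident edge of $H$ so that the degree-$2$ requirement at each $Q$-vertex is maintained. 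Once this rerouting is carried out, the resulting cycle is the desired Hamilton cycle of $F_C$ and the proof of the lemma is complete.
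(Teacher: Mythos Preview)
Your generic-case argument is exactly the paper's: restrict the Hamilton cycle $H$ of $G$ to the edges meeting $Q$, note that these lie in $F_C$ because $Q\subseteq S$ forces $N_G(Q)=N_{G'}(Q)$, obtain vertex-disjoint paths with endpoints in $N$, and stitch them (together with $v$ and any unused $N$-vertices) into a Hamilton cycle of $F_C$ via the clique on $N\cup\{v\}$. The paper stops here, simply asserting that the restricted edges ``induce a path packing'' without discussing the possibility that $H_Q$ is a cycle.

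You correctly flag that possibility, but your resolution of it is only a plan, not a proof --- and in fact no such proof exists in full generality. Take $G=G'$ to be the $4$-cycle $q_1n_1q_2n_2$ with $S=\{q_1,q_2\}$. Then $Q=\{q_1,q_2\}$ (it satisfies $|N_{G_S'}(Q)|=2<4$, while each singleton fails), $N=\{n_1,n_2\}$, and in $F_C$ the three vertices $q_1,q_2,v$ each have degree~$2$ with common neighbourhood $\{n_1,n_2\}$; a Hamilton cycle on five vertices would have to use all six of their incident edges, which is impossible. So $F_C$ is not Hamiltonian although $G$ is, and neither maximality of $Q$ nor any rotation can salvage the argument. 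The lemma as stated is therefore false in this boundary regime; it is only the standing hypothesis $|S|<0.25n$ in the paper's applications (giving $|Q\cup N|<3|S|<n$) that rules the case out. For a complete write-up you should either add that size hypothesis to the lemma or, equivalently, observe at the start that $Q\cup N\subsetneq V(G)$ in the intended setting, which forces $H_Q$ to be a genuine path packing and makes the boundary discussion unnecessary.
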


\begin{proof}
Assume that $G$ has a Hamilton cycle $H$ and let $C$ be an arbitrary component of $G_S'$. Then the edges of $H$ incident to the set $Q=Q(C,S)$ (as it is defined in the description of CoverAndAdjust) induce a path packing $\{P_1,P_2,....,P_k\}$ that covers $Q$ such that the endpoints of $P_i$ lie in $N_{G_S'}(Q)$ for $i\in [k]$. As each vertex in $S$ has the same neighborhood in both $G$ and $G'$
these paths are present in $F_C$. Finally as each pair of vertices in $N_{G_S'}(Q)\cup \{v\}$ forms an edge in $F_C$ the paths $P_1,P_2,....,P_{k}$ can be used to form a Hamilton cycle of $F_C$. Thus CoverAndAdjust($G',S,\cP$) does not output ``$G'$ is not Hamiltonian".
\end{proof}

\begin{observation}\label{obs:coverandadjust}
Let $\cP'$ be the union of  the sets $\cP_C'$ over the components $C$ of $G_S'$.
After the execution of CoverAndAdjust$(G',S,\cP)$ the cardinality of $\cP$ may increase by at most $2|S|$. This is because there are at most $|S|$ paths in $\cP'$ covering $S$. These paths cover in total at most $3|S|$ vertices. Removing these at most $3|S|$ vertices from the paths in $\cP$ increases the cardinality of $\cP$ by at most $3|S|$. We then add to $\cP$ the at paths in $\cP'$, causing $|\cP|$ to decrease by the number of edges spanned by $\cP$. As $\cP'$ covers $S$, there are at least $|S|$ such edges.
\end{observation}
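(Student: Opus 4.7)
The plan is to decompose the change in $|\cP|$ produced by CoverAndAdjust into two pieces: the splitting of the paths of $\cP$ caused by removing the vertices of $V(\cP')$, and the insertion of the edges of $\cP'$. Throughout I write $\cP'=\bigcup_{C}\cP_C'$.

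First I would collect three structural estimates on $\cP'$. The first is $|\cP'|\le |S|$: every path of $\cP_C'$ contains at least one vertex of $S\cap V(C)=Q\cup R$ in its interior, since type-$1$ paths have their interior drawn from $Q$ and each type-$2$ path has the form $r_1'{-}r{-}r_2'$ with $r\in R$, so vertex-disjointness of $\cP'$ forces $|\cP'|\le|S|$. The second is $|V(\cP')|\le 3|S|$: fix a component $C$, and decompose $V(\cP_C')$ into $Q$-vertices (degree $2$ in the retained-$H$ subgraph), ``bridge'' vertices in $N_{G_S'}(Q)$ of retained degree $2$, and endpoints of retained degree $1$; the total number of non-$Q$ vertices used is at most $|N_{G_S'}(Q)|+1\le 2|Q|$ by the maximality condition defining $Q$, so the type-$1$ contribution to $V(\cP_C')$ is at most $3|Q|$, while each type-$2$ path contributes exactly $3$ vertices, giving $|V(\cP_C')|\le 3(|Q|+|R|)=3|S\cap V(C)|$; summing over components yields $|V(\cP')|\le 3|S|$. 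The third is $|E(\cP')|\ge |S|$: since $\cP'$ covers $S$, every $s\in S$ satisfies $\deg_{\cP'}(s)=2$, and the handshake identity gives $2|E(\cP')|\ge\sum_{s\in S}\deg_{\cP'}(s)=2|S|$.

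Now I would combine the three estimates. Splitting $\cP$ by removing the at most $3|S|$ vertices of $V(\cP')$ increases $|\cP|$ by at most $3|S|$, because removing a single vertex from a path packing can create at most one new path (it does so exactly when the vertex is interior to a path, while removing an endpoint or an isolated vertex causes no increase). Introducing the edges of $\cP'$ then merges the fragments: each newly added edge reduces the path count by exactly one, since $\cP'$ is itself a forest and the $V(\cP')$-vertices have been separated, so the second step decreases $|\cP|$ by $|E(\cP')|\ge |S|$. Summing the two signed effects produces a net increase of at most $3|S|-|S|=2|S|$.

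The main technical step I expect is the bound $|V(\cP')|\le 3|S|$: the careful accounting of the ``bridge'' vertices in $N_{G_S'}(Q)$ that survive as interior vertices of some $\cP_C'$-path after deleting the non-$Q$-incident edges of the Hamilton cycle of $F_C$ is where the maximality inequality $|N_{G_S'}(Q)|<2|Q|$ has to be used in a sharp way; all other pieces of the argument are essentially handshake counts.
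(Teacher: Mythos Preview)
Your argument follows the paper's own sketch essentially step for step: establish $|\cP'|\le|S|$, $|V(\cP')|\le 3|S|$, $|E(\cP')|\ge|S|$, and combine. The three structural estimates are correct and proved cleanly.

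The gap is in the combination step, and it is the very same inconsistency present in the paper's terse justification. In your first step you delete the vertices of $V(\cP')$ from the packing; your parenthetical ``removing an endpoint or an isolated vertex causes no increase'' makes clear you are truly deleting them, since otherwise isolating an interior vertex would create \emph{two} new paths, not one. But in your second step the phrase ``the $V(\cP')$-vertices have been separated'' treats those vertices as still present (as singletons), so that each edge of $\cP'$ can merge two of them and drop the count by one. These two readings are incompatible. Under the first reading, Step~2 is really ``add the $|\cP'|$ paths of $\cP'$ back in'', which \emph{increases} $|\cP|$ by $|\cP'|\le|S|$, yielding only $|V(\cP')|+|\cP'|\le 4|S|$. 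Under the second reading, Step~1 already costs up to $2|V(\cP')|$, and again one does not reach $2|S|$. Concretely: take $|S|=1$, $\cP'=\{r_1'\!-\!s\!-\!r_2'\}$ (the $Q=\emptyset$ case), and let $r_1',s,r_2'$ each be interior to a distinct path of the input $\cP$; then line~11 of CoverAndAdjust turns three paths into seven, an increase of $4>2|S|$. So the bound $2|S|$ does not follow from your three estimates alone (and indeed appears not to hold in general); the argument as written --- both yours and the paper's --- gives only $4|S|$, which fortunately is all that is needed for every downstream use of the observation.
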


A bound on the running time of CoverAndAdjust($G',S,\cP$) can be obtained as follows. Let $s$ be the maximum $|C|$ over subsets $C$ of $S$ with the property that the edges incident to $C$ in $G'$ span a connected graph on $C\cup N_{G'}(C)$ (thus $s$ is equal to the maximum number of vertices of $S$ that are contained in a single component of $G_S'$). 
Then for each component $C'$ of $G_S'$ we may identify $Q$ in $O(s2^s)$ time, apply the Inclusion-Exclusion HAM algorithm at line 6 in $O(s^62^{3s})$ time (here we are using  that $|N_{G_S'}(Q)|<2|Q|\leq 2s $) and run the rest of the lines from $3$ to $11$ in $O(2^s)$ time. 
Hence the running time of  CoverAndAdjust($G',S,\cP$) is   $O(ns^62^{3s})$.

\subsection{Chernoff bounds}
At various places we use the Chernoff bounds, stated below, to bound the probability that the binomial random variable $Bin(n,p)$ deviates from its expectation by a multiplicative factor.
\begin{theorem}
For $n\in \mathbb{N}$, $p=p(n)\in (0,1)$ and $\epsilon>0$,
\begin{equation}\label{eq:chernoffupper}
    \Pr(Bin(n,p)\geq (1+\epsilon)np) \leq \exp\{-\epsilon^2np/(2+\epsilon)\} 
\end{equation}
and
\begin{equation}\label{eq:chernofflower}
    \Pr(Bin(n,p)\leq (1-\epsilon)np) \leq \exp\{-\epsilon^2np/2\}.
\end{equation}
\end{theorem}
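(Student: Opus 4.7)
The plan is to use the standard exponential moment (Chernoff) method. Write $X \sim Bin(n,p)$ as $X = \sum_{i=1}^n Y_i$ where the $Y_i$ are i.i.d.\ Bernoulli$(p)$. First I would establish the elementary bound
$$\E[e^{tY_i}] = 1 - p + p e^t \leq \exp(p(e^t - 1))$$
for all real $t$, using $1+x \leq e^x$. By independence, this immediately gives $\E[e^{tX}] \leq \exp(np(e^t - 1))$.

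For the upper tail \eqref{eq:chernoffupper}, I would fix $t > 0$ and apply Markov's inequality to $e^{tX}$:
$$\Pr(X \geq (1+\epsilon)np) \leq e^{-t(1+\epsilon)np}\,\E[e^{tX}] \leq \exp\bigl(np(e^t-1) - t(1+\epsilon)np\bigr).$$
Differentiating the exponent in $t$ shows the optimum is at $t = \ln(1+\epsilon)$, which reduces the bound to $\exp\bigl(-np[(1+\epsilon)\ln(1+\epsilon) - \epsilon]\bigr)$. To match the stated form, the remaining step is the purely analytic inequality
$$(1+\epsilon)\ln(1+\epsilon) - \epsilon \;\geq\; \frac{\epsilon^2}{2+\epsilon} \qquad (\epsilon > 0),$$
which I would verify by setting $g(\epsilon) = (2+\epsilon)\bigl[(1+\epsilon)\ln(1+\epsilon) - \epsilon\bigr] - \epsilon^2$, noting $g(0)=g'(0)=0$, and checking $g''(\epsilon) \geq 0$ (equivalently by a term-by-term comparison with the Taylor series of $(1+\epsilon)\ln(1+\epsilon)$).

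For the lower tail \eqref{eq:chernofflower} the same argument runs with $t < 0$: Markov gives $\Pr(X \leq (1-\epsilon)np) \leq \exp(np(e^t-1) - t(1-\epsilon)np)$, optimized at $t = \ln(1-\epsilon)$, yielding the exponent $-np[(1-\epsilon)\ln(1-\epsilon) + \epsilon]$. The remaining bound $(1-\epsilon)\ln(1-\epsilon) + \epsilon \geq \epsilon^2/2$ on $(0,1)$ is cleanest from the series identity
$$(1-\epsilon)\ln(1-\epsilon) + \epsilon = \sum_{k \geq 2} \frac{\epsilon^k}{k(k-1)} \geq \frac{\epsilon^2}{2}.$$

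The probabilistic content is entirely standard; the only mildly delicate point is the analytic inequality on the upper tail side, where the factor $2+\epsilon$ in the denominator (rather than $2$) is needed precisely to accommodate large $\epsilon$. Everything else is mechanical.
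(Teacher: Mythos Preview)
Your proof is correct and is the standard exponential-moment derivation of the Chernoff bounds; the analytic inequalities you isolate on each tail are exactly the ones needed, and your verifications of them are fine. Note that the paper does not actually prove this theorem---it is stated in Section~2.5 as a known tool and used without proof---so there is no alternative argument in the paper to compare against.
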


\section{A randomized algorithm for certifying Hamiltonicity}\label{sec:RcertifyHam}

RCerHAM starts by generating $G'\sim G(n,0.5)$, letting $F_0$ be the edges of $G'$ that are present in $G$ and setting $G_1=([n],F_0)$.
Then, it runs CerHAM($G,F_0$). The variables $i,j,X_i$ and $Y_j$ defined in the description of CerHAM should be considered parts of its analysis rather than its description.

\begin{algorithm}[H]
\caption{ CerHAM($G,F_0$)}
\begin{algorithmic}[1]
\\ Set $count=0$, $i=j=1$, $S_0=\emptyset$, $F=F_0$ and $\cP=[n]$.
\\ Query the edges in $F$ and set $G_R=([n], E(G)\cap F)$.
\While{ $|S_{i-1}|<0.25n$ and $count< 0.01n^2$}
\\ \hspace{5mm} $End,\{End_u\}_{u\in End}, \cU=$ ReducePaths$(G,\cP,S_{i-1}\cup N_{G_R}(S_{i-1}))$.
\hspace{5mm} \If{ there exists $W_i \in \{End_u:u\in End\}\cup \{End\}$ such that $|W_i|< 0.25n$}
\\\hspace{10mm} Set $S_i'=S_{i-1}\cup W_i$.
\\\hspace{10mm} $G_R$, $F$, $S_i$ = FindSparse($G,G_R,F,S_i'$).
\\ \hspace{10mm} Execute CoverAndAdjust$(G_R,S_i,\cP)$. If it outputs ``$G_R$ is not Hamiltonian" then output ``$G$ is not Hamiltonian"; else set $\cP'=\cP$ and $\cP=$ CoverAndAdjust$(G_R,S_i,\cP')$.
\\ \hspace{10mm} Set   $Y_i=|\cP|-|\cP'|$ and $i=i+1$.
\Else{ }
\If{ there exists $u\in End$ and $w\in End_u$ such that $uw\in F$ and $uw \in G_R$}
\\ \hspace{15mm} Set $E'=\{uw\}$.
\ElsIf{ there exists $u\in End$ and $w\in End_u$ such that $uw\notin F$}
\\ \hspace{15mm} Add $uw$ to $F$ and query whether $uw\in E(G)$. If $uw$ belongs to $E(G)$ then add $uw$ to $G_R$, set $X_j=1$ and $E'=\{uw\}$; else set $X_j=0$ and $E'=\emptyset$.
\\ \hspace{15mm} Set $count=count+1$ and $j=j+1$.
\Else{ go to line 22.}
\EndIf
\If{ $E' \neq \emptyset$ } let $\{uw\}=E'$ and $\cP_{u,w}\in \cU$ be such that none of $u,w$ lies in the interior of a path in $\cP_{u,w}$. Add to the graph spanned by the paths in $\cP_{u,w}$ the edge $uw$. If it spans a Hamilton cycle output ``$G$ is Hamiltonian". Else let $\cP$ be the induced full path-packing. 
\EndIf
\EndIf
\EndWhile
\\Execute the Inclusion-Exclusion HAM algorithm with input the graph $G$.
\end{algorithmic}
\end{algorithm}
Observe that if  CerHAM($G,F_0$) outputs ``$G$ is not Hamiltonian" at line $8$ then this statement is indeed true due to Lemma \ref{lem:cover}. Here we are using that due to line $8$ of FindSparse we have revealed all the edges in $G$ incident to $S_i$, hence the edge sets incident to $S_i$ in $G_R$ and $G$ are identical. The following definition is related to line $16$ of CerHAM.
\begin{notation}
We say that a pair $(G,F_0)$ has the property $\cR$, equivalently $(G,F_0)\in \cR$, if $G$ is a graph, $F_0$ is an edge set and every set $S\subseteq V(G)$ is incident to at most $0.11|S|+n^{7/4}$ edges in $F_0\setminus E(G)$. 
\end{notation}
\begin{lemma}\label{lem:failure}
If $(G,F_0)\in \cR$ then CerHAM($G,F_0$) never executes line 22.
\end{lemma}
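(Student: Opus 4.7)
The plan is to derive a contradiction by showing that, under property $\cR$, the conditions required for the ``go to line 22'' branch at line 16 simply cannot be met.

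First, I would extract the structural facts that hold at the iteration that would trigger line 16. Being inside the else branch of line 5 means every set in $\{End\}\cup\{End_u:u\in End\}$ has size at least $0.25n$, and the failure of both the \textbf{if} at line 11 and the \textbf{elif} at line 13 forces, for every pair $(u,w)$ with $u\in End$ and $w\in End_u$, the conclusion $uw\in F$ together with $uw\notin G_R$, that is, $uw\in F\setminus E(G)$. Since an unordered edge is charged at most twice in the sum $\sum_{u\in End}|End_u|$, this yields at least
\[
\tfrac{1}{2}\sum_{u\in End}|End_u|\ \ge\ \tfrac{1}{2}\cdot 0.25n\cdot 0.25n\ =\ \tfrac{n^2}{32}
\]
edges of $F\setminus E(G)$ incident to $End$.

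Next I would classify the edges of the current $F$ by how they entered it: (i) those present in the initial set $F_0$; (ii) those added in prior iterations by FindSparse at line 7; (iii) those added one-at-a-time at line 14, of which there are at most $count<0.01n^2$ because the while loop is still running. Every edge added by a prior FindSparse invocation is incident to some $Q_j\subseteq S_{i-1}$. By Observation \ref{lem:posaendpoints}, the sets $End$ and each $End_u$ produced by the two RPR calls inside ReducePaths are disjoint from $S_{i-1}$, so no edge joining $End$ to $\bigcup_{u\in End} End_u$ can come from source (ii). Hence all of the $\ge n^2/32$ edges above lie in sources (i) and (iii), so at least
\[
\tfrac{n^2}{32}-0.01n^2\ \ge\ 0.02\,n^2
\]
of them belong to $F_0\setminus E(G)$ and are incident to $End$.

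Applying property $\cR$ to $S=End$, however, we have that at most $0.11|End|+n^{7/4}\le 0.11n+n^{7/4}$ edges of $F_0\setminus E(G)$ are incident to $End$. Since $0.11n+n^{7/4}=o(n^2)$, this contradicts the lower bound $0.02\,n^2$ for all sufficiently large $n$, and the lemma follows.

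The main obstacle is the disjointness claim used in step (ii): verifying that the endpoints produced by the nested RPR calls inside ReducePaths really do avoid $S_{i-1}$, even though RPR operates on the augmented graph $G\cup R$ rather than on $G$ or $G_R$. This requires two ingredients that the algorithm is carefully designed to provide. First, FindSparse at line 7 has already queried every $G$-edge incident to $S_{i-1}$, so $S_{i-1}\cup N_{G_R}(S_{i-1})=S_{i-1}\cup N_G(S_{i-1})$, which is the hypothesis that lets us invoke Observation \ref{lem:posaendpoints} with $W=S_{i-1}$. Second, CoverAndAdjust maintains the invariant that endpoints of paths in $\cP$ lie outside $S_{i-1}$, so the joining edges in $R$ do not supply extra routes through which a vertex of $S_{i-1}$ could infiltrate $End$ or any $End_u$. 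Once these two points are confirmed, the counting argument above goes through verbatim.
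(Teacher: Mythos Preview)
Your argument is correct and follows essentially the same route as the paper: both proofs show that when the else-branch at line 5 is entered, the number of ordered pairs $(u,w)$ with $u\in End$, $w\in End_u$ and $uw\notin F$ must be positive, by lower-bounding the total count by $0.25n|End|$, subtracting at most $2\,count$ for edges in $F\setminus F_0$ not touching $S_{i-1}$, and invoking property~$\cR$ to bound the contribution of $F_0\setminus E(G)$. The only cosmetic differences are that the paper counts ordered pairs directly while you halve to unordered edges, and that you make explicit the two ingredients (all $G$-edges at $S_{i-1}$ are already revealed, and the joining edges $R$ avoid $S_{i-1}$ because CoverAndAdjust covers $S_{i-1}$) that justify invoking Observation~\ref{lem:posaendpoints} for the RPR calls on $G\cup R$---a subtlety the paper's proof leaves implicit.
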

\begin{proof}
Let $(G,F_0)\in \cR$. We will prove that throughout the execution of CerHAM($G,F_0$) if the line $13$ is executed then  
the set $\{uw:u\in End, w\in End_u \text{ and } uw \notin F\}$ is not empty, thus the algorithm never executes line 16. For that consider a moment when CerHAM($G,F_0$) proceeds to line $13$.  Observation \ref{lem:posaendpoints} implies that at each iteration of the while-loop at line $3$ the sets $End,\{End\}_{u\in End}$ are disjoint from $S_{i-1}$. In addition observe that $count$ is an upper bound on the number of edges in $F \setminus F_0$  not incident to $S_{i-1}$.  Thus, if the algorithm proceeds to line $13$, then due to line $5$, we have that every set in $\{End_u:u\in End\}\cup \{End\}$ has size at least $0.25n$. In addition due to line $11$, as $F\cap E(G)=F\cap E(G_R)$,  we have that 
\begin{equation*}
|\{(u,w):u\in End, w\in End_u \text{ and }uw \in F\cap E(G)\}|=0.
\end{equation*}
In addition, $(G,F_0)\in \cR$ and therefore
\begin{align*}
    &|\{(u,w):u\in End, w\in End_u \text{ and } uw \in F_0 \setminus E(G)\}|\leq 0.11n|End|+n^{7/4}.
\end{align*}
Hence, as $F_0\subseteq F$, we have 
\begin{align*}
&|\{(u,w):u\in End, w\in End_u \text{ and } uw \notin F\}|
\\&= |\{(u,w):u\in End, w\in End_u \}|
\\&- |\{(u,w):u\in End, w\in End_u \text{ and } uw \in F \setminus F_0\}|
\\&- |\{(u,w):u\in End, w\in End_u \text{ and } uw \in F_0 \setminus E(G)\}| 
\\&-|\{(u,w):u\in End, w\in End_u \text{ and }uw \in F_0\cap E(G)\}| 
\\& \geq 0.25n|End|-2count - 0.11n|End|-n^{7/4}-0
\\& \geq 0.14n|End| -0.021n^2\geq 0.14\cdot 0.25 n^2 -0.021n^2\geq 1.
\end{align*}
\end{proof}

The following lemma upper bounds the number of edges in $G$ we need to identify at line 15. We later use Lemma \ref{lem:auxrandupper} to upper bound the probability that $count \geq 0.01n^2$ at the end of the main while-loop of  CerHAM($G,F_0$). Given a set of edges $F_0$  and a graph $G$ on $[n]$ define the event $\cE_{exp}'(G,F_0)$ as follows. $$\cE_{exp}'(G,F_0)=\set{\exists S\subset [n]: |S|\in [0.02n, 0.27n]\text{ and }|N_{G \cap G_{F_0} }(S)|<2|S|},$$
where $G_{F_0}=([n],F_0)$.
\begin{lemma}\label{lem:auxrandupper}
Let $G\sim G(n,p)$ and $F_0$ be an edge set on $[n]$ with the property $(G,F_0)\in \cR$. In addition let $G_1=([n],F_0\cap E(G))$. Let $t$ be the largest $i$ such that the set $S_i$ is defined by CerHAM($G,F_0$) and $S_0,S_1,....,S_t$ be the corresponding sets. Then, $|N_{G_1}(S_t)|< 2|S_t|.$ In addition, if the event $\cE_{exp}'(G,F_0)$ does not occur then 
$|S_t|\leq 0.02n$. Furthermore, 
\begin{equation}\label{eq:trand}
t\leq \log_2\bfrac{|S_t|}{|S_1|}+1.    
\end{equation}
Finally, if CerHAM($G,F_0$) exits the while-loop and $|S_t|<0.25 n$  then, 
\begin{equation}\label{eq:probrand}
    \sum_{i=1}^{0.01n^2}X_i < 4|S_t|+n.
\end{equation}
\end{lemma}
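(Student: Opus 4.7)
The plan is to prove the four assertions in sequence, with the common thread being an inductive tracking of the set $S$ as it grows one augmentation at a time.

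For $|N_{G_1}(S_t)|<2|S_t|$, I would carry the invariant $|N_{G_1}(S)\setminus S|<2|S|$ through every augmentation of $S$, together with the auxiliary invariant $|N_{G_R^i}(S_{i-1})\setminus S_{i-1}|<2|S_{i-1}|$ at the start of each CerHAM iteration. Since $G_1\subseteq G_R$ at all times (the only edges ever added to $G_R$ are real $G$-edges, while $G_1$ is the initial $G_R$), every $Q$ added inside FindSparse automatically satisfies $|N_{G_1}(Q)\setminus S|\le |N_{G_R}(Q)\setminus S|<2|Q|$ by the line~6 condition, and the invariant is preserved by a standard union-bound. For a $W_i$-augmentation, Lemma~\ref{lem:posa} provides $|N_G(W_i)\setminus(S_{i-1}\cup N_{G_R^i}(S_{i-1}))|<2|W_i|$; decomposing $N_{G_1}(W_i)\setminus S_{i-1}$ into the part outside $N_{G_R^i}(S_{i-1})$ (bounded by Lemma~\ref{lem:posa}) and the part inside it (absorbed into the pre-existing boundary counted by the auxiliary invariant) preserves the invariant at $S_i'=S_{i-1}\cup W_i$.

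For $|S_t|\le 0.02n$ under $\neg\cE_{exp}'(G,F_0)$, I would first note that every single augmentation at most doubles $|S|$: FindSparse enforces $|Q|\le|S|$, and in an iteration-start augmentation $|W_i|<0.25n$ combined with the loop guard $|S_{i-1}|<0.25n$ gives $|S_i'|<0.5n$. If $|S_t|>0.27n$ then the first intermediate state $S^{(k)}$ with $|S^{(k)}|\ge 0.02n$ has size within $[0.02n,0.27n]$, and the intermediate form of the first assertion applied to $S^{(k)}$ directly exhibits a witness for $\cE_{exp}'(G,F_0)$, contradicting our hypothesis, so $|S_t|\le 0.02n$.

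For $t\le\log_2(|S_t|/|S_1|)+1$, I would prove the doubling $|S_i|\ge 2|S_{i-1}|$ for every $i\ge 2$, from which the bound telescopes. At the end of iteration $i-1$, FindSparse's termination guarantees that no $W\subseteq V(G)\setminus S_{i-1}$ with $|W|\le|S_{i-1}|$ satisfies $|N_{G_R^i}(W)\setminus S_{i-1}|<2|W|$. Assuming for contradiction $|W_i|\le|S_{i-1}|$, I would combine Lemma~\ref{lem:posa} for $W_i$ with the auxiliary invariant on $|N_{G_R^i}(S_{i-1})\setminus S_{i-1}|$, if necessary closing $W_i$ under those of its $G_R^i$-neighbors that also lie in $N_{G_R^i}(S_{i-1})$, to produce a non-expanding witness of size $\le |S_{i-1}|$ that FindSparse should already have caught. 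This forces $|W_i|>|S_{i-1}|$, hence $|S_i|\ge|S_i'|=|S_{i-1}|+|W_i|>2|S_{i-1}|$.

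Finally, for $\sum_{j=1}^{0.01n^2}X_j<4|S_t|+n$, I would track $|\cP|$ over the whole execution of the while-loop. Each successful booster (i.e.\ every $j$ with $X_j=1$, which results in $E'\ne\emptyset$ at line~19 and hence either a Hamilton cycle or a strict decrease) decreases $|\cP|$ by at least $1$, while each call to CoverAndAdjust in iteration $i$ increases $|\cP|$ by at most $2|S_i|$ by Observation~\ref{obs:coverandadjust}. Since $|\cP|$ starts at $n$ and is nonnegative at exit, the number of successful boosters is at most $n+\sum_{i=1}^t 2|S_i|$; the doubling $|S_i|\ge 2|S_{i-1}|$ bounds the geometric sum by $2|S_t|$, giving $\sum X_j<n+4|S_t|$. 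The main obstacle throughout is the interplay between the three neighborhood notions in play --- the static $N_{G_1}$, the growing $N_{G_R}$, and $N_G$ (entering only through Lemma~\ref{lem:posa} relative to the augmented $S_{i-1}\cup N_{G_R^i}(S_{i-1})$) --- and the fine-grained bookkeeping of which edges are revealed at which stage, which the auxiliary invariant is designed to mediate.
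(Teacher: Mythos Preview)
Your plan follows the paper's proof in structure and in all four key steps: carry the non-expansion invariant through every augmentation of $S$, locate an intermediate set in $[0.02n,0.27n]$ that would witness $\cE_{exp}'$, deduce $|S_{i+1}|>2|S_i|$ from FindSparse's termination together with Lemma~\ref{lem:posa}, and bound $\sum X_j$ by tracking $|\cP|$ via Observation~\ref{obs:coverandadjust} and a geometric sum.

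Two minor slips. In the second step your contradiction hypothesis should be $|S_t|>0.02n$ rather than $>0.27n$ (as written, the case $|S_t|\in(0.02n,0.27n]$ is left open, though it is handled trivially by taking $S^{(k)}=S_t$ itself); and ``every single augmentation at most doubles $|S|$'' is false for $W_i$-steps --- the bound you actually need is $|W_i|<0.25n$, which together with $|S^{(k-1)}|<0.02n$ gives $|S^{(k)}|<0.27n$, exactly as in the paper's induction on the full sequence $Q_0,\ldots,Q_r$ of intermediate states.

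For the third step the paper is more direct than your proposal: it simply reads off $|N_{G_R}(W_{i+1})\setminus S_i|<2|W_{i+1}|$ from Lemma~\ref{lem:posa} and contradicts FindSparse's exit condition with $Q=W_{i+1}$, using no auxiliary invariant on $N_{G_R}(S_{i-1})$ and no closure of $W_i$. Your closure operation (adding to $W_i$ its $G_R$-neighbors inside $N_{G_R}(S_{i-1})$) could push the candidate past size $|S_{i-1}|$, so it is not obviously a valid witness for FindSparse; in any case that extra machinery is not needed.
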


\begin{proof}
Let $\emptyset=Q_0,Q_1,....,Q_r=S_t$ be the sequence of sets generated by  CerHAM($G,F_0$), i.e. for $1\leq i\leq r$ either (i) $Q_{i-1}=S_j$, $Q_i\setminus Q_{i-1}=W_{j+1}$ and $Q_i=S_{j+1}'$ for some $1\leq j+1\leq t$ or (ii) $Q=Q_i\setminus Q_{i-1}$ is a subset of $[n]\setminus Q_{i-1}$ of size at most $|Q_{i-1}|$ such that $|N_{G_R}(Q)\setminus Q_{i-1}|<2|Q|$ for some graph $G_R$ with $G_1\subseteq G_R\subseteq G$ (i.e the set $Q$ satisfied line $6$ of FindSparse at some point during the execution of  CerHAM($G,F_0$)). Lemma \ref{lem:posa} implies that in the first case, and therefore in both cases, $|N_{G_1}(Q_i\setminus Q_{i-1})\setminus Q_{i-1}|<2|Q_i\setminus Q_{i-1}|$ and $|Q_i\setminus Q_{i-1}|\leq \max\{|Q_{i-1}|,0.25n-1\}$.

We first show by induction that $|N_{G_1}(Q_i)|<2|Q_i|$ for $1\leq i\leq r$. Taking $i=r$ yields that $|N_{G_1}(S_t)|< 2|S_t|$. The base case holds as $Q_1\setminus Q_0=Q_1=W_1$. Assume that $|N_{G_1}(Q_i)|<2|Q_i|$ for some $1\leq i< r$. Then,
\begin{align*}
    |N_{G_1}(Q_{i+1})|&\leq |N_{G_1}(Q_{i+1}\setminus Q_{i})\setminus Q_i|+ |N_{G_1}(Q_{i}|
<2|Q_{i+1}\setminus Q_{i}|+2|Q_i|\leq 2|Q_{i+1}|,
\end{align*}
completing the induction.

Now assume that the event $\cE_{exp}'(G,F_0)$ does not occur. We will show that $|Q_i|\leq 0.02n$ for $0\leq i \leq r$ by induction. Taking $i=r$ yields that $|S_t|< 0.02n$. The base case holds as $Q_0=\emptyset$. Assume that $Q_i<0.02n$ for some $0\leq i< r$. Then,
\begin{align*}
    |Q_{i+1}|&= |Q_{i+1}\setminus Q_{i}|+ |Q_{i}|
 \leq \max\{|Q_{i}|,0.25n-1\} +|Q_i|
< 0.25n-1+0.02n< 0.27n.
\end{align*}
As $|N_{G_1}(Q_{i+1})|< 2|Q_{i+1}|$ and the event $\cE_{exp}'(G,F_0)$ does not occur we have that $|Q_{i+1}|<0.02n$.

Now we will show that $2|S_i|<|S_{i+1}|$ for $0\leq i<t$.  
For that observe that once FindSparse($G,G_R,F,$ $S_i'$) terminates there does not exists a subset $W$ of $[n]\setminus S_i$ of size at most $|S_i|$ with the property $|N_{G_R}(W)\setminus S_i|<2|W|$. On the other hand Lemma \ref{lem:posa} implies that $|N_{G_R}(W_{i+1})\setminus S_i|<2|W_{i+1}|$. Therefore, $|W_{i+1}|>|S_i|$ and
$$|S_{i+1}|\geq |S_{i+1}'|=|S_i|+|W_{i+1}|>|S_i|+|S_i|=2|S_i|,$$
for $1\leq i\leq t$.
$2|S_i|<|S_{i+1}|$ for $0\leq i<t$ implies that 
$|S_t|\geq 2^{t-1}|S_1|$ and therefore \eqref{eq:trand} holds.

Finally for \eqref{eq:probrand} observe that if  CerHAM($G,F_0$) exits the while-loop and $|S|<0.25n$ then  line 15 has been executed $0.01n^2$ times. Say $|\cP|=0$ if a Hamilton cycle has been constructed. The $i^{th}$ time line 8 is executed the number of paths in $\cP$ is increased by $|Y_i|$.  Observation \ref{obs:coverandadjust} implies that $|Y_i|\leq 2|S_i|$. On the other hand  $X_j=1$ implies a decrease in $|\cP|$ after the $j^{th}$ time line 15 is executed.  
If  CerHAM($G,F_0$) exits the while-loop, then $|\cP|>0$ throughout the algorithm and as initially $|\cP|=n$ we have,
\begin{align*}
    0&< n+\sum_{i=1}^tY_i- \sum_{j=1}^{0.01n^2}X_i
    \leq  n +2 \sum_{i=1}^t |S_i| - \sum_{i=1}^{0.01n^2}X_i
    \\&\leq n+2 \sum_{i=1}^t2^{-(t-i)}|S_t| - \sum_{i=1}^{0.01n^2}X_i \leq n+4|S_t| - \sum_{i=1}^{0.01n^2}X_i.
\end{align*}
At the second line of the calculations above we used that $2|S_i|<|S_{i+1}|$ for $0\leq i<t$.
\end{proof}

\subsection{Analysis of CerHAM in the randomised setting - the dense regime}
For the analysis of CerHAM($G,F_0$) in addition to the event $\cE_{exp}'(G,F_0)$ we consider the events $\cE_{count}(G,F_0)$ and $\cA_i'(G,F_0)$, $i\leq 0.02n$.
$\cE_{count}(G,F_0)$ is the event that $count$ reaches  the value of $0.01n^2$ at the execution of CerHAM($G,F_0$)
and  $\cA_i'(G,F_0)$ is the event that $\exists S\subset [n]$ such that $|S|=i$ and 
 $|N_{ F\cap G_{F_0} }(S)|<2|S|$.
 
For a graph $G$ on $[n]$ and an edge set $F_0$ on $[n]$ we let $T(G,F_0)$ be the running time of CerHAM$(G,F_0)$. In addition, we let $T_A(G,F_0)$ be the running time of CerHAM in the event it does not execute line $6$ (thus it does not execute the subroutines FindSparse and CoverAndAdjust at all) and $T_B=T-T_A(G,F_0)$.
\begin{lemma}\label{lem:upperT}
Let $G$ be a graph on $[n]$ and $F_0$ be an edge set on $[n]$ with the property $(G,F_0)\in \cR$.  Then $$T_A(G,F_0)=O(n^7)$$ and
\begin{align}\label{eq:upperTB}
\mathbb{E}(T_B(G,F_0))=  O\bigg(n^72^n&\Pr(\cE_{exp}'(G,F_0)\lor  \cE_{count}(G,F_0)) \nonumber\\&+\sum_{j=0}^{0.02n}\bigg[ j^4 \binom{n}{j} + j^7 2^{3j}n\bigg]
\Pr(\cA_j'(F_0))\bigg).
\end{align}
\end{lemma}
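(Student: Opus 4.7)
The plan is to establish the two bounds separately. First I would bound $T_A$ by tracing the work done in CerHAM when line 6 (equivalently, the entire if-branch of line 5) is skipped. The initial edge querying at line 2 costs $O(|F_0|) = O(n^2)$. Each iteration of the while-loop then calls ReducePaths (which runs in $O(n^3)$ time) and performs at most $O(n^2)$ bookkeeping in the else branch of lines 10--21. Lemma \ref{lem:failure} together with the hypothesis $(G,F_0)\in\cR$ guarantees that the algorithm never jumps to line 22, so every iteration either queries a fresh edge and increments $count$ (at most $0.01n^2$ such iterations, by the guard at line 3) or exploits an already-queried edge as a booster, strictly reducing $|\cP|$ (at most $n$ such iterations, since $|\cP|$ starts at $n$ and cannot be negative). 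Hence the number of iterations is $O(n^2)$ and $T_A\leq O(n^2)\cdot O(n^3)=O(n^5)=O(n^7)$.

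For $\mathbb{E}(T_B)$, the plan is to decompose $T_B$ into the cost of the line 6 executions (the subsequent calls to FindSparse and CoverAndAdjust) and the cost of the Inclusion-Exclusion algorithm at line 23. The Inclusion-Exclusion step costs $O(n^6 2^n)$ in the worst case and is invoked only when the guard at line 3 fails. By Lemma \ref{lem:auxrandupper}, in the complement of $\cE_{exp}'(G,F_0)$ we have $|S_t|\leq 0.02n$, so the exit condition $|S_{i-1}|\geq 0.25n$ forces $\cE_{exp}'(G,F_0)$. The other exit condition $count\geq 0.01n^2$ is exactly $\cE_{count}(G,F_0)$. Therefore the expected Inclusion-Exclusion cost is at most $O(n^7 2^n)\Pr(\cE_{exp}'(G,F_0)\lor\cE_{count}(G,F_0))$.

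For the line 6 contribution, I would note that each execution at iteration $i$ costs $O(|S_i|^3\binom{n}{|S_i|})$ for FindSparse (Observation \ref{obs:runtimeFindSparse}) and $O(n|S_i|^6 2^{3|S_i|})$ for CoverAndAdjust. Conditioning on the complement of $\cE_{exp}'(G,F_0)\lor\cE_{count}(G,F_0)$, Lemma \ref{lem:auxrandupper} yields $|S_t|\leq 0.02n$ together with the doubling property $|S_{i+1}|>2|S_i|$, so the number $t$ of line 6 executions satisfies $t\leq\log_2|S_t|+1$ and the sizes $|S_i|$ form a strictly geometric sequence dominated by $|S_t|$. Summing over $i$, the total line 6 work when $|S_t|=j$ is at most $t\cdot (j^3\binom{n}{j}+n j^6 2^{3j})\leq j^4\binom{n}{j}+n j^7 2^{3j}$. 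Since $|S_t|=j$ entails the existence of a set of size $j$ that is non-expanding in $G_1=([n],F_0\cap E(G))$, i.e., the event $\cA_j'(F_0)$, the expected line 6 cost on this good event is at most $\sum_{j=0}^{0.02n}[j^4\binom{n}{j}+n j^7 2^{3j}]\Pr(\cA_j'(F_0))$. On the bad event $\cE_{exp}'(G,F_0)\lor\cE_{count}(G,F_0)$ the line 6 work is crudely at most $O(n^7 2^n)$ (using $|S_i|<0.25n$ and the same doubling to bound $t$), which absorbs into the Inclusion-Exclusion term.

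The main obstacle I anticipate is the clean separation between the good regime (where doubling and $|S_t|\leq 0.02n$ collapse the sum over line 6 executions into a controlled expression in $|S_t|$, indexed by $\cA_j'(F_0)$) and the bad regime (where both the line 6 work and the Inclusion-Exclusion cost must be absorbed into the single large error term $O(n^7 2^n)\Pr(\cE_{exp}'\lor\cE_{count})$). A secondary technical point is verifying that the doubling inequality $2|S_i|<|S_{i+1}|$ extracted in the proof of Lemma \ref{lem:auxrandupper} is valid for each consecutive pair of sets output at line 7, so that the geometric-series collapse goes through and the extra factor of $j$ (accounting for $t\leq\log_2 j+1$ iterations) is more than sufficient.
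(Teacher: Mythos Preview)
Your proposal is correct and takes essentially the same route as the paper: bound $T_A$ by (number of while-loop iterations) $\times\, O(n^3)$, and bound $\E[T_B]$ by splitting on $\cE_{exp}'\lor\cE_{count}$, using the worst-case $O(n^72^n)$ on the bad event and, on the good event, conditioning on $|S_t|=j\leq 0.02n$ (which forces $\cA_j'$ via the first assertion of Lemma~\ref{lem:auxrandupper}) with at most $t\leq j$ executions of lines 6--9 each costing $O\big(j^3\binom{n}{j}+nj^62^{3j}\big)$.

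One small gap in your $T_A$ count: you assert that every iteration either increments $count$ or strictly reduces $|\cP|$, but this overlooks the iterations in which the condition at line~5 holds. In those iterations line~4 still runs (and its cost still contributes to $T_A$), while $|\cP|$ may \emph{increase} through CoverAndAdjust; hence your claim ``at most $n$ booster-exploiting iterations, since $|\cP|$ starts at $n$ and cannot go negative'' is not valid once such increases are permitted. The repair is immediate: by \eqref{eq:trand} there are at most $t\leq\log_2 n$ iterations with line~5 true, and by Observation~\ref{obs:coverandadjust} together with the doubling $2|S_i|<|S_{i+1}|$ the total increase satisfies $\sum_i Y_i\leq 4|S_t|<n$, so the overall iteration count remains $O(n^2)$ and $T_A=O(n^5)$ as you claim.
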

\begin{proof}
Let $(G,F_0)\in \cR$ and $G_1=([n],F_0\cap E(G))$. Write $\cE_{exp}',\cE_{count}$ and $\cA_j'$ for the events $\cE_{exp}'(G,F_0),\cE_{count}(G,F_0)$ and $\cA_j'(G,F_0)$ respectively. 
Lemma \ref{lem:auxrandupper} implies that $t\leq \log_2 n$ and therefore CerHAM($G, F_0$) may execute line $4$ at most $\log_2 n+ 0.01n^2$ times. Each execution of line $4$ runs in $O(n^3)$ time; hence line $4$ takes in total $O(n^5)$ time.
This is also equal to $T_A(G,F_0)$.

In the event $\cE_{exp}'\lor \cE_{count}$ the algorithm may exit the while loop. Before exiting the while-loop, it may reach line 6 at most $\log_2n < n$ times, each time spending at most $O(n^62^n)$ time at lines $6$ to $9$. After exiting the while-loop, it executes the Inclusion-Exclusion HAM algorithm whose complexity is $O(n^62^n)$. Hence  in the event  $\cE_{exp}'\lor \cE_{count}$ CerHAM($G, F_0$) runs in $O(n^72^n)$ time. This is also the worst-case running time of CerHAM($\cdot, \cdot$).

On the other hand, if none of $\cE_{count}$, $\cE_{exp}'$ occurs, by Lemma \ref{lem:auxrandupper} we have that  $|S_t|\leq 0.02n$. Hence we may partition the event $\neg \cE_{exp}' \land \neg \cE_{count}$ into the events $\{\cF_j\}_{j=0}^{0.02n}$ where $\cF_j=\{|S_t|=j\} \land \neg \cE_{exp}\land \neg\cE_{count}$. In the event $\cF_j$ CerHAM($G, F_0$) executes lines $6$ to $9$ at most $t\leq |S_t|=j$ times. Each of the at most $j$ executions of FindSparse runs in $O(j^3\binom{n}{j})$ time (see Observation \ref{obs:runtimeFindSparse}).  Thereafter, each of the executions of CoverAndAdjust at line $8$ runs in $O(n j^6 2^{3j})$ time (see the last paragraph of Section 2).
Thus lines $6$ to $9$ are executed in $O(j^4\binom{n}{j}+j^7 2^{3j}n)$ time.

Equation \eqref{eq:upperTB} follows from the observation that if the event $\cF_j$ occurs then the event $\cA_j' \land \neg \cE_{exp} \land  \neg\cE_{count}$ also does and therefore $\Pr(\cF_j)\leq \Pr(\cA_j' \land \neg \cE_{exp}' \land \neg \cE_{count})\leq \Pr(\cA_j')$.
\end{proof}

\begin{lemma}\label{lem:uppercalcrand}
Let $G\sim G(n,p)$, $G'\sim G(n,0.5)$ and $F_0=E(G)\cap E(G')$. Then $T_B(G, F_0)=O(1)$ for $ p\geq \frac{100\log n}{n}$.
\end{lemma}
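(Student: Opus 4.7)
The plan is to apply Lemma~\ref{lem:upperT}, which bounds $\mathbb{E}(T_B(G,F_0))$ in terms of $\Pr(\cE_{exp}'\lor\cE_{count})$ and the $\Pr(\cA_j')$'s, provided $(G,F_0)\in\cR$. The hypothesis $(G,F_0)\in\cR$ is automatic here: since $F_0=E(G)\cap E(G')\subseteq E(G)$, the set $F_0\setminus E(G)$ is empty and the defining inequality of $\cR$ holds vacuously. The key observation driving everything else is that $G\cap G_{F_0}$ has edge set $E(G)\cap E(G')$, which is distributed as $G(n,p/2)$ with $p/2\ge 50\log n/n$; the three expansion-type probabilities in Lemma~\ref{lem:upperT} are therefore analysed as routine questions about a dense binomial random graph.

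For $\Pr(\cA_j'(G,F_0))$ with $1\le j\le 0.02n$, the event requires a set $S$ of size $j$ together with a set $T\subseteq[n]\setminus S$ of size $n-3j+1\ge 0.94n$ with no edge of $G(n,p/2)$ between them. A standard union bound gives
\[
\Pr(\cA_j')\le \binom{n}{j}\binom{n-j}{n-3j}(1-p/2)^{j(n-3j)}\le n^{4j}\exp(-0.47\,pjn)\le n^{-43j},
\]
using $pn\ge 100\log n$. This decay easily dominates both $j^4\binom{n}{j}\le n^{j+4}$ and $j^7 2^{3j}n\le n^8\cdot 8^j$, so each summand is $n^{-\Omega(1)}$ and the whole sum over $j$ is $o(1)$. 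The $j=0$ term is zero. Applying the same calculation to $\Pr(\cE_{exp}'(G,F_0))$, now over $s\in[0.02n,0.27n]$, yields a bound of $n^{-\Omega(n)}$ which remains $o(1)$ even after multiplication by the prefactor $n^7 2^n$.

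The delicate ingredient is $\Pr(\cE_{count}(G,F_0))$. On the event $\neg\cE_{exp}'$, Lemma~\ref{lem:auxrandupper} forces $|S_t|\le 0.02n$ and hence by~(\ref{eq:probrand}), $\sum_{i=1}^{0.01n^2}X_i<4|S_t|+n\le 1.08n$ whenever $\cE_{count}$ also holds. To turn this deterministic consequence into a probability bound I argue that each $X_i$ is, conditional on the past, a Bernoulli of parameter at least $p/2$: when CerHAM queries an edge $uw\notin F$, what is revealed is $uw\in E(G)$ given only that $uw\notin F_0$, and a direct computation gives
\[
\Pr(uw\in E(G)\mid uw\notin F_0)=\frac{p/2}{1-p/2}\ge \frac{p}{2}.
\]
Combined with the independence of $E(G)$ across as-yet-unqueried edges, this makes $\sum X_i$ stochastically dominate $\mathrm{Bin}(0.01n^2,p/2)$, whose mean is $\ge 0.5n\log n$. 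Chernoff's inequality~(\ref{eq:chernofflower}) then gives $\Pr(\sum X_i<1.08n)\le \exp(-\Omega(n\log n))=n^{-\Omega(n)}$, so $n^7 2^n\Pr(\cE_{exp}'\lor\cE_{count})=o(1)$, and summing all contributions yields $\mathbb{E}(T_B)=O(1)$.

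The main obstacle is the conditional analysis of the $X_i$'s: one has to set up the filtration generated by CerHAM's adaptive queries and check that conditioning on $uw\notin F_0$ (rather than on $uw\notin E(G')$) still leaves enough residual randomness in $E(G)$ for a Chernoff-type concentration to apply. Once that step is in place, the remainder is routine union-bound bookkeeping in $G(n,p/2)$, and the hypothesis $p\ge 100\log n/n$ enters precisely to force each of the three probability bounds to decay faster than the corresponding prefactor in Lemma~\ref{lem:upperT} grows.
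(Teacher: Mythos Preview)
Your proof is correct and follows essentially the same route as the paper's: verify $(G,F_0)\in\cR$ from $F_0\subseteq E(G)$, then plug into Lemma~\ref{lem:upperT} and bound each of $\Pr(\cE_{exp}')$, $\Pr(\cA_j')$, and $\Pr(\cE_{count}\setminus\cE_{exp}')$ via union bounds and Chernoff in $G_1\sim G(n,p/2)$. The one place you are actually more careful than the paper is the conditional analysis of the $X_i$'s: the paper simply asserts ``$X_i=1$ with probability $0.5p$ independently of $X_1,\dots,X_{i-1}$'', whereas you correctly note that the algorithm conditions on $uw\notin F_0$ (not on $uw\notin E(G')$) and compute $\Pr(uw\in E(G)\mid uw\notin F_0)=\tfrac{p/2}{1-p/2}\ge p/2$, which is what the stochastic domination by $\mathrm{Bin}(0.01n^2,p/2)$ genuinely requires.
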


\begin{proof}

$F_0\subseteq E(G)$ implies that $(G,F_0)\in \cR$. Let $G_1=([n],F_0)$. 
Write $\cE_{exp}',\cE_{count}$ and $\cA_j'$ for the events $\cE_{exp}'(G,F_0),\cE_{count}(G,F_0)$ and $\cA_j'(G,F_0)$ respectively. 
In the event $\cE_{exp}'$  there exists a set $S\subset [n]$  with size in $[0.02n, 0.27n]$  such that $|N_{ G_1}(S)|<2|S|$. Therefore $|S|\geq 0.02n$, $|[n]\setminus (S\cup N_{G_1}(S))|\geq (1-0.27\cdot3)n=0.19n$ and no edge from $S$ to $[n]\setminus (S\cup N_{G_1}(S))$ belongs to $G_1$. Thus,
\begin{align*}
\Pr(\cE_{exp}')&\leq 2^n \cdot 2^n \cdot (1-0.5p)^{0.02n\cdot 0.19 n}\leq 4^n e^{-10^{-3} pn^2}=o(n^{-7}2^{-n}).
\end{align*}
In the event $\cE_{count}\setminus \cE_{exp}'$ \eqref{eq:probrand} implies that $\sum_{i=1}^{0.01n^2}X_i<n+4\cdot 0.02n\leq 1.1n$. For $1\leq i\leq 0.01n^2$, $X_i=1$ only if the corresponding edge belongs to $G$ but not to $G'$, hence with probability $0.5p$ independently of $X_1,X_2,...,X_{i-1}$. Thus, as $p\geq \frac{100\log n}{n}$, the Chernoff bound gives,
\begin{align*}
    \Pr(\cE_{count}\setminus \cE_{exp}')& \leq \Pr\bigg(Bin\bigg(0.01n^2,\frac{50\log n }{n}\bigg) \leq 1.2n\bigg)=o(n^{-7}2^{-n}).
\end{align*}
Finally, for $j\leq 0.02n$, in the event $\cA_j'$ one may identify sets $S$ and $W$ of size $j$ and $2j$ respectively such that $N_{G_1}(S)\subset W$. In addition note that for $j\leq 0.02n$ we have that $ j^{3}2^{3j}n \leq 8 \binom{n}{j}$. Therefore,
\begin{align*}
    \bigg[j^4 \binom{n}{j} +j^72^{3j}n\bigg]
\Pr(\cA_j') &\leq 10j^4\binom{n}{j} \binom{n}{j} \nonumber \binom{n}{2j}(1-0.5p)^{j\cdot (n-3j)} 
\\&\leq 10j^4\cdot n^{4j}e^{-0.5pj(1-0.06)n} \leq 10j^4\cdot n^{4j}e^{-45j\log n} 
=O(1).
\end{align*}
At the last inequality we used that $p\geq \frac{100\log n}{n}$. \eqref{eq:upperTB} and the above calculations  imply that $T_B(G, F_0)=O(1)$ for $ p\geq \frac{100\log n}{n}$.
\end{proof}
\textbf{Proof of Theorem \ref{thm:Rmain}:} At the execution of RCerHAM($G$) we have that $F_0\subset E(G)$ hence $(G,F_0)\in \mathcal{R}$. $G(n,0.5)$ may be generated in $O(n^7)$ time. Thereafter, the decomposition of the expected running time of RCerHAM($G$) follows from lemmas \ref{lem:upperT} and \ref{lem:uppercalcrand}.
\qed
\section{Derandomizing RCerHAM}\label{sec:derandom}

The only place where RCerHAM uses any sort of randomness is in generating the set $F_0$ before  executing  CerHAM. There, it generates $G'\sim G(n,0.5)$ and lets $F_0=E(G)\cap E(G')$. To derandomize RCerHAM we substitute $E(G')$ with a deterministic set of edges $F_0$ that has the property $(G,F_0)\in \cR$.

For that  we let $F_0=E(H_n)$ where $H_n$ is a deterministic graph on $[n]$. It is constructed by taking a $d$-regular pseudorandom graph $H_n'$ on at least $n$ and at most $4n$ vertices, then taking the subgraph of $H_n'$ induced by a subset of $V(H_n')$ of size $n$ and finally adding a number of edges incident to vertices of small degree. The construction of $\{H_n\}_{n\geq 1}$ as well as the proof of the lemma that follows are given in Appendix \ref{app:pseudorandom}.

\begin{lemma}\label{lem:pseudo}
For sufficiently large $n$ there exists a graph $H_n$ on $[n]$ that can be constructed in $O(n^7)$ time and satisfies the following.
\begin{itemize}
    \item[(a)] For every pair of disjoint sets $U, W\subseteq [n]$ the number of edges spanned by $U\times W$ is at least $\frac{0.1|U||W|}{n} -n^{7/4}$ and at most $\frac{0.101|U||W|}{n} +n^{7/4}$
    \item[(b)] $H_n$ has minimum degree $0.1n$.
    \item[(c)] At most $n^{2/3}$ vertices in $[n]$ have degree larger than $0.101n$ in $H_n$. 
\end{itemize}
\end{lemma}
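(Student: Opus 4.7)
The plan is to follow the three-step recipe sketched immediately before the lemma: (i) build an explicit $d$-regular pseudorandom graph $H_n'$ on $N\in[n,4n]$ vertices with spectral gap $\lambda_2(H_n')=O(\sqrt{d})$ and $d/N$ very close to $0.1$; (ii) restrict to an induced subgraph $H_n'':=H_n'[V]$ of order $n$; and (iii) patch the low-degree vertices of $H_n''$ with extra edges to obtain $H_n$. For (i), any standard explicit pseudorandom construction works—for instance a suitable Cayley graph on $\mathbb Z/N\mathbb Z$ or a Lubotzky-Phillips-Sarnak Ramanujan graph—with an appropriate parameter $N\in[n,4n]$ supplied by Bertrand's postulate. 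The whole construction runs in polynomial time, well within the $O(n^7)$ budget. I set $d/N=0.1+\delta$ with $\delta=n^{-1/3}$.

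For (ii), the expander mixing lemma applied to $H_n'$ gives, for any disjoint $U,W\subseteq V$,
$$\left|e_{H_n''}(U,W)-\tfrac{d}{N}\,|U|\,|W|\right|\le \lambda_2(H_n')\sqrt{|U|\,|W|}=O(n^{3/2}),$$
already handling (a) with ample $n^{7/4}$ slack. For pointwise degrees, decompose $\mathbf{1}_V=\tfrac{n}{N}\mathbf{1}+\mathbf{w}$ with $\mathbf{w}\perp\mathbf{1}$; since $A(H_n')\mathbf{1}=d\mathbf{1}$ and $\|A\mathbf{w}\|_2\le\lambda_2\|\mathbf{w}\|_2=O(\sqrt{dn})$, reading off coordinates in $V$ yields the variance bound
$$\sum_{v\in V}\Bigl(\deg_{H_n''}(v)-\tfrac{dn}{N}\Bigr)^2=O(dn).$$

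For (iii), set $L:=\{v\in V:\deg_{H_n''}(v)<\lceil 0.1n\rceil\}$. By the variance bound and Chebyshev, $|L|=O(d/(\delta^2 n))=O(\delta^{-2})=O(n^{2/3})$ (using $d=\Theta(n)$), and similarly $|\{v\in V:\deg_{H_n''}(v)>0.101n\}|=O(1)$. For each $v\in L$ I add $\lceil 0.1n\rceil-\deg_{H_n''}(v)$ new edges incident to $v$, spreading the other endpoints roughly uniformly among the $\Omega(n)$ vertices of near-mean $H_n''$-degree. The total number of added edges is $O(|L|\cdot n)=O(n^{5/3})\ll n^{7/4}$, so (a) survives; and each non-$L$ vertex sees its degree pushed up by at most $O(n^{5/3}/n)=O(n^{2/3})\ll 0.001n$, so the only vertices of $H_n$-degree above $0.101n$ are the $O(1)$ pre-existing high-degree outliers of $H_n''$—well under the $n^{2/3}$ bound of (c). Property (b) holds by construction.

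The main obstacle is precisely this pointwise degree control for (b) and (c): EML alone bounds only averaged edge counts, whereas the lemma asks for simultaneous lower and upper bounds on individual degrees. The second-moment argument above resolves this cleanly using the spectral gap of $H_n'$, and the calibration $\delta=n^{-1/3}$ is the natural choice that aligns all three error budgets—$|L|$, the number of $H_n''$-high-degree vertices, and the cumulative perturbation to edge counts—under the $n^{2/3}$ and $n^{7/4}$ thresholds of the lemma.
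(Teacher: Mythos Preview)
Your three-step outline (explicit pseudorandom graph $\to$ restrict to $n$ vertices $\to$ patch low-degree vertices) matches the paper's exactly, but the paper's execution is simpler in two respects. First, the paper fixes the density at a \emph{constant} strictly between $0.1$ and $0.101$: it uses the strongly-regular family $H_{q,k}$ of Delsarte--Goethals and Turyn with $q$ a prime in $[\sqrt n,2\sqrt n]$ and $k=\lceil 0.1001q\rceil$, so that $d/N\approx 0.1001$ and $\lambda_2=O(q)=O(\sqrt n)$. With a constant gap on both sides, the expander mixing lemma applied directly to $U=S_n$ and $W=[n]\setminus S_n$ already yields $|S_n|=O(\sqrt n)$ (and likewise $|L_n|=O(\sqrt n)$ for the high-degree set); no second-moment argument is needed, and the patching step --- which simply joins every vertex of $S_n$ to all of $[0.1n{+}1]$ --- adds only $O(n^{3/2})$ edges. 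Your calibration $\delta=n^{-1/3}$ forces you into the variance bound and produces the weaker $|L|=O(n^{2/3})$; it buys nothing, since the upper bound in (a) only needs $d/N\le 0.101$. Second, a minor caution on your ``any standard construction'': LPS Ramanujan graphs are sparse ($d=p+1$ against $N=\Theta(q^3)$) and cannot realise $d/N\approx 0.1$, and a generic Cayley graph on $\mathbb Z/N\mathbb Z$ has no automatic $\lambda_2=O(\sqrt d)$ bound --- you should name a concrete dense family, and the strongly-regular graphs the paper uses are exactly such a family.
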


\subsection{A deterministic algorithm for certifying Hamiltonicity}\label{sec:certifyHam}
DCerHAM starts by setting $G'=H_n$ and $F_0=E(H_n)$. Then it implements CerHAM($G,F_0$). The analysis of DCerHAM is identical to the analysis of RCerHAM modulo the calculations done for bounding the probabilities of events that depend on $F_0$ and $G$.

\subsection{Analysis of CerHAM in the deterministic setting - the dense regime}

\begin{lemma}\label{lem:uppercalc}
Let  $G\sim G(n,p)$, $G'=H_n$ and $F_0=E(H_n)$. Then, $T_B(G, F_0)=O(1)$ for $ p\geq \frac{100\log n}{n}$.
\end{lemma}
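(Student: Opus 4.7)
The plan is to parallel the proof of Lemma~\ref{lem:uppercalcrand}, with the randomness of $G'$ replaced by the pseudorandom properties of $H_n$ guaranteed by Lemma~\ref{lem:pseudo}. First I would verify that $(G,F_0) \in \cR$ holds for every realisation of $G$, not merely with high probability: for any $S \subseteq [n]$, the number of edges of $F_0 = E(H_n)$ incident to $S$ is at most $\sum_{v \in S} d_{H_n}(v)$, which property~(c) of Lemma~\ref{lem:pseudo} bounds by $0.101 n |S| + n \cdot n^{2/3} \le 0.11 n|S| + n^{7/4}$; since $F_0 \setminus E(G) \subseteq F_0$, the same bound applies a fortiori. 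Thus Lemma~\ref{lem:upperT} applies and, via \eqref{eq:upperTB}, it remains to bound $\Pr(\cE_{exp}'(G,F_0))$, $\Pr(\cE_{count}(G,F_0) \setminus \cE_{exp}'(G,F_0))$, and $\Pr(\cA_j'(G,F_0))$ for $0 \le j \le 0.02n$.

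For $\cE_{exp}'$: on this event there exist $S$ with $|S| \in [0.02n, 0.27n]$ and $T := [n] \setminus (S \cup N_{G_1}(S))$ with $|T| \ge 0.19n$ such that no $G$-edge inside $H_n$ joins $S$ to $T$. Property~(a) of Lemma~\ref{lem:pseudo} supplies $\Omega(n^2)$ edges of $H_n$ between $S$ and $T$, and a union bound over the (at most $4^n$) choices of $(S,T)$ gives $\Pr(\cE_{exp}') \le 4^n (1-p)^{\Omega(n^2)} = o(n^{-7} 2^{-n})$ for $p \ge 100 \log n /n$. For $\cE_{count} \setminus \cE_{exp}'$: each query at line~15 concerns an edge outside $F_0 = E(H_n)$, so conditional on the history the indicator $X_i \sim \text{Bernoulli}(p)$ and the $X_i$ can be coupled with an i.i.d.\ $\text{Bernoulli}(p)$ sequence; by~\eqref{eq:probrand} (which forces $\sum_{i \le 0.01n^2} X_i < 1.1n$ on the event in question) and the Chernoff bound, $\Pr(\cE_{count} \setminus \cE_{exp}') \le \Pr(\text{Bin}(0.01 n^2, p) < 1.1n) = o(n^{-7}2^{-n})$.

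The delicate step is $\cA_j'$ at small $j$. Fix $S$ of size $j \le 0.02n$ and $W \supseteq N_{G_1}(S)$ of size exactly $2j$; the event requires that no $G$-edge of $H_n$ joins $S$ to $V \setminus (S \cup W)$. Property~(a) is vacuous here, so I would invoke the minimum-degree property~(b): each $v \in S$ has at least $0.1n$ neighbors in $H_n$, of which at most $3j - 1$ lie in $S \cup W$, giving $e_{H_n}(S, V \setminus (S \cup W)) \ge j(0.1n - 3j) \ge 0.04 nj$. Hence $(1-p)^{0.04nj} \le n^{-4j}$, and union-bounding over $(S,W)$ yields $\Pr(\cA_j') \le \binom{n}{j} \binom{n}{2j} n^{-4j} \le n^{-j}$. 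Reusing the inequality $j^7 2^{3j} n \le 8 j^4 \binom{n}{j}$ from Lemma~\ref{lem:uppercalcrand}, the $\cA_j'$-contribution to \eqref{eq:upperTB} is at most $\sum_{j \ge 1} 9 j^4 \binom{n}{j} n^{-j} \le 9 \sum_{j \ge 1} j^4/j! = O(1)$.

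Combining all three estimates with \eqref{eq:upperTB} gives $\E(T_B(G,F_0)) = O(n^7 2^n) \cdot o(n^{-7}2^{-n}) + O(1) = O(1)$. I expect the main obstacle to be the small-$j$ regime of $\cA_j'$: the pseudorandom edge-count estimate (a) becomes too weak at that scale, so the argument must instead rely on the explicit minimum-degree bound (b) of $H_n$ to guarantee that enough $H_n$-edges escape $S \cup W$.
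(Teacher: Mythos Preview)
Your proposal is correct and follows essentially the same approach as the paper's proof: verify $(G,F_0)\in\cR$ via Lemma~\ref{lem:pseudo}, then bound $\Pr(\cE_{exp}')$, $\Pr(\cE_{count}\setminus\cE_{exp}')$, and the $\cA_j'$ terms exactly as in Lemma~\ref{lem:uppercalcrand} but with the pseudorandom edge counts of $H_n$ in place of the $G(n,0.5)$ randomness. The only cosmetic difference is that the paper handles $\cA_j'$ by splitting into $j\le 10^{-5}n$ (minimum degree, property~(b)) and $j>10^{-5}n$ (edge-count, property~(a)) to obtain a uniform $0.09jn$ lower bound, whereas you use property~(b) alone for all $j\le 0.02n$ to get $0.04jn$; both suffice since $p\ge 100\log n/n$.
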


\begin{proof}
Lemma \ref{lem:pseudo} implies that $(G,F_0)\in \cR$. Thus, it suffices to verify that the expression given in $\eqref{eq:upperTB}$ is $O(1)$.  Let $G_1=([n],F_0\cap E(G))$. Write $\cE_{exp}',\cE_{count}$ and $\cA_j'$ for the events $\cE_{exp}'(G,F_0),\cE_{count}(G,F_0)$ and $\cA_j'(G,F_0)$ respectively. 

In the event $\cE_{exp}'$  there exists a set $S\subset [n]$  with size in $[0.02n, 0.27n]$  such that $|N_{ G_1}(S)|<2|S|$. Therefore $|S|\geq 0.02n$, $|[n]\setminus (S\cup N_{G_1}(S))|\geq (1-0.27\cdot3)n=0.19n$ and no edge from $S$ to $[n]\setminus (S\cup N_{G_1}(S))$ belongs to $G_1$. Lemma \ref{lem:pseudo} implies that there are at least $0.1\cdot 0.02n \cdot 0.19n -o(n^2) \geq 10^{-4}n$ edges from $S$ to $[n]\setminus (S\cup N_{G_1}(S))$ in $E(H_n)$. Each of these edges belongs to $G$, hence to $G_1$, independently with probability $p$. Thus,
\begin{align*}
\Pr(\cE_{exp}')&\leq 2^n \cdot 2^n \cdot (1-p)^{10^{-4}n^2} \leq 4^n e^{-10^{-4} pn^2}=o(n^{-7}2^{-n}).
\end{align*}
In the event $\cE_{count}\setminus \cE_{exp}'$ \eqref{eq:probrand} implies that $\sum_{i=1}^{0.01n^2}X_i<n+8\cdot 0.02n\leq 1.2n$. For $1\leq i\leq 0.01n^2$, $X_i=1$ only if the corresponding edge $uv$ belongs to $G$ but not to $G'$ ($e\notin F$ hence $e\notin E(H_n) \subseteq F$ due to line 11 of CerHAM), hence with probability $p$ independently of $X_1,X_2,...,X_{i-1}$. Thus, using the Chernoff bound, we have,
\begin{align*}
    \Pr(\cE_{count}\setminus \cE_{exp}')& \leq \Pr(Bin(0.01n^2,p)\leq 1.2n)=o(n^{-7}2^{-n}).
\end{align*}
Finally, for $j\leq 0.02n$, in the event $\cA_j'$ one may identify sets $S$ and $W$ of size $j$ and $3j$ respectively such that $S\cup N_{G_1}(S)\subset W$. $H_n$ has minimum degree $0.1n$ and therefore if $j\leq 10^{-5}n$ then there exists at least $0.09jn$ edges from $S$ to $[n]\setminus W$ in $E(H_n)$. On the other hand, if $j\geq 10^{-5}n$,
Lemma \ref{lem:pseudo} implies that there exist  at least $0.1\cdot j \cdot (n-3j)-o(n^2)\geq 0.09jn$ edges from $S$ to $[n]\setminus W$. As none of them belongs to $G$ in the event $\cA_j'$, we have,
\begin{align*}
   \bigg[j^4 \binom{n}{j} +j^72^{3j}n\bigg]
\Pr(\cA_j') \nonumber
 &\leq 10j^4\binom{n}{j} \binom{n}{j} \nonumber \binom{n}{2j}(1-p)^{0.09jn} 
\\&\leq 10j^4\cdot n^{4j}e^{-0.09pn j} \leq 10j^4\cdot n^{4j}e^{-9j\log n} 
=O(1).
\end{align*}
At the last inequality we used that $p\geq \frac{100\log n}{n}$.

\eqref{eq:upperTB} and the above calculations  imply that $T_B(G, F_0)=O(1)$ for $ p\geq \frac{100\log n}{n}$.
\end{proof}

\textbf{Proof of Theorem \ref{thm:main} for $p\geq \frac{100\log n}{n}$:} At the execution of DCerHAM($G$), Lemma \ref{lem:pseudo} implies that $(G,F_0)\in \mathcal{R}$. DCerHAM($G$) generates $H_n$ in $O(n^7)$ time. Then, the decomposition of the expected running time of DCerHAM($G$) follows from lemmas \ref{lem:upperT} and \ref{lem:uppercalc}.
\qed

\section{The DCerHAM algorithm - bypassing the Hamiltonicity threshold}

For a graph $G$ we let $V_6(G)'$ be the $6$-core of $G$ i.e., the maximal subset $S$ of $V(G)$ with the property that every vertex in $S$ has at least $6$ neighbors in $S$. We also let $V_6(G)=V(G)\setminus V_6(G)'$. It is well known that $V_6(G)'$ can be identified by a peeling procedure, where one recursively removes from $G$ vertices whose current degree is at most $5$. $V_6(G)'$ is the set of vertices remaining at the end of this peeling procedure. We extend the description of the DCerHAM algorithm first to the range $(500\log\log n)/{n}\leq p\leq (100\log n)/{n}$, done at this section, and then to the range $5000/n\leq p\leq ( 500\log \log n)/n$, done at the next section. At both ranges we consider $V_6(G_1)$ where $G_1=([n],E(H_n)\cap E(G))$. 

The basic idea that allows us to extend the analysis of DCerHAM is the following. Initially, add to $S_0$ all the vertices in $V_6(G_1)$. Thereafter any vertex that is added to $S_0$ belongs to some set $W$ with the property $|N_{G_1}(W)\setminus V_6(G)|< 2|W|$. Note that that every vertex in $[n]\setminus V_6(G_1)$ has degree $6$. Thus, if at the end the set $S_t\setminus V_6(G_1)$ has size $i$ then we have identified a set whose neighborhood in $G_1[V(G)\setminus V_6(G_1)]$ has size $j<2i$ and which spans, together with its neighborhood in $G_1$, 
at least $(6i+j)/2 \geq 2i+ (2i+j)/2>2i+j$ edges in $G_1$. This will be the new moderately unlikely event that we will consider.

\subsection{Analysis of DCerHAM  - the middle range}\label{sub:middle:algo}
Given a graph $G$ and a set of edges $F_0$ such that $(G,F_0)\in \cR$ the DCerHAM algorithm with input $G,F_0$ executes the following steps. First it lets $G_1=([n],F_0\cap E(G))$ and $K=V_{6}(G_1)$. If $|K|< \frac{n}{10\log_2 n}$ then it lets $S_0=K$ and $F=F_0$.  Else it sets $G_R,F,S'=$FindSparse$(G,G_1,E(G_1),K)$ and  $S_0=K\cup S'$.  We consider the case distinction $|K|<\frac{n}{10\log_2 n}$ and $|K|\geq \frac{n}{10\log_2 n}$ for technical reasons that will become apparent at the proof of Lemma \ref{lem:coundmiddlerandom}.  In both cases, it implements CoverAndAdjust and finds an FPP $\cP_0$ that covers $S_0$ or outputs that $G$ is not Hamiltonian.
Then it continues by executing the CerHAM algorithm. 
Now the CerHAM algorithm takes as input the graph $G$ and the sets $F,S_0$ and $\cP_0$. CerHAM now runs with the following two modifications. First, at line $1$ it sets $S=S_0$ and $\cP=\cP_0$ (instead of $S=\cP=\emptyset$). Second, henceforward every time it executes the FindSparse algorithm, at line $4$ of FindSparse, it lets $\cQ$ be the set of subsets of $V(G)\setminus S_0$ of size at most $|S\setminus S_0|$ (instead of size at most $|S|$).

For a pair of graphs $G_1,G_2$ on the same vertex set $V$ we let $G_1\cap G_2=(V,E(G_1)\cap E(G_2))$. Given a set of edges $F_0$ and a graph $G$ on $[n]$, with $G_{F_0}=([n],F_0)$, define the events:
\begin{align*}
    \cE_{s}(G,F_0)=\{ |V_{6}(G\cap G_{F_0})|\geq s) \},
\end{align*}
\begin{align*}
    \cE_{exp}(G,F_0)=\{&\exists S\subset [n]\setminus V_{6}(G\cap G_{F_0}) : 0.02n\leq S\leq 0.27n 
    \\&\text{ and } |N_{G\cap G_{F_0}}(S)\setminus  V_{6}(G\cap G_{F_0})|< 2|S|\},
\end{align*}
\begin{align*}
    \mathcal{A}_i(G,F_0):=\{ 
  &\exists W \subseteq [n]\setminus V_{6}(G \cap G_{F_0})\text{ such that $i\leq |W| \leq 0.02n$ and }  
  \\&  |N_{G \cap G_{F_0}}(W)\setminus V_{6}(G \cap G_{F_0})|<2|W|\},
\end{align*}
and 
\begin{align*}
    \mathcal{B}_i(G,F_0):= \{&\exists W \subseteq [n]\text{ such that }|N_{G\cap G_{F_0}}(W)|\leq 7|W| 
    \\&\text{and $W\cup N_G(W)$ is connected in $G$}\}.
\end{align*}
The above events are used in the analysis of DCerHAM in the following manner. $K=V_{6}(G \cap G_{F_0})$. Thus 
$\cE_{s}(G,F_0)$ states that  $|K|\geq s$, while $\cE_{exp}(G,F_0)$ implies that we have identified a medium size non-expanding set in $G[V(G)\setminus K]$, $S_i\setminus K$ is such a set. Once again, the event $\cA_i$ corresponds to the unlikely event that we have identified a non-expanding set of size $i$, now in $G[V(G)\setminus K]$. Finally, the events $\mathcal{B}_i(G,F_0)$, $i\leq 0.03n$ are used to upper bound the expected running time of CoverAndAdjust, say $T$. We will bound $T$ by a function that depends on the largest component spanned by $S_t\cup N_{G_R}(S_t)$, say $C$, as opposed to the size of $S_t$. It will turn out that if $|C|=i$ then the event $\cB_i(G,F_0)$ occurs.

We upper bound the probability of the above events occurring at the following Lemma. Its proof is located at Appendix \ref{app:sec:6core}
\begin{lemma}\label{lem:color}
Let $\frac{5000}{n}\leq p \leq \frac{500\log\log n}{n}$, $G\sim G(n,p)$, $G'=H_n$ and $F_0=E(G)\cap E(G')$. Then  for $1\leq i\leq 0.02n$, $\frac{\log n}{100} \leq j \leq 0.03n$ and $\frac{n}{\log^2 n}\leq s\leq 0.01n$ the following hold.
\begin{equation}\label{eq:4core}
 \Pr(\cE_{s}(G,F_0))\leq  \bigg( \bfrac{en}{s} \bfrac{0.1enp}{5}^5 e^{-0.09np}\bigg)^s, \hspace{10mm} \Pr(\cE_{exp}(G,F_0)\lor \cE_{0.01n})\leq 2^{-1.1n},
\end{equation}
\begin{equation}\label{eq:big_s}
i^4 \binom{n}{i}\Pr(\cA_i(G,F_0))=O(1) \hspace{5mm} \text{ and } \hspace{5mm} j^72^{3j}\Pr( \cB_j(G,F_0)) \leq e^{-100j}.
\end{equation}
\end{lemma}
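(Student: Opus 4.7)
My plan is to prove each of the four bounds by a union bound over candidate witness sets, combined with the independence of $G_1 := G \cap H_n$ on disjoint edge sets and the pseudorandom properties of $H_n$ from Lemma~\ref{lem:pseudo}. The new difficulty compared with the dense-regime Lemma~\ref{lem:uppercalc} is that $np$ is only $\Omega(1)$ rather than $\Omega(\log n)$, so the ``no $G_1$-edges into a linear set'' estimate alone no longer dominates the combinatorial counting factor and the $6$-core structure has to be exploited quantitatively. The $\Pr(\cE_s)$ bound is the standard $6$-core peeling estimate: if $|V_6(G_1)| \ge s$, then one can order the peeled vertices $v_1, \dots, v_s$ so that each $v_i$ has at most $5$ $G_1$-neighbours in $[n] \setminus \{v_1, \dots, v_{i-1}\}$, and hence in $[n] \setminus T$ where $T := \{v_1, \dots, v_s\} \subseteq V_6(G_1)$. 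These events are independent across $v \in T$ (disjoint edge sets) and each has probability at most $(0.1enp/5)^5 e^{-0.09np}$ by a direct binomial tail estimate using the minimum-degree bound $d_{H_n}(v, [n] \setminus T) \ge 0.09n$; multiplying by $\binom{n}{s} \le (en/s)^s$ gives the first displayed inequality, and plugging in $s = 0.01n$ with $np \ge 5000$ bounds $\Pr(\cE_{0.01n})$ well below $2^{-1.1n}$.

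For $\Pr(\cE_{exp} \cap \neg \cE_{0.01n})$ I would union-bound over triples $(S, N, V)$ with $|S| \in [0.02n, 0.27n]$, $|N| < 2|S|$, $|V| \le 0.01n$, standing respectively for the sparse set, its $6$-core neighbourhood, and $V_6(G_1)$. The witness condition forces $S$ to have no $G_1$-edges into $Y := [n] \setminus (S \cup N \cup V)$, which has size at least $0.19n$; by Lemma~\ref{lem:pseudo}(a) one has $e_{H_n}(S, Y) = \Omega(n^2)$, so this probability is at most $e^{-\Omega(n^2 p)} = e^{-\Omega(n)\cdot np}$, which under $np \ge 5000$ comfortably beats the $2^{O(n)}$ combinatorial factor and combines with the $\cE_{0.01n}$ bound above to yield the stated $2^{-1.1n}$.

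For $i^4 \binom{n}{i} \Pr(\cA_i) = O(1)$ the crucial structural observation (also motivating the $6$-core-based modification of DCerHAM in Section~5) is that any witness $W$ lies in the $6$-core, so every $v \in W$ has at least $6$ $G_1$-neighbours inside the small set $W \cup N$ of size at most $3|W|$, forcing $G_1[W \cup N]$ to carry at least $3|W|$ edges. Working inside $\neg \cE_{0.01n}$, I would union-bound over $(W, N)$ and split on $j = |W|$: for small $j$ the dense-structure estimate $\Pr(e_{G_1}[W \cup N] \ge 3j) \le (C j p)^{3j}$ (obtained from $\binom{m}{3j} p^{3j}$ with $m \le \binom{3j}{2}$) is what is small, while for large $j$ one exploits instead the Chernoff tail bound for the large deviation $|N_{G_1}(W)| \le 3j + 0.01n$ from its mean $\ge 0.09 j n p$, which gives $e^{-\Omega(jnp)}$ and easily beats $n^{-3j}$. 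Balancing the two regimes against the counting factor $\binom{n}{j}\binom{n}{2j} \le (en/j)^{3j}/4^j$ and multiplying by $i^4 \binom{n}{i}$ yields $O(1)$.

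For $j^7 2^{3j} \Pr(\cB_j) \le e^{-100j}$ I would fix $W$ of size $j$ and decouple: by the minimum degree of $H_n$ the mean of $|N_{G_1}(W)|$ is at least $0.09jnp \ge 450 j$, so the Chernoff lower tail gives $\Pr(|N_{G_1}(W)| \le 7j) \le e^{-\Omega(jnp)}$; when in addition $|N_G(W)| \le 7j$ one controls the connectivity of $W \cup N_G(W)$ in $G$ via a Cayley spanning-tree count on $G[W \cup N_G(W)]$ (at most $k^{k-2}$ labelled trees on $k$ vertices, each tree edge present in $G$ with probability $p$). A union bound over $W$, under $j \ge (\log n)/100$ and $np \ge 5000$, then lands well below $e^{-100j}/(j^7 2^{3j})$. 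The main technical obstacle throughout is the $\cA_i$ step, where the dense-subgraph estimate and the expansion estimate have to be balanced precisely against the $n^{3j}$ combinatorial factor and the potential $2^{\Theta(n)}$ cost of enumerating $V_6(G_1)$; the latter is tamed by conditioning on $\neg \cE_{0.01n}$, which is exactly the quantitative reason why DCerHAM is designed to absorb $V_6(G_1)$ into $S_0$ at the start of the middle-range analysis.
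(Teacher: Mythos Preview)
Your plan for $\cE_s$, for $\cE_{exp}\lor\cE_{0.01n}$, and (modulo one adjustment noted below) for $\cB_j$ matches the paper's proof. The substantive gap is in the $\cA_i$ bound.

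You correctly note that any witness $W$ lies in the $6$-core of $G_1$, so $G_1[W\cup N]$ carries at least $3|W|$ edges, where $N=N_{G_1}(W)\setminus V_6(G_1)$ has size $<2|W|$. But to use the complementary ``no $G_1$-edges from $W$ into the rest of the $6$-core'' you must control the $G_1$-edges from $W$ into $V_6(G_1)$: those edges are perfectly compatible with the witness condition for $\cA_i$. Your proposal says this is ``tamed by conditioning on $\neg\cE_{0.01n}$''; however that only yields $|V_6(G_1)|\le 0.01n$, and a union bound over the $\binom{n}{0.01n}$ possible locations of $V_6(G_1)$ still costs $2^{\Theta(n)}$, which the $e^{-\Omega(jnp)}$ factor cannot absorb when $j=o(n)$. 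Nor does your small-$j$ regime survive on its own: the pure dense-subgraph estimate $(Cjp)^{3j}$ combined with $\binom{n}{j}\binom{n}{2j}$ gives a term of order $(Cenp)^{3j}$, which diverges since $np\ge 5000$. So neither branch of your two-regime argument handles small $j$.

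The paper closes this gap with an additional device you are missing. It takes a maximal $A\subseteq V_6(G_1)$ such that $A\cup W$ spans a forest in $G_1$ with every tree rooted in $W$, and union-bounds over $A$. The Cayley-type count for such rooted forests contributes $(|A|+1)^{|A|-1}p^{|A|}$, and---this is the point---each $a\in A$ lies in $V_6(G_1)$ and hence independently satisfies the peeling condition, contributing a further factor $((0.02np)^5e^{-0.09np})^{|A|}$. Since $np\ge 5000$, this per-vertex factor is at most $e^{-400}$, so the sum over $|A|$ is geometric and converges irrespective of $j$. Only after $A$ is paid for does one get the clean ``no $H_n$-edges from $W$ to $[n]\setminus(W\cup Z\cup A)$ lie in $G$'' statement, and then the combination of the $\ge 2|W|+|Z|+1$ internal edges with the $e^{-0.03njp}$ external non-edges yields $i^4\binom{n}{i}\Pr(\cA_i)=O(1)$ uniformly in $1\le i\le 0.02n$.

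A smaller correction on $\cB_j$: the connectivity hypothesis is on $W\cup N_G(W)$ in $G$, and $|N_G(W)|$ is \emph{not} bounded by $7j$ (only $|N_{G_1}(W)|$ is), so a Cayley count on $G[W\cup N_G(W)]$ is too expensive. The paper instead extracts a minimal connecting set $X\subseteq[n]\setminus(W\cup Z)$ of size at most $j-1$ (using that $G_1[W\cup Z]$ has at most $|W|$ components) and performs the spanning-tree count on $W\cup Z\cup X$, which has size at most $9j$; with that fix your outline for $\cB_j$ goes through.
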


In place of Lemma \ref{lem:auxrandupper}, we have the following one.   
\begin{lemma}\label{lem:auxlower}
Let  $G$ be a graph on $[n]$  and $F_0$ be an edge set on $[n]$ with the property $(G,F_0)\in \cR$. In addition let $G_1=([n],F_0\cap E(G))$. Let $t$ be the larger $i$ such that the set $S_i$ is defined by CerHAM($G,F_0$) and $S_0,S_1,....,S_t$ be the corresponding sets. Then, $|N_{G_1}(S_t\setminus K)\setminus K|< 2|S_t\setminus K|.$ In addition, if the events $\cE_{exp}(G,F_0), \cE_{0.01n}(G,F_0)$  do not occur then $|S_t\setminus K|\leq 0.02n$. Furthermore,
\begin{equation}\label{eq:tlower}
t\leq \log_2\bfrac{|S_t|-|S_0|}{|S_1|-|S_0|}+1.    
\end{equation}
Finally, if  CerHAM exits the while-loop and $|S_t|<0.25 n$  then, 
\begin{equation}\label{eq:Xilower}
    \sum_{i=1}^{0.01n^2}X_i < 2t|S_0|+ 4|S_t|+n.
\end{equation}
\end{lemma}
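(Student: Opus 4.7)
The plan is to adapt the proof of Lemma \ref{lem:auxrandupper} to the setting where $S_0 \supseteq K$ is nonempty and where the FindSparse calls inside CerHAM are restricted to subsets of $V(G) \setminus S_0$ of size at most $|S \setminus S_0|$. First I would construct a sequence $K = Q_0 \subset Q_1 \subset \cdots \subset Q_r = S_t$ whose consecutive differences capture every individual set ever added to $S$: those added by the initial FindSparse call that builds $S_0$ from $K$ (in the case $|K|\ge n/(10\log_2 n)$), those added as some $W_{j+1}$ by a ReducePaths call inside CerHAM, and those added by the modified FindSparse inside CerHAM. In each case, the increment $Q_i\setminus Q_{i-1}$ is disjoint from $K$: for the initial FindSparse this is the standard FindSparse invariant; for ReducePaths it follows from Observation \ref{lem:posaendpoints} (the $S$-argument of the underlying RPR call contains $S_{i-1} \supseteq K$); and for the modified FindSparse it is enforced by design.

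For the first claim, I would prove by induction on $i \geq 1$ that $|N_{G_1}(Q_i\setminus K) \setminus Q_i| < 2|Q_i\setminus K|$, which is equivalent, since $Q_i \supseteq K$, to $|N_{G_1}(Q_i\setminus K) \setminus K| < 2|Q_i\setminus K|$. The base case $i=1$ follows directly from Lemma \ref{lem:posa} or the FindSparse condition, and the inductive step combines the previous step with the single-increment bound $|N_{G_1}(Q_i\setminus Q_{i-1}) \setminus Q_{i-1}| < 2|Q_i\setminus Q_{i-1}|$ in the same way as in the proof of Lemma \ref{lem:auxrandupper}. Taking $i=r$ yields the claim.

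For the second claim, assume $\neg \cE_{exp}(G,F_0)$ and $\neg \cE_{0.01n}(G,F_0)$, so that $|K|<0.01n$. By induction on $i$ I would show $|Q_i\setminus K| \leq 0.02n$: the increment $|Q_i\setminus Q_{i-1}|$ is at most $\max\{|Q_{i-1}|,\,0.25n-1\}$, which gives $|Q_i\setminus K| < 0.27n$, and then the first claim combined with $\neg \cE_{exp}$ forces $|Q_i\setminus K| < 0.02n$. For \eqref{eq:tlower}, I would show $|S_{i+1}\setminus S_0| > 2|S_i\setminus S_0|$ for $1 \leq i < t$: once FindSparse inside CerHAM terminates on $S_i'$, no subset of $V(G)\setminus S_0$ of size at most $|S_i\setminus S_0|$ satisfies the non-expansion condition, yet $W_{i+1}$, being disjoint from $S_0$ and obtained from Lemma \ref{lem:posa}, does, so $|W_{i+1}| > |S_i\setminus S_0|$, giving $|S_{i+1}\setminus S_0| \geq |S_i\setminus S_0| + |W_{i+1}| > 2|S_i\setminus S_0|$. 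Iterating and taking logarithms yields \eqref{eq:tlower}.

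For \eqref{eq:Xilower}, if CerHAM exits the while-loop with $|S_t|<0.25n$ then line 15 has run $0.01n^2$ times. By Observation \ref{obs:coverandadjust}, the initial CoverAndAdjust that produces $\cP_0$ gives $|\cP_0| \leq n + 2|S_0|$, and each subsequent CoverAndAdjust contributes $Y_i \leq 2|S_i|$. Writing $|S_i| = |S_0| + |S_i\setminus S_0|$ and using the geometric decay $|S_i\setminus S_0| \leq 2^{-(t-i)}|S_t\setminus S_0|$, one gets $\sum_{i=1}^{t} Y_i \leq 2t|S_0| + 4|S_t\setminus S_0|$. Plugging into $0 < |\cP_0| + \sum Y_i - \sum X_j$ and using $2|S_0| + 4|S_t\setminus S_0| \leq 4|S_t|$ yields \eqref{eq:Xilower}. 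The main obstacle throughout is bookkeeping: separating the three phases of growth of $S$, verifying that $K$ is contained in every relevant avoided set so that Observation \ref{lem:posaendpoints} and the modified line 4 of FindSparse both keep increments disjoint from $K$, and translating the original doubling and size bounds to statements relative to $S_0$ or $K$ rather than $\emptyset$. Once the sequence $(Q_i)$ and the identity $N_{G_1}(S_t\setminus K)\setminus K = N_{G_1}(S_t\setminus K)\setminus S_t$ are in place, each claim reduces to a direct analogue of the corresponding argument in Lemma \ref{lem:auxrandupper}.
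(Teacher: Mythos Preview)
Your proposal is correct and follows essentially the same approach as the paper, which explicitly states that the proof is ``almost identical to the proof of Lemma~\ref{lem:auxrandupper}'' and only spells out the computation for \eqref{eq:Xilower}. Your treatment is in fact more careful than the paper's in one respect: you track that the starting FPP satisfies $|\cP_0|\le n+2|S_0|$ and then absorb the extra $2|S_0|$ via $2|S_0|+4|S_t\setminus S_0|\le 4|S_t|$, whereas the paper's displayed chain simply starts from $n$; both routes yield the stated bound.
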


\begin{proof}
The proof of Lemma \ref{lem:auxlower} is almost identical to the proof of Lemma \ref{lem:auxrandupper}. Here we only present only parts of the proof of \eqref{eq:Xilower}. 

Write $\cE_{exp}$ and $\cE_{0.01n}$ for the events $\cE_{exp}(G,F_0)$ and $\cE_{0.01n}(G,F_0)$ respectively.
As in  the proof of \eqref{eq:probrand} one has $2(|S_i|-|S_0|)=2|S_i\setminus S_0|< |S_{i+1}\setminus S_0|=|S_{i+1}|-|S_0|$ for $0\leq i<t$ and  
\begin{align*}
    0&< n+\sum_{i=1}^tY_i- \sum_{j=1}^{0.01n^2}X_i
    \leq  n +2 \sum_{i=1}^t |S_i| - \sum_{i=1}^{0.01n^2}X_i
    \leq  n +2t|S_0|+2 \sum_{i=1}^t (|S_i|-|S_0|) - \sum_{i=1}^{0.01n^2}X_i
    \\&\leq n+2t|S_0|+2 \sum_{i=1}^t2^{-i+t}(|S_t|-|S_0|) - \sum_{i=1}^{0.01n^2}X_i \leq n+2t|S_0|+4|S_t| - \sum_{i=1}^{0.01n^2}X_i.
\end{align*}
\end{proof}

\begin{lemma}\label{lem:coundmiddlerandom}
Let $G$ be a graph on $[n]$ and $F_0$ be an edge set on $[n]$ with the property $(G,F_0)\in \cR$. If the event $\neg \cE_{exp}(G,F_0)\land \neg \cE_{0.01n}(G,F_0)$ occurs then $\sum_{i=1}^{0.01n^2}X_i\leq 2n$.
\end{lemma}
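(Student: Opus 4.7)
The plan is to invoke Lemma \ref{lem:auxlower}, which supplies $\sum_{i=1}^{0.01n^2}X_i<2t|S_0|+4|S_t|+n$ whenever CerHAM exits its main while-loop with $|S_t|<0.25n$, and to upgrade this to the target $2n$ by proving $2t|S_0|+4|S_t|\le n$ in the event $\neg\cE_{exp}(G,F_0)\land\neg\cE_{0.01n}(G,F_0)$. (If CerHAM instead terminates early by outputting a Hamilton cycle then $\sum X_i\le n$ trivially.) From $\neg\cE_{0.01n}$ I read off $|K|<0.01n$ for $K=V_6(G_1)$, and Lemma \ref{lem:auxlower} supplies $|S_t\setminus K|\le 0.02n$, hence $|S_t|\le 0.03n$ and $4|S_t|\le 0.12n$. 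What remains is to bound $2t|S_0|$.

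I would then split according to the initialization of DCerHAM. If $|K|<n/(10\log_2 n)$ then no initial FindSparse was run, $S_0=K$, and $|S_0|\le n/(10\log_2 n)$; combining the geometric growth $|S_{i+1}\setminus S_0|>2|S_i\setminus S_0|$ (proved as in Lemma \ref{lem:auxlower}) with $|S_t\setminus S_0|\le 0.02n$ yields $t\le \log_2 n+O(1)$, so $2t|S_0|\le (1+o(1))n/5$, and the target follows.

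The delicate case is $|K|\ge n/(10\log_2 n)$, where an initial unmodified FindSparse has been executed and $S_0=K\cup S'$. Running the doubling argument from Lemma \ref{lem:auxlower} on the FindSparse sequence $Q_0=K,Q_1,\dots,Q_r=S_0$ shows that $|Q_i\setminus K|$ can at most double per step while maintaining $|N_{G_1}(Q_i\setminus K)\setminus K|<2|Q_i\setminus K|$; since such a set of size in $[0.02n,0.27n]$ would witness $\cE_{exp}$, and the doubling cannot jump over this interval (successor at most $2(|K|+0.02n)\le 0.06n$), we conclude $|S_0\setminus K|\le 0.02n$ and hence $|S_0|\le 0.03n$. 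The new ingredient is that termination of the initial FindSparse guarantees that no $W\subseteq[n]\setminus S_0$ with $|W|\le|S_0|$ satisfies $|N_{G_R}(W)\setminus S_0|<2|W|$, a property preserved as $G_R$ only grows. The first set $W_1$ produced by ReducePaths inside CerHAM lies in $[n]\setminus S_0$ and is non-expanding by Lemma \ref{lem:posa}, and so must satisfy $|W_1|>|S_0|$, giving $|S_1\setminus S_0|\ge|S_0|+1$. Feeding this into the geometric growth yields $t\le \log_2(0.02n/|S_0|)+1$, and elementary one-variable calculus on $x\mapsto x(\log_2(0.02n/x)+1)$ over $x\in[n/(10\log_2 n),0.03n]$ caps $2t|S_0|$ at roughly $0.05n$, so $2t|S_0|+4|S_t|+n<1.3n<2n$.

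The main obstacle is precisely this last step: the naive bound $t\le\log_2 n$ would give $2t|S_0|=\Omega(n\log n)$ and break the argument, so the tighter $t\le \log_2(0.02n/|S_0|)+1$, and hence the preliminary bound $|W_1|>|S_0|$, is essential. This is exactly why the algorithm inserts an initial FindSparse when $|K|\ge n/(10\log_2 n)$, fulfilling the ``technical reason'' noted by the paper for the case split.
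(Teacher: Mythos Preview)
Your proposal is correct and follows essentially the same route as the paper: invoke Lemma~\ref{lem:auxlower} to get $\sum X_i<2t|S_0|+4|S_t|+n$, then split on whether $|K|<n/(10\log_2 n)$, using in the large-$|K|$ case that the preliminary FindSparse forces $|W_1|>|S_0|$ and hence $t\le\log_2(0.02n/|S_0|)+O(1)$, after which a one-variable optimization finishes. If anything you are slightly more careful than the paper---you explicitly justify $|S_0|\le 0.03n$ via the doubling-cannot-jump argument and retain the ``$+1$'' from \eqref{eq:tlower}, both of which the paper glosses over; your only loose end is the ``trivially $\sum X_i\le n$'' claim when CerHAM halts without exhausting the loop, but the same $0\le|\cP_0|+\sum Y_i-\sum X_i$ inequality covers that case as well.
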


\begin{proof}
Write  $\cE_{exp}$ and $\cE_{0.01n}$  for the events  $ \cE_{exp}(G,F_0)$ and $\cE_{0.01n}(G,F_0)$ respectively. Let $S_0,S_1,...,S_t$ be as in the previous lemma, $G_1=([n], E(G)\cap F_0)$ and $K=V_{6}(G_1)$. 

First assume that the event $\neg \cE_{exp}\land \neg \cE_{0.01n}  \land \set{|K| < \frac{n}{10\log_2 n}}$ occurs. Then $K=S_0$. In addition, by Lemma \ref{lem:auxlower}, we have that in the event $\neg \cE_{exp}\land \neg \cE_{0.01n}$  the set $S_t$ has size at most $0.02n $ and $t\leq \log_2 |S_t|+1$. Hence \eqref{eq:Xilower} gives,
$$\sum_{i=1}^{0.01n^2} X_i < 2t|S_0|+4|S_t|+n \leq 2(\log_2n +1)\cdot \frac{n}{10\log_2 n}+0.08n +n\leq 2n.$$
On the other hand, if the event $\neg \cE_{exp}\land \neg \cE_{0.01n}  \land \set{|K| \geq \frac{n}{10\log_2 n}}$ occurs then the algorithm FindSparse($G,G_1,E(G_1),K)$ is executed. Upon termination of FindSparse($G,G_1,E(G_1),K)$, $S=S_0$ and there does not exists a subset $Q$ of $[n]\setminus S_0$ of size at most $|S_0|$ with the property that $|N_{G_1}(Q)\setminus S_0| <2|Q\setminus S_0|$. Since $W_1$ has this property we have that $|W_1|\geq |S_0|$ and $|S_1|-|S_0|= |S_1\setminus S_0|\geq |W_1|\geq |S_0|$. \eqref{eq:tlower}  implies that 
$$t\leq \log_2 \bfrac{|S_t\setminus S_0|}{|S_1\setminus S_0|} \leq \log_2\bfrac{|S_t|}{|S_0|} \leq \log_2\bfrac{0.02n}{|S_0|}.$$ Recall that $|S_0|\leq |S_t|\leq 0.02n$ in the event $\neg \cE_{exp}\land \neg \cE_{0.01n}$, thus combining the above inequality with \eqref{eq:Xilower} we get, $$\sum_{i=1}^{0.01n^2}X_i<2n\cdot \max\set{x\log_2\bfrac{0.02}{x} :{0\leq x\leq 0.02}}+4\cdot 0.02n+n<2n.$$
\end{proof}

For a graph $G$ on $[n]$ and an edge set $F_0$, let $T(G,F_0)$ be the running time of DCerHAM$(G,F_0)$. In addition  let $T_B(G,F_0)$ be the portion of the running time of DCerHAM spend on executing FindSparse, executing  CoverAndAdjust$(G_R,S_i,\cP)$ for pairs $(G_R, S_i)$ such that the maximum subset $C$ of $S_i$ with the property that $C\cup N_{G_R}(C)$ is connected in $G_R$ has size at least $\log n/10$, and executing line $22$ of CerHAM. Finally, we let $T_A(G,F_0)=T(G,F_0)-T_B(G,F_0)$. Recall, given a set of edges $F_0$ and a graph $G$ on $[n]$ the event $\cE_{count}(G,F_0)$ is defined by,
$$\cE_{count}(G,F_0)=\{ count \text{ reaches  the value of } 0.01n^2\text{ at the execution of CerHAM($G,F_0$)}\}.$$
In place of Lemma \ref{lem:upperT} we have the following one.

\begin{lemma}\label{lem:middleT} 
Let $G$ be a graph on $[n]$ and $F_0$ be an edge set on $[n]$ with the property $(G,F_0)\in \cR$.  Then $$T_A(G,F_0)=O(n^7)$$ and
\begin{align}\label{eq:middleTB}
\mathbb{E}(T_B(G,F_0))=  
   O\bigg(   &n^72^n\Pr(\cE_{exp}(G,F_0)\lor \cE_{count}(G,F_0) \lor \cE_{0.01n}(G,F_0))
 \\& +\sum_{i=\frac{n}{10\log_2 n}}^{0.01n}i^4\binom{n}{i} \Pr\big(|K|=i)+\sum_{i=0}^{0.02n }  i^4 \binom{n}{i}\Pr(\cA_i(G,F_0) ) \nonumber
   \\&+\sum_{i=\frac{\log n}{10}}^{0.02n }n^2i^62^{3i} \Pr(\cA_i(G,F_0))  +\sum_{i=\frac{\log n}{10}}^{0.03n }n^2i^62^{3i} \Pr(\cB_i(G,F_0)) \bigg) \nonumber. 
\end{align}
\end{lemma}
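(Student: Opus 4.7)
The plan is to parallel the proof of Lemma \ref{lem:upperT}, adding care for two features of the middle-range algorithm: the initial augmentation of $S_0$ by $K = V_6(G_1)$ via an initial FindSparse call (invoked when $|K| \geq n/(10\log_2 n)$), and the refined cost accounting for each CoverAndAdjust call based on the size of the largest subset $C$ of $S_i$ with $C \cup N_{G_R}(C)$ connected in $G_R$.

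First I would bound $T_A$ deterministically. Every pass through the main while-loop of CerHAM costs $O(n^3)$ for the ReducePaths call and its postprocessing; the loop runs at most $0.01n^2 + t = O(n^2)$ times. Each small CoverAndAdjust call (largest connected subset of $S_i$ having size $<\log n/10$) runs in $O(n \cdot (\log n)^6 \cdot 2^{3\log n/10}) = n^{1+o(1)}$ time by the end-of-Section-2 analysis, and there are at most $O(n)$ such calls. Hence $T_A = O(n^5) \subseteq O(n^7)$.

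For $\E(T_B)$, I would split on the event $\cE_{\mathrm{bad}} := \cE_{exp}(G,F_0) \lor \cE_{count}(G,F_0) \lor \cE_{0.01n}(G,F_0)$. Observe that the total running time of DCerHAM is always $O(n^7 2^n)$, since the Inclusion-Exclusion HAM on line 22 dominates at $O(n^6 2^n)$ and each individual FindSparse or CoverAndAdjust call costs at most $O(n^6 2^n)$. On $\cE_{\mathrm{bad}}$ this worst-case bound contributes $O(n^7 2^n)\Pr(\cE_{\mathrm{bad}})$, which matches the first term of \eqref{eq:middleTB}.

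Conditioning on $\neg \cE_{\mathrm{bad}}$, Lemma \ref{lem:auxlower} will give $|S_t \setminus K| \leq 0.02n$ and guarantee that the while-loop exits without invoking line 22. I would then decompose the remaining $T_B$-cost into three pieces. First, the initial FindSparse on $K$ runs only when $|K| \geq n/(10\log_2 n)$ and, by Observation \ref{obs:runtimeFindSparse}, costs $O(|K|^3 \binom{n}{|K|})$; summing over $|K| = i \in [n/(10\log_2 n), 0.01n]$ (the upper bound coming from $\neg \cE_{0.01n}$) yields the second term. Second, each of the at most $t$ FindSparse calls inside CerHAM outputs a set $S_i$ of size up to $|S_0| + 0.02n$ and costs $O(i^3 \binom{n}{i})$; the output with $|S_i \setminus K| = i$ witnesses $\cA_i(G,F_0)$ via $W = S_i \setminus K$ and the non-expansion conclusion of Lemma \ref{lem:auxlower}, so a union bound over $i$ (absorbing the at most $t \leq i$ calls per value of $i$) produces $\sum_i i^4 \binom{n}{i}\Pr(\cA_i)$, the third term. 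Third, each large CoverAndAdjust call (with largest connected subset $C$ of size $j \geq \log n/10$) costs $O(n j^6 2^{3j})$; with at most $O(n)$ such calls and $C$ witnessing $\cB_j(G,F_0)$, the total is bounded by $\sum_j n^2 j^6 2^{3j} \Pr(\cB_j)$, the fourth term. Adding all contributions gives \eqref{eq:middleTB}.

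The main obstacle will be the last piece: verifying that a large CoverAndAdjust call genuinely triggers $\cB_j(G,F_0)$ in $G_1 = G \cap G_{F_0}$ rather than only in $G_R$. The $G$-connectivity of $W \cup N_G(W)$ follows from the $G_R$-connectivity of $C \cup N_{G_R}(C)$ together with $G_R \subseteq G$; the bound $|N_{G_1}(W)|\leq 7|W|$ requires splitting $W$ into $W\cap K$ (each such vertex has at most five $G_1$-neighbors by definition of $V_6$) and $W\setminus K \subseteq S_t \setminus K$ (whose $G_1$-neighborhood outside $K$ is controlled by Lemma \ref{lem:auxlower}). Choosing $W$ correctly (either $C$ itself or a slightly enlarged subset of $S_i \cup N_{G_R}(S_i)$) so that both conditions hold simultaneously is the central technical task.
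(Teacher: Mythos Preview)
Your outline follows the paper's approach closely and the $T_A$ bound and the first three terms of \eqref{eq:middleTB} are handled correctly. There is, however, a real gap in your treatment of the large CoverAndAdjust calls, and it is exactly the obstacle you flag but do not resolve.

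You claim that a maximal connected subset $C\subseteq S_i$ of size $j$ witnesses $\cB_j$. For the $K$-part $C_1=C\cap K$ you write that each vertex has ``at most five $G_1$-neighbors by definition of $V_6$''; this is not quite right --- a vertex of $K$ may have many $G_1$-neighbors inside $K$. What is true is that each vertex of $K$ has at most five $G_1$-neighbors in the $6$-core $[n]\setminus K$. To turn this into $|N_{G_1}(C_1)|\le 5|C_1|$ the paper uses the \emph{maximality} of $C$ together with $K\subseteq S_0\subseteq S_i$: any vertex of $K\setminus C_1$ adjacent to $C$ would lie in $S_i$ and could be appended to $C$ while preserving connectivity, so no such vertex exists and hence $N_{G_1}(C_1)\subseteq [n]\setminus K$. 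The more serious issue is the $C_2=C\setminus K$ part. Lemma~\ref{lem:auxlower} gives $|N_{G_1}(S_t\setminus K)\setminus K|<2|S_t\setminus K|$, a bound in terms of $|S_t\setminus K|$, not $|C_2|$ or $j$. If $|S_t\setminus K|$ is much larger than $j$ you cannot conclude $|N_{G_1}(C)|\le 7j$, and no clever choice of $W$ repairs this.

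The paper's fix is a case split, not a different choice of $W$. One argues: either $\cA_j$ holds, or it does not. If $\neg\cA_j$ holds then, since $S_t\setminus K$ is itself a non-expanding subset of $[n]\setminus K$ of size at most $0.02n$ (Lemma~\ref{lem:auxlower}), it would witness $\cA_{|S_t\setminus K|}$; hence $|S_t\setminus K|<j$. Now $|N_{G_1}(C_2)|\le |N_{G_1}(S_t\setminus K)|<2|S_t\setminus K|<2j$, and together with $|N_{G_1}(C_1)|\le 5|C_1|\le 5j$ one gets $|N_{G_1}(C)|\le 7j$, i.e.\ $\cB_j$. This dichotomy is precisely why the statement of the lemma carries \emph{two} CoverAndAdjust sums, $\sum n^2 i^6 2^{3i}\Pr(\cA_i)$ and $\sum n^2 i^6 2^{3i}\Pr(\cB_i)$, whereas your outline produces only the $\cB_j$ sum.
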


\begin{proof}
Recall $K=V_{6}(G\cap G_{F_0})$. Write  $\cE_{exp}, \cE_{0.01n},  \cE_{count},A_i$ and $B_i$ for the events  $ \cE_{exp}(G,F_0),$ $\cE_{0.01n}(G,$ $F_0),$ $\cE_{count}(G,$ $F_0),$ $\cA_i(G,F_0)$ and $\cB_i(G,F_0)$ respectively. Let $G_1=([n],E(G)\cap F_0)$. 

$T_A(G,F_0)$ accounts for the running time spent on identifying $K$ (this can be done in $O(n^2)$ time),
running CerHAM modulo lines $7$,$8$ and $22$ (this can be done in $O(n^7)$  time - see the proof of Lemma \ref{lem:upperT}), and executing  CoverAndAdjust$(G_R,S_i,\cP)$ for pairs $(G_R, S_i)$ such that the maximum subset $C$ of $S_i$ with the property that $C\cup N_{G_R}(C)$ is connected in $G_R$ has size less than $\log n/10$. CoverAndAdjust may be executed at most $t+1\leq n+1$ times. At each execution, at most $n$ components that intersect the set $S_i$ in at most $\log n/10$ vertices are considered. Thus each iteration takes $O(n 2^{0.6\log n} \log^6 n )=O(n^2)$ (see last paragraph of Section $2$). Therefore, $T_A(G,F_0)=O(n^7).$

For estimating $\mathbb{E}(T_B(G,F_0))$, first recall from Lemma \ref{lem:upperT} that $T_B(G,F_0)=O(n^72^n).$ Therefore, the first line of \eqref{eq:middleTB} accounts for $\mathbb{E}(T_B\mathbb{I}(\cE_{exp}(G,F_0)\lor \cE_{count}(G,F_0) \lor \cE_{0.01n}(G,F_0)))$.
If the event $\cE_{exp}(G,F_0)\lor \cE_{count}(G,F_0) \lor \cE_{0.01n}(G,F_0)$ does not occur then $|K|\leq 0.01n$ and $|S_t\setminus K|\leq 0.02n$. The second line of \eqref{eq:middleTB} accounts for the running time of FindSparse in the events $n/(10\log_2 n)\leq |K|\leq 0.01n$ and $|S_t\setminus K|\leq 0.02n$ respectively. Finally, the last line accounts for the running time of CoverAndAdjust$(G_R,S_j,\cP)$ for pairs $(G_R, S_i)$ such that the maximum subset $C$ of $S_j$ with the property that $C\cup N_{G_R}(C)$ is connected in $G_R$ has size at least $\log n/10$. Let $C$ be a maximum such set over the pairs $(G_R,S_j)$ and $|C|=i$. Then CoverAndAdjust may be executed at most $n+1$ times, and at each execution at most $n$ components are considered; this yield a running time of $n^2i^62^{3i}$. Now either $\cA_i$ occurs or $\neg \cA_i$. If $\neg \cA_i$ occurs we may partition $C$ into the sets $C_1=C\cap V_6(G_1)$ and $C_2=C\setminus C_1$. Observe that due to the maximality of $C$ and the fact that $K\subseteq S_0$ we have that no vertex in $K\setminus C_1$ is adjacent to $C$, thus $|N_{G_1}(C_1)|\leq 5|C_1|$. In addition $C_2\subseteq S_t\setminus K$. As $\neg \cA_i$ occurs the set $S_t\setminus K$ has size less than $i$. Therefore,  $|N_{G_1}(C_2)|\leq |N_{G_1}(S_t\setminus K)|< 2|S_t\setminus K|\leq 2i$. Hence $|N_{G_1}(C)|\leq |N_{G_1}(C_1)|+|N_{G_1}(C_2)|\leq 5|C_1|+ 2i\leq 5|C|+2i=7i$, i.e the event $\cB_i(G,F_0)$ occurs. 
\end{proof}

\begin{lemma}\label{lem:eq:middle}
 Let $G\sim G(n,p)$,  $G'=H_n$ and $F_0=E(H_n)$. Then, $\mathbb{E}(T_B(G,F_0))=O(n)$ for $\frac{500\log \log n}{n} \leq p\leq \frac{100\log n}{n}$.
\end{lemma}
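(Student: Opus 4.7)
The plan is to substitute the probability bounds from Lemma \ref{lem:color} into the decomposition \eqref{eq:middleTB} provided by Lemma \ref{lem:middleT}, and verify that each of the five summands on the right hand side is $O(n)$. Throughout, write $\cE_{exp}, \cE_{count}, \cE_s, \cA_i, \cB_j$ for the events with argument $(G,F_0)$. Note $(G,F_0)\in\cR$ by Lemma \ref{lem:pseudo}, so Lemma \ref{lem:middleT} applies.

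First I would deal with the $n^72^n\Pr(\cE_{exp}\lor \cE_{count}\lor \cE_{0.01n})$ term. Lemma \ref{lem:color} immediately gives $\Pr(\cE_{exp}\lor\cE_{0.01n})\leq 2^{-1.1n}$, contributing $o(1)$. To handle $\cE_{count}\setminus(\cE_{exp}\cup\cE_{0.01n})$, invoke Lemma \ref{lem:coundmiddlerandom} to see that on this event $\sum_{i=1}^{0.01n^2}X_i\leq 2n$. But (as in the proof of Lemma \ref{lem:uppercalc}) the $X_i$ are independent Bernoullis of mean $p$ since each corresponds to an edge outside $F_0\supseteq E(H_n)$; so $\sum X_i$ stochastically dominates $\mathrm{Bin}(0.01n^2,p)$ with mean $\geq 5n\log\log n$ in this range. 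A Chernoff lower bound then yields $\Pr(\cE_{count}\setminus(\cE_{exp}\cup\cE_{0.01n}))\leq e^{-\Omega(n\log\log n)}=o(n^{-7}2^{-n})$.

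Next, for the $6$-core term $\sum_{i=n/(10\log_2 n)}^{0.01n}i^4\binom{n}{i}\Pr(|K|=i)$, I would use $\Pr(|K|=i)\leq \Pr(\cE_i)$ together with the first bound in \eqref{eq:4core} and $\binom{n}{i}\leq (en/i)^i$. The $i$-th term is then bounded by $i^4\bigl((en/i)^2 (0.02 e np)^5 e^{-0.09np}\bigr)^i$. Using $500\log\log n\leq np\leq 100\log n$, the factor $e^{-0.09np}$ dominates both the polynomial-in-$\log n$ factors $(en/i)^{2}$ (since $en/i=O(\log n)$ throughout the range) and $(np)^{5}$, leaving an exponent that is $\leq -\Omega(i\log\log n)$. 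Summing from $i\geq n/(10\log_2 n)$ gives an $o(1)$ contribution. The remaining two terms involving $\cA_i$ are treated using the first bound in \eqref{eq:big_s}: directly, $\sum_{i=0}^{0.02n}i^4\binom{n}{i}\Pr(\cA_i)=\sum_{i=0}^{0.02n}O(1)=O(n)$; and for the CoverAndAdjust sum $\sum_{i\geq \log n/10}n^2 i^6 2^{3i}\Pr(\cA_i)$, rewrite $\Pr(\cA_i)=O\bigl(1/(i^4\binom{n}{i})\bigr)$ so that each term is at most $n^2 i^2 2^{3i}/\binom{n}{i}\leq n^2 i^2 (8i/n)^{i}$, which for $i\geq \log n/10$ decays super-polynomially in $n$ and sums to $o(1)$.

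The last term $\sum_{j\geq \log n/10}^{0.03n}n^2 j^6 2^{3j}\Pr(\cB_j)$ is controlled by the second bound in \eqref{eq:big_s}, $j^7 2^{3j}\Pr(\cB_j)\leq e^{-100j}$, so each summand is $\leq n^2 e^{-100j}/j$, and starting from $j\geq \log n/10$ the geometric series is dominated by $n^2\cdot n^{-10}=o(1)$. Adding the five contributions gives $\mathbb{E}(T_B(G,F_0))=O(n)$. The main obstacle I expect is Step~1 on $\cE_{count}$: one has to verify that the Bernoulli independence of the $X_i$ survives across iterations and that the threshold $p\geq 500\log\log n/n$ is exactly what is needed so that the Chernoff tail beats the $n^72^n$ worst-case factor; the remaining sums are arithmetic once the hypotheses of Lemma \ref{lem:color} are invoked.
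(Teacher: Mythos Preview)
Your proposal is correct and follows essentially the same route as the paper: verify $(G,F_0)\in\cR$ via Lemma~\ref{lem:pseudo}, bound $\Pr(\cE_{exp}\lor\cE_{0.01n})$ by \eqref{eq:4core}, handle $\cE_{count}$ via Lemma~\ref{lem:coundmiddlerandom} plus Chernoff, bound the $|K|$ sum using the $\cE_s$ estimate in \eqref{eq:4core}, and control the $\cA_i$ and $\cB_j$ sums with \eqref{eq:big_s}. The only cosmetic difference is that for the CoverAndAdjust sum $\sum n^2 i^6 2^{3i}\Pr(\cA_i)$ the paper invokes the inequality $n^2 i^6 2^{3i}\le i^4\binom{n}{i}$ to fold it into the preceding $\cA_i$ sum, whereas you rewrite $\Pr(\cA_i)=O\big(1/(i^4\binom{n}{i})\big)$ and bound directly; these are the same manipulation. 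One small caveat worth being aware of: Lemma~\ref{lem:color} is stated for $p\le 500\log\log n/n$, so strictly speaking you (and the paper, which cites \eqref{eq:4core} and \eqref{eq:big_s} in the same way) are using those estimates outside the lemma's stated range---the underlying proofs only use the lower bound on $p$, so the extension is harmless, but it is an implicit step.
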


\begin{proof}
Write  $\cE_{exp}, \cE_{0.01n},  \cE_{count},A_i$ and $B_i$ for the events  $ \cE_{exp}(G,F_0), \cE_{0.01n}(G,$ $F_0),$ $\cE_{count}(G,$ $F_0),$ $\cA_i(G,F_0)$ and $\cB_i(G,F_0)$ respectively. In addition, let $G_1$ and $K$ be as at the beginning of this subsection.  Lemma \ref{lem:pseudo} implies that $(G,F_0)\in \cR$. Thus it suffices to verify that the expression given by \eqref{eq:middleTB} is equal to $O(n)$. This will follow from equations \eqref{eq:big_s}, \eqref{eq:001} and \eqref{eq:002}. 

Recall \eqref{eq:4core} states, $\Pr(\cE_{exp}\lor  \cE_{0.01n})=O(2^{-1.1n}).$ Thereafter Lemma \ref{lem:coundmiddlerandom} states that in the event  $\neg \cE_{exp}\land \neg \cE_{0.01n}$ one has $\sum_{i=1}^{0.01n^2}X_i\leq 2n$. As $X_i=1$ with probability $p$ independently of $X_1,...,X_{i-1}$ the Chernoff bounds give,
\begin{align}\label{eq:001}
     \Pr(\cE_{exp}\lor  \cE_{0.01n} \lor \cE_{count})
     &\leq \Pr(\cE_{exp}\lor  \cE_{0.01n})+
     \Pr(\neg \cE_{exp}\land \neg  \cE_{0.01n} \land \cE_{count})
    \nonumber \\&\leq O(2^{-1.1n})+ \Pr(Bin(0.01n^2,p)\leq 2n )=O(2^{-1.1n}).
\end{align}
Using \eqref{eq:4core}, for $p\geq (500\log\log n)/n$, 
\begin{align}\label{eq:002}
 \sum_{i=\frac{n}{10\log_2 n}}^{0.01n} i^4\binom{n}{i} \Pr\big(|K|=i)   &\leq \sum_{i=\frac{n}{10\log_2 n}}^{0.01n} i^4\bfrac{en}{i}^i\bigg(\bfrac{en}{i} \bfrac{0.1enp}{5}^5 e^{-0.09np}  \bigg)^i \nonumber
\\&\leq \sum_{i=\frac{n}{10\log_2 n}}^{0.01n} i^4 \bigg( \bfrac{n}{i}^2 (0.1np)^5 e^{-0.09pn} \bigg)^i \nonumber
\\&\leq \sum_{i=\frac{n}{10\log_2 n}}^{0.01n} i^4 \bigg( (10\log_2 n)^2 (50\log\log n)^5 e^{-45\log\log n}\bigg)^i=o(1).
\end{align}

Equations \eqref{eq:middleTB}, \eqref{eq:big_s}, \eqref{eq:001}, \eqref{eq:002} and the inequality $n^2i^62^{3i}\leq i^4\binom{n}{i}$ for $(\log n)/10\leq i\leq 0.01n$ imply that $T_B(G, F_0)=O(n)$ for $ \frac{500\log\log n}{n} \leq p\leq \frac{100\log n}{n}$.
\end{proof}

\textbf{Proof of Theorem \ref{thm:main} for $ \frac{500\log\log n}{n}\leq p\leq \frac{100\log n}{n}$:} At the execution of DCerHAM($G$), Lemma \ref{lem:pseudo} implies that $(G,F_0)\in \mathcal{R}$.  Then, the decomposition of the expected running time of DCerHAM($G$) follows from lemmas \ref{lem:middleT} and \ref{lem:eq:middle}.
\qed

\section{The DCerHAM algorithm in the sparse regime}

\subsection{Minimum degree 2}\label{subsec:min2}
De facto, if $G\sim G(n,p)$ does not have minimum degree 2 then $G$ is not Hamiltonian. Such a feature can be detected in $O(n^2)$ time. In the event that $\delta(G(n,p))\geq 2$ we can initialize DCerHam by implementing an algorithm for prepossessing $G$ that runs in time inversely proportional to $\Pr(\delta(G)\geq 2)$.  Thus it becomes crucial to upper bound $\Pr(\delta(G)\geq 2)$. This can be done using the following lemma. Its proof is located at Appendix \ref{subsec:app:min2}. We let $G(n,m)$ be the uniform random graph model. That is for $m,n\in \mathbb{N}$ if $G\sim G(n,m)$ then $G$ is distributed uniformly over all the graphs on $[n]$ with $m$ edges. 
\begin{lemma}\label{lem:degree}
Let $m=O(n\log\log n)$ and $G\sim G(n,m)$. Then,
\begin{equation}\label{eq:mindegree}
    \Pr(\delta(G_{n,m})\geq 2) \leq ne^{- 2m  e^{-\frac{2m}{n}}}.
\end{equation}
\end{lemma}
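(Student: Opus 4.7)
The plan is to upper-bound $\Pr(\delta(G_{n,m})\ge 2)$ via the probability that $G_{n,m}$ contains no degree-one vertex. Let $X_1(G)$ denote the number of vertices of degree exactly $1$ in $G$; then $\{\delta(G)\ge 2\}\subseteq\{X_1(G)=0\}$, so it suffices to bound $\Pr(X_1(G_{n,m})=0)$ by $n\exp(-2m\,e^{-2m/n})$.

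The first step is a moment computation. For any $v\in[n]$,
\[
\Pr(\deg_{G_{n,m}}(v)=1)=(n-1)\,\frac{\binom{\binom{n-1}{2}}{m-1}}{\binom{\binom{n}{2}}{m}},
\]
and estimating this ratio of binomial coefficients term-by-term (using $\log(1-x)\le -x$) yields $\Pr(\deg(v)=1)=(1+o(1))\,\tfrac{2m}{n}\exp(-2m/n)$ in the regime $m=O(n\log\log n)$. Summing over $v$ gives $\lambda:=\mathbb{E}[X_1]=(1+o(1))\cdot 2m\,e^{-2m/n}$, so the exponent in the target bound is essentially $-\lambda$.

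The second step is a Poisson-type tail estimate for $\Pr(X_1=0)$. I would pass to the binomial model $G(n,p)$ with $p=m/\binom{n}{2}$: since $\{\delta\ge 2\}$ is increasing and the binomial $|E(G(n,p))|$ has median at least $m$, one has $\Pr_{G_{n,m}}(\delta\ge 2)\le O(1)\cdot\Pr_{G(n,p)}(\delta\ge 2)$. In $G(n,p)$ the indicators $I_v=\mathbb{I}[\deg(v)=1]$ are only weakly dependent: each $I_v$ is determined by the $n-1$ edges incident to $v$, and two distinct $I_v,I_u$ share a single common edge $uv$. A Chen--Stein Poisson approximation in this local-dependency setting gives
\[
\bigl|\Pr(X_1=0)-e^{-\lambda}\bigr|\le O(\lambda^2/n),
\]
and since $\lambda\le 2m=O(n\log\log n)$ in the regime considered, the error is comfortably absorbed by the factor $n$ in the claimed bound.

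The main obstacle is the non-monotonicity in Step~2: the event $\{\deg(v)=1\}$ is neither increasing nor decreasing in the edge set, so neither FKG nor the standard Janson inequality applies directly. I would circumvent this by either (i) passing to the monotone proxy $\{\deg(v)\le 1\}$ (whose expected count exceeds $\lambda$ by only a $(1+o(1))$ factor in this regime) and applying a Janson-type upper bound for the event that no such vertex exists, or (ii) performing a direct factorial-moment computation: distinct degree-one vertices $v_1,\ldots,v_k$ contribute pairwise-disjoint "pendant" edges except when some $v_i$ is pendant to another $v_j$, and this near-independence yields $\mathbb{E}[X_1^{(k)}]\le (1+o(1))^k\lambda^k$, from which $\Pr(X_1=0)=\sum_k (-\lambda)^k/k!+o(1)\le e^{-\lambda}(1+o(1))$ follows by Bonferroni. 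Either route yields the exponential $e^{-\lambda}$ up to a polynomial slack that is absorbed by the factor $n$, completing the proof.
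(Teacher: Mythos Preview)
The paper takes a completely different and much shorter route. It generates $G(n,m)$ through a configuration-type model: let $X_1,\ldots,X_n$ be i.i.d.\ $\mathrm{Poisson}(2m/n)$, and form a random multigraph $G_A$ by taking a uniformly random perfect matching on a multiset containing $X_i$ copies of each $v_i$. Conditioned on $\{\sum_i X_i=2m\}\cap\{G_A\text{ simple}\}$, the graph $G_A$ is distributed exactly as $G(n,m)$, and this conditioning event has probability at least $1/n$ when $m=O(n\log\log n)$. Since $\{\delta(G_A)\ge 2\}=\{\min_i X_i\ge 2\}$ and the $X_i$ are \emph{genuinely} independent,
\[
\Pr(\delta(G_{n,m})\ge 2)\;\le\; n\,\Pr(\min_i X_i\ge 2)\;\le\; n\Bigl(1-\tfrac{2m}{n}e^{-2m/n}\Bigr)^n\;\le\; n\,e^{-2m e^{-2m/n}}.
\]
The factor $n$ is exactly the cost of the conditioning, not slack to absorb approximation errors.

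Your approach has a concrete gap in the Chen--Stein step. Each indicator $I_v=\mathbb{I}[\deg(v)=1]$ depends on all $n-1$ edges at $v$, so \emph{every} pair $I_u,I_v$ shares the edge $uv$; the dependency neighbourhood of each vertex is therefore all of $[n]$, giving $b_1\approx\lambda^2$ rather than $\lambda^2/n$, and Chen--Stein yields only $|\Pr(X_1{=}0)-e^{-\lambda}|=O(\min(\lambda,\lambda^2))$. In the regime $m=\Theta(n)$ one has $\lambda=\Theta(n)$ while the target is $e^{-\Theta(n)}$, so this is hopeless. Your Janson suggestion also fails: $\{\deg(v)\le 1\}$ is a decreasing event, not of the up-set form Janson requires, and FKG applied to the increasing complements $\{\deg(v)\ge 2\}$ gives only the lower bound $\Pr(\delta\ge 2)\ge\prod_v\Pr(\deg(v)\ge 2)$. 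The Bonferroni route is not obviously doomed, but for $\lambda=\Theta(n)$ you would have to carry inclusion--exclusion out to order $k\gg\lambda$ with uniform multiplicative control of every factorial moment --- essentially recreating by hand the independence that the configuration-model argument delivers in one line.
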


\subsection{Analysis of DCerHam - the sparse regime}\label{subsec:alg:lower}
Given a graph $G$ and a set of edges $F_0$ such that $(G,F_0)\in \cR$ the DCerHAM algorithm executes the following steps.  First, it checks if the minimum degree of $G$ is $2$. If it is not, then it returns that $G$ is not Hamiltonian. Else it lets $G_1=([n],F_0\cap E(G))$, $c=\max\{2|E(G)|/n,400\}$ and generates $K=V_{6}(G_1)$. Thereafter if $|K| \leq \frac{10n}{c^2}$, then it lets $w=e^{-2c}n$; else it lets $w=c^{-2}n$. Then it runs 
FindSparse($G[V(G)\setminus K],G_1[V(G)\setminus K],E(G_1),\emptyset$) with the twist that at line $4$ it lets $\cQ$ be the set of subsets of $V(G)\setminus S$ of size at most $\max\{w,|S|\}$ and lets $G_R',F',S=FindSparse(G[V(G)\setminus K],G_1[V(G)\setminus K],E(G_1),\emptyset)$. It sets $S_0=K\cup S$ and implements CoverAndAdjust and finds an FPP $\cP_0$ that covers $S_0$ or outputs that $G$ is not Hamiltonian. Finally, 
it continues and executes the CerHAM algorithm with the two alterations given in the previous section. 
 
Let $\cE_K$ be the event that $K>\frac{10n}{c}$.
In place of Lemma \ref{lem:coundmiddlerandom} we have the following one.

\begin{lemma}\label{lem:countlowerroand}
Let $G$ be a graph on $[n]$ and $F_0$ be an edge set on $[n]$ with the property $(G,F_0)\in \cR$.   If the event $\neg \cE_{exp}(G,F_0)\land \neg \cE_{0.01n}(G,F_0) \land \neg \cE_K$ occurs then $\sum_{i=1}^{0.01n^2}X_i\leq 2n$.
\end{lemma}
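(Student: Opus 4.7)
The plan is to reuse the decomposition from Lemma \ref{lem:auxlower}, namely $\sum_{i=1}^{0.01n^2}X_i<2t|S_0|+4|S_t|+n$, and to bound each term separately, mirroring the structure of the proof of Lemma \ref{lem:coundmiddlerandom} but with the new choice of threshold $w$ and the modified initial FindSparse playing the role that the bare case-split on $|K|$ played there. The overall goal is to show $2t|S_0|+4|S_t|\leq n$.

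First I would handle $|S_t|$. Lemma \ref{lem:auxlower}, together with $\neg\cE_{exp}(G,F_0)\land\neg\cE_{0.01n}(G,F_0)$, gives $|S_t\setminus K|\leq 0.02n$. Combining this with $\neg\cE_K$, i.e.\ $|K|\leq 10n/c$, yields $|S_t|\leq 0.02n+10n/c$, which (using $c\geq 400$) is comfortably below $0.05n$, so $4|S_t|$ contributes only a small fraction of $n$. Also $|S_t\setminus S_0|\leq|S_t\setminus K|\leq 0.02n$ since $K\subseteq S_0$.

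Next I would bound $t$ via \eqref{eq:tlower}: $t\leq\log_2(|S_t\setminus S_0|/|S_1\setminus S_0|)+1$. The key input from the algorithm in the sparse regime is that the initial call to FindSparse on $G[V\setminus K]$, modified to consider subsets of size at most $\max\{w,|S|\}$, terminates with $S_{\text{FS}}$ having the property that no subset of $V\setminus K\setminus S_{\text{FS}}$ of size $\leq\max\{w,|S_{\text{FS}}|\}$ is non-expanding (in the sense checked by FindSparse). Therefore the first non-expanding set $W_1$ produced by ReducePaths inside CerHAM, had it not grown beyond $w$, would have already been absorbed by the initial FindSparse; consequently $|S_1\setminus S_0|\geq|W_1|>w$, and thus $t\leq\log_2(0.02n/w)+1$.

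Then I would split into the two algorithmic branches to combine these estimates with a bound on $|S_0|=|K|+|S_{\text{FS}}|$. In the branch $|K|\leq 10n/c^2$ with $w=e^{-2c}n$, the bound $t=O(c)$ is balanced by the fact that both $|K|\leq 10n/c^2$ is tiny and $|S_{\text{FS}}|$ is forced to remain at the same order as $w$ or $|K|$ (by the doubling behaviour of FindSparse and the non-existence of large non-expanding sets guaranteed by $\neg\cE_{exp}\land\neg\cE_{0.01n}$). In the branch $10n/c^2<|K|\leq 10n/c$ with $w=n/c^2$, one has $t=O(\log c)$ and $|S_0|=O(n/c)$, so $2t|S_0|=O(n\log(c)/c)$. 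In both branches the calibration of $w$ is tuned so that $2t|S_0|+4|S_t|\leq n$, yielding $\sum X_i\leq 2n$.

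The main obstacle is the first branch: controlling $|S_{\text{FS}}|$ tightly enough so that the large factor $t=O(c)$ coming from the minuscule $w=e^{-2c}n$ is absorbed. The argument has to exploit that the initial FindSparse can only grow $S$ through small non-expanding subsets (size $\leq\max\{w,|S|\}$), and that under the assumed events no such chain of doublings can push $|S_{\text{FS}}|$ beyond something of the order $|K|\leq 10n/c^2$; this is where the specific choice of $w$ in each branch, together with the structural consequences of $\neg\cE_{exp}\land\neg\cE_{0.01n}\land\neg\cE_K$, is doing the real work.
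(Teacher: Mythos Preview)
Your overall strategy is right and matches the paper's: use \eqref{eq:Xilower}, bound $|S_t|$ via $\neg\cE_{exp}\land\neg\cE_{0.01n}\land\neg\cE_K$, and bound $t$ via \eqref{eq:tlower} together with the fact that the initial FindSparse leaves no small non-expanding set, so $|W_1|$ (hence $|S_1\setminus S_0|$) must exceed the threshold $\max\{w,|S_0\setminus K|\}$.

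The gap is exactly the ``main obstacle'' you flag but do not resolve. Your claim that in the branch $|K|\leq 10n/c^2$ the set $S_{\text{FS}}$ ``is forced to remain at the same order as $w$ or $|K|$'' is not justified and in fact need not hold: nothing in $\neg\cE_{exp}\land\neg\cE_{0.01n}\land\neg\cE_K$ prevents the initial FindSparse from growing $S_{\text{FS}}$ all the way up to order $0.02n$ through a chain of small non-expanding sets. If that happens, your estimate $t\leq\log_2(0.02n/w)+1=O(c)$ combined with $|S_0|$ of order $n$ gives $2t|S_0|$ of order $cn$, which is far too large.

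The fix the paper uses is not to bound $|S_{\text{FS}}|$ at all, but to introduce a \emph{third} case on top of your two branches: either $|S_0|<1.01|K|$, in which case $|S_0|$ really is small and your argument using $|W_1|>w$ goes through in both $|K|$-branches; or $|S_0|\geq 1.01|K|$, in which case $|S_0\setminus K|\geq 0.01|S_0|$ and one uses the \emph{other} half of the FindSparse termination guarantee, namely $|W_1|>|S_0\setminus K|\geq 0.01|S_0|$. This yields $t\leq\log_2(3n/|S_0|)$, and then $2t|S_0|\leq 2n\max\{x\log_2(3/x):0\leq x\leq 0.03\}$, which is below $n$. In short, you correctly stated the termination property $|W_1|>\max\{w,|S_{\text{FS}}|\}$ but only exploited the $w$ part; the missing idea is to exploit the $|S_{\text{FS}}|$ part when $S_{\text{FS}}$ is large.
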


\begin{proof}
Write  $\cE_{exp}$ and $\cE_{0.01n}$ for the events  $ \cE_{exp}(G,F_0)$ and $\cE_{0.01n}(G,F_0)$  respectively. Let $G_1=([n], E(G)\cap F_0)$ and $K=V_{6}(G_1)$. Let $S_0,S_1,...,S_t$ be as in Lemma \ref{lem:coundmiddlerandom}. 

First, in the event $\neg \cE_{exp} \land \neg \cE_{0.01n} \land \neg \cE_K \land \set{|K| \leq \frac{10n}{c^2}} \land  \set{ |S_0|<1.01|K|}$ as $|K|\leq \frac{10n}{c^2}$ upon termination of FindSparse$(G[V(G)\setminus k],G_1[V(G)\setminus K],E(G_1),w)$ there does not exists a subset $Q$ of $[n]\setminus S_0$ of size at most $\max\{e^{-2c}n,|S_0\setminus K|\} \geq e^{-2c}n$ with the property that $|N_{G_1}(Q)\setminus S_0| <2|Q\setminus S_0|$. Since $W_1$ has this property we have that  $|S_1|-|S_0|= |S_1\setminus S_0|\geq |W_1|\geq e^{-2c}n$. 
\eqref{eq:tlower} implies that  $t\leq \log_2\bfrac{0.02n}{e^{-2c}n}+1\leq 2c\log^{-1}2$. Substituting this last inequality into \eqref{eq:Xilower}, as $c\geq 400$, $|S_0|<1.01|K|\leq \frac{10.1n}{c^2}$ and $|S_t|\leq 0.03n$, we get that,
$$\sum_{i=1}^{0.01n^2}X_i<2\cdot (2c\log^{-1}2) \cdot  \frac{10.1n}{c^2} +4\cdot 0.03n +n<2n.$$

Similarly, in the event $\neg \cE_{exp} \land \neg \cE_{0.01n} \land \neg \cE_K \land  \set{\frac{10n}{c^2}<|K| \leq \frac{10n}{c}} \land  \set{ |S_0|<1.01|K|}$ we have that $|S_0|<1.01|K|\leq \frac{10.1n}{c}$, $|S_1|-|S_0|= |S_1\setminus S_0|\geq |W_1|\geq |K| \geq \frac{10n}{c^2}$ and $t\leq \log_2\bfrac{0.03n}{10c^{-2}n}=\log_2 0.003c^2$.
Therefore,
\begin{align*}
    \sum_{i=1}^{0.01n^2}X_i& \leq 2\cdot \log_2 (0.003c^2) \cdot  \frac{10.1n}{c} +0.12n +n \leq \frac{20.2n \log_2(0.003\cdot 400^2)}{400}+1.12n \leq 2n.
\end{align*}
At the second inequality we used that the function $x^{-1} \log_2 (x)$ is decreasing for $x\geq c\geq  400$.

Finally in the event  $\neg \cE_{exp} \land \neg \cE_{0.01n} \land \neg \cE_K \land  \set{ |S_0|> 1.01|K|}$ we have that upon termination of FindSparse$(G[V(G)\setminus k],G_1[V(G)\setminus K],E(G_1),w)$ there does not exists a subset $Q$ of $[n]\setminus S_0$ of size at most $S_0\setminus K$ with the property that $|N_{G_1}(Q)\setminus S_0| <2|Q\setminus S_0|$. Since $W_1$ has this property we have that  $|S_1|-|S_0|= |S_1\setminus S_0|\geq |S_0\setminus K|\geq 0.01|S_0|$. \eqref{eq:tlower} implies that  $t\leq \log_2\bfrac{0.03n}{0.01|S_0|}$. Substituting this last inequality into \eqref{eq:Xilower} give,
$$\sum_{i=1}^{0.01n^2}X_i\leq 2|S_0|\cdot \log_2\bfrac{3n}{|S_0|} +1.12n \leq 2 \max\{x(\log_2 3 x^{-1}):0\leq x\leq 0.03\}n+1.12n<2n.$$
In all 3 cases we have that $\sum_{i=1}^{0.01n^2}X_i\leq 2n$.
\end{proof}

For a graph $G$ we define the event $\cE_{\delta\geq 2}(G)=\{\delta(G)\geq 2\}.$  Let $T(G,F_0)$ be the running time of DCerHAM$(G,F_0)$, let $T_B(G,F_0)$ be the portion of the running time of DCerHAM spend on executing FindSparse, executing  CoverAndAdjust$(G_R,S_i,\cP)$ for pairs $(G_R, S_i)$ such that the maximum subset $C$ of $S_i$ with the property that $C\cup N_{G_R}(C)$ is connected in $G_R$ has size at least $\log n/10$, and executing line $22$ of CerHAM. Finally, we let $T_A(G,F_0)=T(G,F_0)-T_B(G,F_0)$. In place of Lemma \ref{lem:middleT} we have the following one.

\begin{lemma}\label{lem:lowerT} 
Let $G$ be a graph on $[n]$ and $F_0$ be an edge set on $[n]$ with the property $(G,F_0)\in \cR$. Let $G_1=([n],E(G)\cap F)$  and $c= n|E(G_1)|/(n-1)$. Then, 
with 
$$\cE_{bad}(G,F_0)= \cE_{exp}(G,F_0)\lor \cE_{count}(G,F_0) \lor \cE_{0.01n}(G,F_0) \lor \cE_K\lor \{2|E(G)|/n<400\},$$
we have that $T_A(G,F_0)=O(n^7)$ and 

\begin{align}\label{eq:lowerTB} 
\mathbb{E}(T_{B}(G,F_0))=  
   O\bigg(&  n^72^n\Pr(\cE_{Bad}(G,F_0))+\sum_{i=0}^{0.02n }  i^4 \binom{n}{i}\Pr(\cA_i(G,F_0) ) \nonumber
   \\&+\sum_{i=\frac{\log n}{10}}^{0.02n }n^2i^62^{3i} \Pr(\cA_i(G,F_0))  +\sum_{i=\frac{\log n}{10}}^{0.03n }n^2i^62^{3i} \Pr(\cB_i(G,F_0))  \nonumber
   \\&+ \sum_{\ell=400}^n  n^3\binom{n}{ne^{-2\ell}}\Pr\bigg(\set{ |K| \leq \frac{10n}{\ell^2} }\land \cE_{\delta\geq 2}(G)\land \{c=\ell\} \bigg)\nonumber
 \\&+
 \sum_{\ell=400}^n  n^3\binom{n}{\frac{10n}{\ell^2}}\Pr\bigg(\set{ \frac{10n}{\ell^2} <|K| \leq \frac{10n}{\ell} }\land \{c=\ell\} \bigg)\bigg). 
\end{align}
\end{lemma}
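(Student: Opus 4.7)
The plan is to follow the template of Lemma \ref{lem:middleT} nearly verbatim, keeping the decomposition $T=T_A+T_B$ and the case split on whether $\cE_{bad}$ occurs, and only augmenting the accounting to handle two new features of the sparse-regime algorithm of Subsection \ref{subsec:alg:lower}: the preliminary minimum-degree check and, crucially, the single initial call to FindSparse with the ``twist'' allowing $\cQ$ to contain subsets of size up to $\max\{w,|S|\}$.

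For $T_A(G,F_0)=O(n^7)$ I would repeat the argument of Lemma \ref{lem:middleT} with the obvious additions: checking $\delta(G)\geq 2$ and identifying $K=V_6(G_1)$ by peeling are both $O(n^2)$; running CerHAM without lines 7, 8, and 22 is $O(n^7)$ by the proof of Lemma \ref{lem:upperT}; and the at most $n+1$ calls to CoverAndAdjust whose maximum $S_i$-component has size less than $(\log n)/10$ each cost $O(n\cdot 2^{0.3\log n}\cdot(\log n)^6)=O(n^2)$. Together these absorb into $O(n^7)$.

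For $\mathbb{E}(T_B)$, the first line of \eqref{eq:lowerTB} is the deterministic worst-case bound $T_B=O(n^72^n)$ restricted to $\cE_{bad}$. When $\neg\cE_{bad}$ holds, Lemma \ref{lem:auxlower} gives $|S_t\setminus K|\leq 0.02n$ and $t\leq\log_2|S_t\setminus K|+1$, after which the analysis of subsequent FindSparse calls and large-component CoverAndAdjust calls is identical to that of Lemma \ref{lem:middleT}: summing $O(i^3\binom{n}{i})$ across up to $i$ FindSparse calls with final non-core size $i$ yields the second line, and for each pair $(G_R,S_j)$ whose largest $S_j$-component $C$ has size $i\geq (\log n)/10$ one argues as at the end of the proof of Lemma \ref{lem:middleT} that either $\cA_i(G,F_0)$ or $\cB_i(G,F_0)$ must occur, giving a cost of $O(n^2 i^6 2^{3i})$ per event and the third line after summation.

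The genuinely new step is the initial FindSparse call. A single iteration with current set $S$ searches $\binom{n}{\max\{w,|S|\}}$ subsets at cost $O(\max\{w,|S|\}^2)$ each and, when a set is found, queries $O(n|S|)$ edges; with at most $n$ iterations the cost is $O(n^3\binom{n}{\max\{w,|S|\}})$. On $\neg\cE_{bad}$ we have $|S|\leq 0.02n$, so iterations in which $|S|>w$ behave like subsequent FindSparse calls and are already charged to the second line of \eqref{eq:lowerTB}, while the remaining iterations (those with $|S|\leq w$) contribute at most $O(n^3\binom{n}{w})$. I would then split by $\ell=c\geq 400$ and by whether $|K|\leq 10n/\ell^2$ (so $w=ne^{-2\ell}$) or $10n/\ell^2<|K|\leq 10n/\ell$ (so $w=n/\ell^2$), using that in the first regime the initial FindSparse runs only if $\delta(G)\geq 2$ (DCerHAM returns non-Hamiltonian otherwise), which accounts for the $\cE_{\delta\geq 2}(G)$ factor appearing only in the fourth line; summing over $\ell$ from $400$ to $n$ gives lines four and five.

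The main obstacle will be the bookkeeping for the initial FindSparse: demonstrating that its cost cleanly decomposes into an $O(n^3\binom{n}{w})$ contribution (yielding the fourth and fifth lines after splitting on $\ell$) and a residual part dominated by the second line of \eqref{eq:lowerTB}. Everything else is a routine line-by-line transcription of the proof of Lemma \ref{lem:middleT}.
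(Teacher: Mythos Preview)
Your proposal is correct and follows essentially the same approach as the paper's proof, which is equally terse: it refers back to Lemma~\ref{lem:middleT} for $T_A$ and for the first two lines of \eqref{eq:lowerTB}, and then notes that the last two lines account for the initial FindSparse call in the event $\neg\cE_{bad}\wedge\{|S_0\setminus K|\leq w\}$, where FindSparse examines sets of size up to $\max\{w,|S_0\setminus K|\}=w$ and hence runs in $O(n^3\binom{n}{w})$ time. Your slightly more detailed bookkeeping---splitting the initial FindSparse into iterations with $|S|\leq w$ (charged to lines four and five) versus $|S|>w$ (absorbed into the $\cA_i$ sum on line two)---is a valid refinement of the same argument.
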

\begin{proof}
Write  $\cE_{\delta \geq 2}, \cE_{exp}, \cE_{0.01n},  \cE_{count},A_i$ and $B_i$ for the events  $\cE_{\delta\geq 2}(G), \cE_{exp}(G,F_0), \cE_{0.01n}(G,$ $F_0),$ $\cE_{count}(G,$ $F_0),$ $\cA_i(G,F_0)$ and $\cB_i(G,F_0)$ respectively. 
It takes $O(n^2)$ time to check whether $\delta(G)\geq 2$. Thereafter the upper bound on $T_A$ follows as in the proof of Lemma \ref{lem:middleT}. 

The justification of the first two lines of \eqref{eq:lowerTB} is almost identical to the justification of \eqref{eq:middleTB} given in the proof of Lemma \ref{lem:middleT}. The last two lines account for the running time of FindSparse in the event $\neg\cE_{Bad} \wedge \{|S_0 \setminus K| \leq w \}$. In this event $c\geq 400$ and $|K|\leq 10n/c$. Thereafter if $|K|\leq 10n/c^2$ then the DCerHAM algorithm sets $w=ne^{-2c}$. Else it sets $w=10n/c^2$. In both cases, at its first execution, FindSparse examines sets of size up to $\max\{w,|S_0\setminus K|\}=w$ and therefore runs in $O(w^3\binom{n}{w})=O(n^3\binom{n}{w})$ time. 
\end{proof}

\begin{lemma}\label{lem:eq:lower}
Let $G\sim G(n,p)$, $G'=E(H_n)$  and $F_0=E(H_n)$. Then, $\mathbb{E}(T_B(G,F_0))=O(n)$ for $\frac{5000}{n} \leq p\leq \frac{500\log\log n}{n}$.
\end{lemma}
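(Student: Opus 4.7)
The plan is to apply Lemma \ref{lem:lowerT} and verify that each of the six summands in \eqref{eq:lowerTB} is $O(n)$. By Lemma \ref{lem:pseudo}, $(G,F_0)\in \cR$, so Lemma \ref{lem:lowerT} is applicable; the rest of the proof is term-by-term bookkeeping, closely paralleling the argument of Lemma \ref{lem:eq:middle}.

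The $\cA_i$ and $\cB_j$ sums are handled directly via \eqref{eq:big_s} of Lemma \ref{lem:color}: $\sum_{i\le 0.02n} i^4\binom{n}{i}\Pr(\cA_i)=O(n)$ since each term is $O(1)$; the $\cB_j$ sum is bounded by $\sum_j n^2 j^{-1}e^{-100j}=o(1)$; and in $\sum_i n^2 i^6 2^{3i}\Pr(\cA_i)$ over $i\ge(\log n)/10$, substituting $\Pr(\cA_i)\le O(1/(i^4\binom{n}{i}))$ and using $\binom{n}{i}\ge(n/i)^i$ shows each term is summably small. For the $\Pr(\cE_{\text{Bad}})$ factor I would combine \eqref{eq:4core} (giving $\Pr(\cE_{exp}\lor\cE_{0.01n})\le 2^{-1.1n}$), Chernoff on $|E(G)|$ of mean $\gtrsim 2500 n$ (killing $\Pr(2|E(G)|/n<400)$), Lemma \ref{lem:countlowerroand} together with Chernoff on $\mathrm{Bin}(0.01n^2,p)$ of mean $\ge 50 n$ (handling $\Pr(\cE_{count})$), and a union bound over $c=\ell$ plus \eqref{eq:4core} with $s=10n/\ell$ (handling $\Pr(\cE_K)$). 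Each contribution is $o(2^{-n}/n^7)$, so $n^7 2^n\Pr(\cE_{\text{Bad}})=o(1)$.

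The two $\ell$-sums are the substantive part. The plan is to split the range of $\ell$ based on its distance from the typical value $np\in[5000,500\log\log n]$. For $\ell$ far from $np$, Chernoff applied to $|E(G)|$ yields $\Pr(c=\ell)=e^{-\Omega(n)}$, which dominates any polynomial or sub-exponential blow-up of the binomial factors. For $\ell$ close to $np$ we have $\ell=O(\log\log n)$, and in this regime $s=10n/\ell^2$ lies in the window $[n/\log^2 n, 0.01n]$ for which \eqref{eq:4core} is available, so the second $\ell$-sum is controlled directly by the exponential decay of $\Pr(|K|\ge s)$ against $\binom{n}{s}$. For the first $\ell$-sum, I would additionally invoke Lemma \ref{lem:degree}, which gives $\Pr(\delta\ge 2\mid c=\ell)\le n e^{-n\ell e^{-\ell}}$, to absorb $\binom{n}{ne^{-2\ell}}$ when $\ell$ is small enough that $\ell e^{-\ell}$ is nontrivial.

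The main obstacle is the first $\ell$-sum: for $\ell$ moderate (around $np$) and $n$ very large, the binomial $\binom{n}{ne^{-2\ell}}$ can grow super-polynomially, while neither the Chernoff tail of $\Pr(c=\ell)$ nor Lemma \ref{lem:degree} is effective on its own. The choice $w=ne^{-2c}$ is tuned precisely so that the conjunction $\{|K|\le 10n/c^2\}\land\{\delta\ge 2\}\land\{c=\ell\}$ carries enough joint improbability to cancel the FindSparse cost $O(n^3\binom{n}{w})$; to verify this I plan a careful three-way case split — small $\ell$ using Lemma \ref{lem:degree}, moderate $\ell$ combining the 6-core tail in \eqref{eq:4core} (applied at threshold $s=10n/\ell^2$) with a structural count of vertices of degree at most five, and large $\ell$ using Chernoff concentration of $|E(G)|$ — and union-bound the contributions.
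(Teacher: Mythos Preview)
Your overall plan matches the paper's: verify $(G,F_0)\in\cR$ via Lemma~\ref{lem:pseudo}, apply Lemma~\ref{lem:lowerT}, bound the $\cE_{Bad}$, $\cA_i$, $\cB_j$ contributions via Lemma~\ref{lem:color} together with Chernoff (this is the paper's \eqref{eq:101}), and then deal with the two $\ell$-sums. Your treatment of the second $\ell$-sum --- split on whether $c$ concentrates near $0.1np$, then invoke the $6$-core tail \eqref{eq:4core} --- is exactly \eqref{eq:103}.

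Where you diverge is the first $\ell$-sum, and here you are overcomplicating matters. The paper's argument in \eqref{eq:102} simply drops the clause $\{|K|\le 10n/\ell^2\}$ and bounds $\Pr(\cE_{\delta\ge 2}\mid c=\ell)$ via Lemma~\ref{lem:degree}. The elementary comparison you are missing is
\[
\binom{n}{ne^{-2\ell}}\le\exp\bigl((2\ell+1)e^{-2\ell}n\bigr),
\qquad
\Pr\bigl(\delta(G)\ge 2\mid c=\ell\bigr)\le n\exp\bigl(-\ell e^{-\ell}n\bigr),
\]
and $(2\ell+1)e^{-2\ell}\big/\bigl(\ell e^{-\ell}\bigr)\le 3e^{-\ell}\le 3e^{-400}$ for every $\ell\ge 400$. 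Hence the minimum-degree bound swallows the binomial by an enormous margin whenever $\ell e^{-\ell}n$ is large --- in particular for all $\ell$ up to roughly $\log n$, which far exceeds the typical range $\ell\approx np\le 500\log\log n$. Your stated ``main obstacle'', a moderate-$\ell$ regime in which neither Lemma~\ref{lem:degree} nor Chernoff is effective, does not exist: a two-way split (Lemma~\ref{lem:degree} for $\ell$ up to, say, $2np$, and Chernoff on $\Pr(c=\ell)$ beyond) already covers everything, and the paper does not even write out the second half.

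Your proposed middle case, invoking the $6$-core tail \eqref{eq:4core} at threshold $s=10n/\ell^2$, is also misdirected: \eqref{eq:4core} upper-bounds $\Pr(|K|\ge s)$, whereas the event in the first $\ell$-sum is $\{|K|\le 10n/\ell^2\}$, the \emph{likely} direction, which carries no smallness at all. All the smallness in that term comes from $\cE_{\delta\ge 2}\land\{c=\ell\}$, which is precisely why the paper discards the $|K|$ constraint outright.
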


\begin{proof}
Lemma \ref{lem:pseudo} implies that $(G,F_0)\in \cR$. Write  $\cE_{\delta\geq 2}, \cE_{exp}, \cE_{0.01n},  \cE_{count},A_i$ and $B_i$ for the events  $\cE_{\delta\geq 2}(G), \cE_{exp}(G,F_0), \cE_{0.01n}(G,$ $F_0),$ $\cE_{count_3}(G,$ $F_0),$ $\cA_i(G,F_0)$ and $\cB_i(G,F_0)$ respectively. In addition let $G_1=([n],E(G)\cap F_0)$.

Lemma \ref{lem:pseudo} implies that $H_n$ spans at least $0.05n^2$ and at most $0.0505n^2+o(n^2)$ edges and therefore $|E(G_1)|\sim Bin(\alpha\binom{n}{2},p)$ for some $a\in [0.1,0.102]$.  \eqref{eq:chernofflower}, \eqref{eq:chernoffupper} give,
\begin{align*}
\Pr(|c-0.1np|> 0.01np)\leq 2e^{- \frac{0.088^2 \cdot 0.1p\binom{n}{2}}{2.088}} \leq 
2e^{-0.8n} \leq 2^{-1.1n}.
\end{align*}
In particular $ \Pr(c<400)=O(2^{-1.1n}).$ Thereafter, using \eqref{eq:big_s}, we have,
\begin{align*}
    \Pr(\cE_K)&\leq \Pr(\cE_K \land \{ |c-0.1np|\leq 0.01np\}) +\Pr(|c-0.1np|> 0.01np)    \leq \Pr\big(\cE_{ \frac{10n}{0.11np}}\big)+2^{-1.1n}
\\&\leq \bigg( \bfrac{n}{\frac{10n}{0.11np}} (0.1np)^5 e^{-0.09np} \bigg)^{\frac{10n}{0.11np}} +2^{-1.1n}
\leq \bigg( 500^6 e^{-450}\bigg)^{\frac{20n}{1.1np}}  +2^{-1.1n} =O(2^{-1.1n}).
\end{align*}
In addition to the above, \eqref{eq:4core} states that $\Pr(\cE_{exp}\lor \cE_{0.01n})=O(2^{-1.1n})$; hence as in the proof of Lemma \ref{lem:eq:middle} one has $\Pr(\cE_{Bad})=O(2^{-1.1n})$. Therefore, as in the proof of Lemma \ref{lem:eq:middle} one has,
\begin{align}\label{eq:101}
 O\bigg(&  n^72^n\Pr(\cE_{Bad}(G,F_0))+\sum_{i=0}^{0.02n }  i^4 \binom{n}{i}\Pr(\cA_i(G,F_0) ) \nonumber
   \\&+\sum_{i=\frac{\log n}{10}}^{0.02n }n^2i^62^{3i} \Pr(\cA_i(G,F_0))  +\sum_{i=\frac{\log n}{10}}^{0.03n }n^2i^62^{3i} \Pr(\cB_i(G,F_0)) \bigg)=O(n).
\end{align}
Thereafter,
\begin{align}\label{eq:102}
   &  \sum_{\ell=400}^n  n^3\binom{n}{ne^{-2\ell}}\Pr\bigg(\set{ |K| \leq \frac{10n}{\ell^2} }\land \cE_{\delta\geq 2}\land \{c=\ell\} \bigg)\nonumber
 \leq \sum_{\ell=400}^n  n^3\binom{n}{ne^{-2\ell}}\Pr\bigg( \cE_{\delta\geq 2}\bigg|\{c=\ell\} \bigg) \nonumber
 \\& \leq \sum_{\ell=400}^n  n^3e^{(2\ell+1)e^{-2\ell}n} ne^{-\ell e^{-(1+o(1))\ell}n}   =O(n).
\end{align}
Finally, as $400<0.09np$ and $\{|K|=i\} \subseteq \cE_{s}=\cE_{s}(G,F_0)$ for $j\leq s$, using \eqref{eq:4core} we get,
\begin{align}\label{eq:103}
 & \sum_{\ell=400}^n n^3\binom{n}{\frac{10n}{\ell^2}}\Pr\bigg(\set{ \frac{10n}{\ell^2} <|K| \leq \frac{10n}{\ell} }\land \{c=\ell\} \bigg) \nonumber
\\& \leq n^3 2^n \Pr(|c-0.1np|\geq 0.01np) 
+ \sum_{\ell=400}^n  n^3\binom{n}{\frac{10n}{\ell^2}}\Pr\bigg(\cE_{\frac{10n}{\ell} }\land \{\ell\in [0.09np,0.11np]\} \bigg) \nonumber
\\& \leq 1 + n^4
\binom{n}{\frac{10n}{(0.09np)^2}}\Pr\big( \cE_{\frac{10n}{(0.11np)^2}} \big) \nonumber
\\&  \leq  1+ \max\set{  n^4\bfrac{(0.09x)^2e}{10}^{\frac{10n}{(0.09x)^2}}  \cdot \bigg( \bfrac{en}{\frac{10n}{(0.11x)^2}}
\bfrac{0.1ex}{5}^5 e^{-0.09x}\bigg)^{\frac{10n}{(0.11x)^2}}      :x\geq 5000}   \nonumber 
\\&  \leq 1+ n^4\max\set{ \big(e^{10}(0.1x)^{10} e^{-0.09x}\big)^{\frac{10n}{(0.11x)^2} }      :x\geq 5000}   \leq 2. 
\end{align}

Equations \eqref{eq:lowerTB}, \eqref{eq:101}, \eqref{eq:102} and \eqref{eq:103} imply that $T_B(G, F_0)=O(n)$ for $ \frac{5000}{n} \leq p\leq \frac{500\log \log n}{n}$.
\end{proof}

\textbf{Proof of Theorem \ref{thm:main} for $ \frac{5000}{n}\leq p\leq \frac{500\log\log n}{n}$:} At the execution of DCerHAM($G$), Lemma \ref{lem:pseudo} implies that $(G,F_0)\in \mathcal{R}$.  Then, the decomposition of the expected running time of DCerHAM($G$) follows from lemmas  \ref{lem:lowerT} and \ref{lem:eq:lower}.
\qed
\section{Conclusion} 
We have introduced and analysed DCerHAM, an algorithm that solves HAM in $O\bfrac{n}{p}$ expected running time over the input distribution $G\sim G(n,p)$ for $p\geq \frac{5000}{n}$. The value $p=\frac{5000}{n}$ has not been optimized; however we believe that new ideas are needed to extend it pass the value $\frac{100}{n}$.
  
\subsection*{Acknowledgement}
The author would like to thank Alan Frieze and the anonymous reviewers for their constructive comments on an earlier version of this manuscript.
 
\bibliographystyle{plain}
\bibliography{bib}

\begin{appendices}

\section{A Family of Pseudorandom  Graphs}\label{app:pseudorandom} 
A graph $G$ is a strongly regular graph with parameters $(n, d, \eta, \mu)$ if (i) $G$ is a $d$-regular graph on $n$ vertices, (ii) for every $x,y\in V(G)$ if $xy \in E(G)$ then $x$ and $y$ have exactly $\eta$ common neighbors and (iii) for every $x,y\in V(G), x\neq y$ if $xy\notin E(G)$ then $x$ and $y$ have exactly $\mu$ common neighbors. For the proofs of the following Lemmas and some further reading on pseudorandom and strongly regular graphs see \cite{kriv2006}.

\begin{lemma}\label{lem:str}
Let $G$ be a connected strongly regular graph with parameters $(n, d, \eta, \mu)$. Then $G$ has only 3 distinct eigenvalues $\lambda_1,\lambda_2,\lambda_3$ which are given by,
\begin{itemize}
    \item $\lambda_1=d$,
    \item $\lambda_2=\frac{1}{2}\bigg(\eta-\mu+\sqrt{(\eta-\mu)^2+4(d-\mu)} \bigg)$ and
    \item $\lambda_3=\frac{1}{2}\bigg(\eta-\mu-\sqrt{(\eta-\mu)^2+4(d-\mu)} \bigg)$.
\end{itemize}
\end{lemma}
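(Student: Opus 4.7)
The plan is to work with the adjacency matrix $A$ of $G$ and exploit the combinatorial identity that the defining properties of a strongly regular graph impose on $A^2$. Since $G$ is $d$-regular, each entry $(A^2)_{ii}$ counts the number of walks of length $2$ from $i$ back to itself, which equals $d$. For $i\neq j$, the entry $(A^2)_{ij}$ counts the number of common neighbors of $i$ and $j$, which equals $\eta$ if $ij\in E(G)$ and $\mu$ otherwise. Letting $J$ denote the all-ones matrix and $I$ the identity, this yields the matrix identity
\[
A^2 \;=\; dI + \eta A + \mu\bigl(J-I-A\bigr) \;=\; (d-\mu)I + (\eta-\mu)A + \mu J.
\]

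Next I would identify the eigenvalues. Since $G$ is $d$-regular, the all-ones vector $\mathbf{1}$ satisfies $A\mathbf{1}=d\mathbf{1}$, so $\lambda_1=d$ is an eigenvalue. Because $G$ is connected, the Perron--Frobenius theorem (applied to the nonnegative symmetric matrix $A$) gives that this eigenvalue is simple and that the corresponding eigenspace is spanned by $\mathbf{1}$. Consequently every other eigenvector $v$ of $A$ can be chosen orthogonal to $\mathbf{1}$, and for such $v$ we have $Jv=0$.

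Substituting such an eigenvector $v$ with $Av=\lambda v$ into the rearranged identity $A^2 - (\eta-\mu)A - (d-\mu)I = \mu J$ and applying both sides to $v$ yields
\[
\lambda^2 - (\eta-\mu)\lambda - (d-\mu) \;=\; 0.
\]
Solving this quadratic by the usual formula produces exactly the two values
\[
\lambda_{2,3} \;=\; \tfrac{1}{2}\Bigl(\eta-\mu \pm \sqrt{(\eta-\mu)^2 + 4(d-\mu)}\Bigr),
\]
matching the statement. Finally, to conclude that these are the \emph{only} eigenvalues, I would note that $A$ is symmetric and thus diagonalizable over $\mathbb{R}$; every eigenvector either lies in $\mathrm{span}(\mathbf{1})$ (giving $\lambda_1=d$) or is orthogonal to $\mathbf{1}$ (forcing $\lambda\in\{\lambda_2,\lambda_3\}$ by the quadratic).

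The only step that requires more than routine manipulation is the simplicity of the eigenvalue $d$: this is where connectedness of $G$ is used, via Perron--Frobenius (equivalently, via the fact that the eigenspace of $d$ for a $d$-regular graph has dimension equal to the number of connected components). Beyond that observation, the argument is a direct computation from the definition.
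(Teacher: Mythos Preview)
Your argument is correct and is precisely the standard textbook proof of this fact. The paper itself does not supply a proof of this lemma; it simply cites \cite{kriv2006} for both this statement and the expander mixing lemma that follows, so there is nothing to compare against. One cosmetic remark: the lemma asserts \emph{three distinct} eigenvalues, whereas your argument establishes that the spectrum is contained in $\{d,\lambda_2,\lambda_3\}$; verifying distinctness (e.g.\ that $d\neq\lambda_{2,3}$ and that the discriminant is positive for a connected, non-complete strongly regular graph) is routine and in any case irrelevant for how the paper uses the lemma, which only needs $\max(|\lambda_2|,|\lambda_3|)=O(\sqrt{n})$.
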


For a graph $G$ and disjoint sets $U,W\subset V(G)$ we let $e(U,W)$ be the number of edges with an endpoint in each $U,W$.
\begin{lemma}
Let $G$ be a $d$-regular graph on n vertices. Let $d = \lambda_1\geq \lambda_1 \geq \dots \geq \lambda_n$ be the eigenvalues of $G$. Let $\lambda=\underset{2\leq i\leq n}{\max}|\lambda_i|$. Then for every two disjoint subsets $U, W\subseteq V(G)$,
\begin{equation}\label{eq:app:pseudo}
\bigg|e(U,W)-\frac{d|U||W|}{n} \bigg|\leq \lambda \sqrt{|U||W|\bigg(1-\frac{|U|}{n}\bigg)\bigg(1-\frac{|W|}{n}\bigg)}.    
\end{equation}
\end{lemma}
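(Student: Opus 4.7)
The plan is to use the spectral decomposition of the adjacency matrix and express $e(U,W)$ as a bilinear form in indicator vectors, then bound the ``non-principal'' components by Cauchy--Schwarz.

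Let $A$ be the adjacency matrix of $G$. Since $A$ is symmetric, there is an orthonormal eigenbasis $v_1,\dots,v_n$ with eigenvalues $\lambda_1\geq \lambda_2\geq \dots\geq \lambda_n$. Since $G$ is $d$-regular, we may take $v_1=\mathbf{1}/\sqrt{n}$, the normalized all-ones vector, corresponding to $\lambda_1=d$. Given disjoint $U,W\subseteq V(G)$, let $\chi_U,\chi_W\in\{0,1\}^n$ denote their indicator vectors, and observe that
\[
e(U,W)=\chi_U^\top A\,\chi_W,
\]
because each edge with one endpoint in $U$ and the other in $W$ is counted exactly once (disjointness means there are no $U$--$W$ ``self-edges'' to worry about).

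Next I would expand $\chi_U=\sum_i a_i v_i$ and $\chi_W=\sum_i b_i v_i$ in the eigenbasis. The top coefficients are $a_1=\langle \chi_U,\mathbf{1}/\sqrt{n}\rangle=|U|/\sqrt{n}$ and similarly $b_1=|W|/\sqrt{n}$. Then
\[
\chi_U^\top A\,\chi_W=\sum_{i=1}^n \lambda_i a_i b_i = d\cdot\frac{|U|\,|W|}{n}+\sum_{i=2}^n \lambda_i a_i b_i.
\]
Subtracting the principal term, we obtain
\[
\left|e(U,W)-\frac{d|U|\,|W|}{n}\right|=\left|\sum_{i=2}^n \lambda_i a_i b_i\right|\leq \lambda\sum_{i=2}^n |a_i b_i|\leq \lambda\sqrt{\sum_{i=2}^n a_i^2}\sqrt{\sum_{i=2}^n b_i^2},
\]
where the last step is Cauchy--Schwarz.

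Finally I would compute the tail sums via Parseval. Since $\|\chi_U\|^2=|U|=\sum_i a_i^2$ and $a_1^2=|U|^2/n$, we get $\sum_{i\geq 2} a_i^2=|U|-|U|^2/n=|U|(1-|U|/n)$, and analogously $\sum_{i\geq 2}b_i^2=|W|(1-|W|/n)$. Plugging these into the previous display yields exactly the bound \eqref{eq:app:pseudo}. The one mildly subtle point is ensuring that the diagonal of $A$ does not create phantom edges inside $U\cap W$; since $U$ and $W$ are disjoint and $G$ has no loops, $\chi_U^\top A\chi_W$ literally equals $e(U,W)$, so no correction is needed. Everything else is bookkeeping with orthonormal expansions, and I do not foresee an obstacle.
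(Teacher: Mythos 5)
Your proof is correct: it is the standard spectral proof of the expander mixing lemma (orthonormal eigendecomposition with $v_1=\mathbf{1}/\sqrt{n}$, Cauchy--Schwarz on the non-principal coefficients, and Parseval to evaluate the tail sums), and all the computations, including the treatment of disjointness, check out. The paper does not prove this lemma itself but cites \cite{kriv2006}, where essentially this same argument is given, so your proposal matches the intended proof.
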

We now introduce a family of strongly regular graphs $H_{q,k}$ where $1\leq k\leq q$ and $q$ is a prime power which we later use to construct $\{H_n\}_{n\geq 1}$. This family is due to Delsarte and Goethals, and  Turyn (see \cite{kriv2006}, \cite{Sei}). Let $V_q$ be the elements of the two-dimensional vector space over $GF(q)$. Let $L$ be the set of $q+1$ lines that pass through the origin and $L_k$ be a subset of $L$ of size $k$. We define $H_{q,k}$ as the graph on $V_q$ where two elements $x,y \in V_q$ form an edge iff the line that passes through both $x$ and $y$ is parallel to a line in $L_k$. It is easy to check that $H_{q,k}$ is a $k(q-1)$-regular graph on $q^2$ vertices. Every pair of neighbors shares exactly $(k-1)(k-2)+q-1$ neighbors and every pair of non-adjacent vertices shares exactly $k(k-1)$ neighbors. Thus $H_{q,k}$ is strongly regular with parameters $(q^2,k(q-1),(k-1)(k-2)+q-2,k(k-1))$.

Now, for   sufficiently large $n$ we construct the graph $H_n$ as follows: Let $q$ be the smallest prime between $\sqrt{n}$ and $2\sqrt{n}$. Let $k$ be the smallest integer larger than $0.1001q$ and let $H_n'=H_{q,k}$. Now let $H_n''$ be the subgraph of $H_{n}'$ spanned by the first $n$ vertices of $H_{n}'$ which we identify with $[n]$. Join every vertex of degree less than $0.1n$ in $H_n''$ to every vertex in $[0.1n+1]$ and let $H_n$ be the resultant graph.

\begin{lemma}\label{app:lem:pseudo}
For sufficiently large $n$ there exists a graph $H_n$ on $[n]$ that can be constructed in $O(n^7)$ time and satisfies the following.
\begin{itemize}
    \item[(a)] For every pair of disjoint sets $U, W\subseteq [n]$ the number of edges spanned by $U\times W$ is at least $\frac{0.1|U||W|}{n} -n^{7/4}$ and at most $\frac{0.101|U||W|}{n} + n^{7/4}$
    \item[(b)] $H_n$ has minimum degree $0.1n$.
    \item[(c)] At most $n^{2/3}$ vertices in $[n]$ have degree larger than $0.101n$ in $H_n$. 
\end{itemize}
\end{lemma}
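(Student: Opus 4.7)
The plan is to derive the pseudorandomness of $H_{q,k}$ from its strongly regular structure, transfer it to the induced subgraph $H_n''$ on $[n]$ via the Expander Mixing Lemma \eqref{eq:app:pseudo}, and finally absorb the $O(n)$ edges added by the fix-up step so that all three properties carry over to $H_n$.

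First I would apply Lemma \ref{lem:str} to $H_{q,k}$, whose parameters are $(N,d,\eta,\mu)=(q^2,\,k(q-1),\,(k-1)(k-2)+q-2,\,k(k-1))$. A direct computation gives $\eta-\mu=q-2k$ and $d-\mu=k(q-k)$, and the telescoping identity $(q-2k)^2+4k(q-k)=q^2$ reduces the non-principal eigenvalues to $\lambda_2=q-k$ and $\lambda_3=-k$. Since $k\in(0.1001q,\,0.1001q+1]$ lies strictly below $q/2$, the second-eigenvalue maximum is $\lambda=q-k\leq q\leq 2\sqrt{n}$, and the density satisfies $d/N=k(q-1)/q^2=0.1001+O(1/\sqrt{n})$.

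For (a) applied to $H_n''$: since $[n]$ is identified with a subset of $V(H_{q,k})$, for disjoint $U,W\subseteq[n]$ one has $e_{H_n''}(U,W)=e_{H_{q,k}}(U,W)$, and \eqref{eq:app:pseudo} gives
\[
\bigl|e_{H_n''}(U,W)-0.1001\,|U||W|\bigr|\leq |d/N-0.1001|\cdot|U||W|+\lambda\sqrt{|U||W|}=O(n^{3/2}),
\]
which fits inside the target window for $n$ large. For (b), let $L=\{v\in[n]:\deg_{H_n''}(v)<0.1n\}$; writing $\sum_{v\in L}\deg_{H_n''}(v)=d|L|-e_{H_{q,k}}(L,\,V(H_{q,k})\setminus[n])$ and applying \eqref{eq:app:pseudo} to the disjoint pair $(L,V(H_{q,k})\setminus[n])$ yields $\sum_{v\in L}\deg_{H_n''}(v)\geq 0.1001\,|L|n-O(\lambda\sqrt{|L|n})$. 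Combined with the trivial bound $\sum<0.1n|L|$ this forces $0.0001n|L|=O(\lambda\sqrt{|L|n})$, whence $|L|=O(\lambda^2/n)=O(1)$. Joining every vertex of $L$ to all of $[0.1n+1]$ then gives $\delta(H_n)\geq 0.1n$ while adding only $O(n)\ll n^{7/4}$ edges, so (a) is preserved for $H_n$. The symmetric argument on $T=\{v\in[n]:\deg_{H_n''}(v)>0.101n\}$ gives $|T|=O(1)$; after the fix-up, a vertex can have $H_n$-degree exceeding $0.101n$ only if it belongs to $T\cup L$ or to $[0.1n+1]$ with $H_n''$-degree already at least $0.101n-|L|=0.101n-O(1)$, and the same EML computation bounds this last set by $O(1)$. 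Thus at most $O(1)\leq n^{2/3}$ vertices of $H_n$ have degree greater than $0.101n$, establishing (c).

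The running time is dominated by building the adjacency matrix of $H_{q,k}$ in $O(q^4)=O(n^2)$ time, together with the search for a prime $q\in[\sqrt{n},2\sqrt{n}]$ (existence by Bertrand, location in polynomial time by trial division or AKS) and $O(n^2)$ work for the induced subgraph and fix-up; the total is well within $O(n^7)$. Most of the argument is routine pseudorandom-graph bookkeeping; the one point where anything non-trivial happens is the eigenvalue computation, where the exact cancellation $(\eta-\mu)^2+4(d-\mu)=q^2$ collapses the second eigenvalue to $\lambda=q-k=O(\sqrt{n})$, which is what drives everything else. A mild technical trick is to bound intra-set degree sums through the complementary pair $(L,V(H_{q,k})\setminus[n])$, so that only the disjoint-set form of the Expander Mixing Lemma stated in the paper is invoked.
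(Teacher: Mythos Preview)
Your proof is correct and follows the same overall strategy as the paper: compute the eigenvalues of the strongly regular graph $H_{q,k}$, feed them into the Expander Mixing Lemma \eqref{eq:app:pseudo}, and then argue that the fix-up step perturbs the edge counts by a negligible amount.

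There is one technical difference worth noting. To bound the number of low-degree vertices in $H_n''$, the paper applies \eqref{eq:app:pseudo} to the pair $(S_n,[n]\setminus S_n)$ inside $[n]$ and extracts $|S_n|=O(n^{1/2})$ (hence $O(n^{3/2})$ edges are added in the fix-up). You instead apply \eqref{eq:app:pseudo} to the pair $(L,\,V(H_{q,k})\setminus[n])$, which lets you write $\sum_{v\in L}\deg_{H_n''}(v)=d|L|-e_{H_{q,k}}(L,V(H_{q,k})\setminus[n])$ exactly and then squeeze out the sharper bound $|L|=O(1)$. Your route is a little cleaner: it avoids the mild bootstrapping issue that the quadratic inequality coming from the paper's pair a priori admits a second (large) root for $|S_n|$, and it reduces the fix-up to $O(n)$ edges rather than $O(n^{3/2})$. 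Either way the perturbation is $\ll n^{7/4}$ and all three conclusions follow. Your explicit eigenvalue computation $(\eta-\mu)^2+4(d-\mu)=q^2$, giving $\lambda_2=q-k$ and $\lambda_3=-k$, is also a nice sharpening of the paper's $|\lambda_2|,|\lambda_3|=O(q)$.
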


\begin{proof}
Determining whether $a$ is a prime takes $O(a^2)$ time by doing a brute force search for divisors of $a$ in $\{2,...,a-1\}$. Thus determining the primes smaller or equal to $4\sqrt{n}$ can be done in $O(n)$, hence $H_n$ can be constructed in $O(n^7)$ time.

Part (b) follows from the construction of $H_n$. Let  $S_n$ and $L_n$ respectively be the set of vertices of degree at most $0.1n$ and at least $0.1005n$ respectively in $H_n''$.

$H_{q,k}$ is strongly regular with parameters $(q^2,k(q-1),(k-1)(k-2)+q-2,k(k-1))$, for some  $k\in[0.1001q,0.1002q]$. Thus, as $|(k-1)(k-2)+q-2-k(k-1)|\leq q$, Lemma \ref{lem:str} implies that $H_{q,k}$ has only 3 distinct eigenvalues $\lambda_1,\lambda_2,\lambda_3$ with $\lambda_1=k(q-1)$ and $|\lambda_2|,|\lambda_3|=O(q)=O(\sqrt{n})$.  Hence \eqref{eq:app:pseudo} implies that for every pair of disjoint subsets  $U,W$ of $V(H_n'')\subseteq V(H_{q,k})$ 
\begin{equation}\label{eq:1}
    \bigg|e_{H_n''}(U,W)-\frac{k(q-1)|U||W|}{q^2} \bigg| =O(n^{3/2}).
\end{equation}
As $k$ is the smallest integer larger than $0.1001q$ for significantly large $n$ we have that $0.1\leq \frac{k(q-1)}{q^2} \leq 0.1002q$.
Taking $U=S_n$ and $W=[n]\setminus U$ in \eqref{eq:1} we get that $|S_n|=O(n^{1/2})$ and $O(n^{3/2})$ edges are added to $H_n''$ to form $H_n$, hence (a) follows.  Similarly, $|L_n|=O(n^{1/2})$. Finally as every vertex of $H_n$ of degree at least $0.101n$ either belongs to $L_n$ or is incident to at least $0.005n$ of the $O(n^{3/2})$ edges added to $H_n''$, we have that there exists $O(n^{1/2})$ such vertices.
\end{proof}

\section{Proof of Lemma \ref{lem:color}}\label{app:sec:6core}
First, let recall Lemma \ref{lem:color}.
\begin{lemma}
Let $\frac{5000}{n}\leq p \leq \frac{500\log\log n}{n}$, $G\sim G(n,p)$, $G'=H_n$ and $F_0=E(G)\cap E(G')$. Then  for $1\leq i\leq 0.02n$, $\frac{\log n}{100} \leq j \leq 0.03n$ and $\frac{n}{\log^2 n}\leq s\leq 0.01n$ the following hold.
\begin{equation}\label{eq:app:4core}
 \Pr(\cE_{s}(G,F_0))\leq  \bigg( \bfrac{en}{s} \bfrac{0.1enp}{5}^5 e^{-0.09np}\bigg)^s, \hspace{10mm} \Pr(\cE_{exp}(G,F_0)\lor \cE_{0.01n})\leq 2^{-1.1n},
\end{equation}
\begin{equation}\label{eq:app:big_s}
i^4 \binom{n}{i}\Pr(\cA_i(G,F_0))=O(1) \hspace{5mm} \text{ and } \hspace{5mm} j^72^{3j}\Pr( \cB_j(G,F_0)) \leq e^{-100j}.
\end{equation}
\end{lemma}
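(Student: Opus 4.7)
The plan is to establish the four estimates in sequence, exploiting the pseudorandom and minimum-degree properties of $H_n$ (Lemma~\ref{lem:pseudo}) combined with tail bounds on the $\operatorname{Bin}(\cdot,p)$ trials that define $G_1=G\cap H_n$. For $\Pr(\cE_s(G,F_0))$, I would use the peeling characterization of the $6$-core: if $|V_6(G_1)|\geq s$, the first $s$ peeled vertices form a set $T$ in which every vertex has at most $5$ $G_1$-neighbors in $[n]\setminus T$. Union-bounding over $T$ contributes $\binom{n}{s}\leq (en/s)^s$, and the events ``$v$ has $\leq 5$ $G_1$-neighbors in $[n]\setminus T$'' across $v\in T$ are independent (disjoint edges); each has probability at most $\binom{d_v}{5}p^5(1-p)^{d_v-5}\leq (0.1enp/5)^5 e^{-0.09np}$, using $d_v=|N_{H_n}(v)\cap([n]\setminus T)|\geq 0.1n-s\geq 0.09n$ from the $H_n$ minimum degree. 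Multiplying across $v\in T$ yields the claimed bound.

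For $\Pr(\cE_{exp}\lor \cE_{0.01n})\leq 2^{-1.1n}$, I would bound $\Pr(\cE_{0.01n})$ by applying the above with $s=0.01n$; for $np\geq 5000$ the factor $e^{-0.0009\, pn^2}$ dominates easily. For $\cE_{exp}\setminus \cE_{0.01n}$ the $6$-core has size greater than $0.99n$, so any witness $S$ (with $|S|\in[0.02n,0.27n]$) admits a set $U'\subseteq V_6(G_1)'\setminus(S\cup N_{G_1}(S))$ of size $>0.99n-3|S|\geq 0.18n$ with no $G_1$-edge to $S$. Lemma~\ref{lem:pseudo} gives $e_{H_n}(S,U')\geq 0.1|S||U'|-n^{7/4}=\Theta(n^2)$ such $H_n$-edges, each absent from $G$ independently with probability $1-p$, so the probability is $(1-p)^{\Theta(n^2)}\leq e^{-\Omega(pn^2)}\leq e^{-\Omega(n)}$ for $p\geq 5000/n$, comfortably beating the $4^n$ union bound on $(S,U')$.

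For $i^4\binom{n}{i}\Pr(\cA_i)=O(1)$, first absorb $\Pr(\cA_i\cap \cE_{0.01n})\leq 2^{-1.1n}$ via the previous step. On $\cA_i\cap \neg \cE_{0.01n}$, any witness $W$ of size $w\in[i,0.02n]$ yields the set $T:=W\cup(N_{G_1}(W)\cap V_6(G_1)')$ of size $<3w$ in which every $v\in W$ has $\geq 6$ $G_1$-neighbors (forced since $v\in V_6(G_1)'$ and the $6$-core has min-degree $\geq 6$ there), and moreover $|N_{G_1}(W)\setminus T|\leq|V_6(G_1)|<0.01n$. The probability bound then combines two edge-disjoint, independent estimates: a Chernoff lower-tail on $|N_{G_1}(W)\setminus T|$ whose expectation $\Omega(wnp)$ vastly exceeds $0.01n$, yielding decay $e^{-\Omega(wnp)}$; and a local binomial bound capturing the requirement that $G_1[T]$ contain $\geq 3w$ specific edges (from the degree-sum $\geq 6w$), of order $(Cwnp)^{3w}$ after accounting for the $\leq 0.1$ density of $H_n$. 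The union bound over $(W,N)$ contributes $\binom{n}{w}\binom{n}{2w}$; it is crucial not to union-bound over $V_6(G_1)$ itself, which would cost the prohibitive $\binom{n}{0.01n}=2^{\Theta(n)}$---instead $V_6(G_1)$ is controlled implicitly through the event $|N_{G_1}(W)\setminus T|<0.01n$. The main obstacle is tuning the split between small-$w$ and large-$w$ regimes so that both contributions combine to $O(1/(i^4\binom{n}{i}))$; the constant $5000$ in $p\geq 5000/n$ enters critically at this step.

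Finally, for $j^72^{3j}\Pr(\cB_j)\leq e^{-100j}$, I would union-bound over pairs $(W,N)$ with $|W|=w\geq j$ and $N$ a $7w$-set containing $N_{G_1}(W)$; the absence of $G_1$-edges from $W$ to the complement of size $\geq 0.91n$ contributes $(1-p)^{0.09wn}\leq e^{-0.09wnp}\leq e^{-450w}$ for $np\geq 5000$. The connectivity of $W\cup N_G(W)$ in $G$ is enforced via a Cayley-type spanning-tree count on $|W\cup N_G(W)|$ vertices, contributing only a polynomial factor (plus $p^{O(w)}$ for the tree edges being present in $G$) that is easily absorbed by $e^{-450w}$. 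The target $e^{-100j}$ then follows with room to spare after absorbing $j^72^{3j}$.
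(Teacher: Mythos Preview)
Your treatment of $\cE_s$, $\cE_{exp}\lor\cE_{0.01n}$, and $\cB_j$ is essentially the paper's argument (up to minor slips, e.g.\ the Cayley count in $\cB_j$ is $(j+x+z)^{j+x+z-2}$, not polynomial, but it is still swallowed by $e^{-0.06npj}$).

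The gap is in your handling of $\cA_i$. Your plan is to replace any enumeration over $V_6(G_1)$ by the single event $|N_{G_1}(W)\setminus T|<0.01n$ and to bound the latter via a Chernoff lower tail, claiming its mean $\Theta(wnp)$ ``vastly exceeds $0.01n$''. That comparison is false precisely in the regime that matters: for $p=5000/n$ and $w=o(n)$ one has $wnp=O(w)$, while $0.01n$ is linear in $n$; indeed for every $w<0.1/p$ the expected number of $G_1$-neighbours of $W$ outside $T$ is below $0.01n$, so the event you condition on is \emph{typical} and yields no decay. Reinterpreting the quantity as an edge count does not help either: the edges from $W$ to $[n]\setminus T$ all land in $V_6(G_1)$, hence number at most $w|V_6(G_1)|<0.01wn$, but $\E[\,\#\text{edges}\,]\approx 0.1wnp\le 500w\ll 0.01wn$ when $np\le 500\log\log n$. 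Without this tail bound you are left only with the ``$\ge 2w+z+1$ edges inside $W\cup Z$'' estimate, and after the union bound $\binom{n}{w}\binom{n}{2w}$ that gives a term of order $(C\,np)^{3w}$, which diverges as you sum over $w$.

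The device the paper uses instead is to \emph{expose} the relevant piece of $V_6(G_1)$ rather than hide it: it takes $A\subset V_6(G_1)$ maximal with the property that $A\cup W$ spans a forest in $G_1$ whose every tree meets $W$. Maximality forces $N_{G_1}(W)\cap V_6(G_1)\subseteq A$, so $N_{G_1}(W)\subseteq W\cup Z\cup A$ and one obtains the factor $(1-p)^{0.03wn}\le e^{-150w}$ for \emph{every} $w$, not just large ones. The cost of summing over $A$ is kept finite because, after collapsing $W$ to a single vertex, $A\cup\{w^*\}$ carries a labelled tree (Cayley count $(a{+}1)^{a-1}$ and edge weight $p^{a}$), and each vertex of $A$, lying outside the $6$-core, contributes a further factor $(0.02np)^5e^{-0.09np}\ll 1$. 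This tree-attachment idea is the missing ingredient in your proposal.
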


\begin{proof}
Write  $\cE_{exp}, \cE_{0.01n}, \cA_i$ and $\cB_i$ for the events  $ \cE_{exp}(G,F_0), \cE_{0.01n}(G,$ $F_0),$ $\cA_i(G,F_0)$ and $\cB_i(G,F_0)$ respectively. In addition let $G_1=([n],E(G)\cap F_0)$ and $\frac{n}{\log^2 n}\leq s\leq 0.01n$. In the event $\cE_{s}$ there exists a set $S\subset V(G_1)$ of size $s$ such that each vertex in $S$ has at most $5$ neighbors in $[n]\setminus S$. Given a fixed set $S$, as $H_n$ has minimum degree $0.1n$ and $|S|\leq 0.01n$, each vertex in $S$ has at most $5$ neighbors in $[n]\setminus S$ independently with probability at most $$\sum_{i=0}^5 \binom{0.09n}{i}p^i(1-p)^{0.09n-i}\leq \sum_{i=0}^5 (1+o(1))\bfrac{0.09enp}{i}^i e^{-0.09np}\leq  \bfrac{0.1enp}{5}^5 e^{-0.09np}.$$ 
Therefore,  
\begin{align}\label{eq:repeat}
    \Pr(\cE_{s})&\leq \binom{n}{s}\bigg( \bfrac{0.1enp}{5}^5 e^{-0.09np}\bigg)^s\leq \bigg( \bfrac{en}{s} \bfrac{0.1enp}{5}^5 e^{-0.09np}\bigg)^s. 
\end{align}
\eqref{eq:repeat} implies,
$$    \Pr(\cE_{0.01n})\leq \bigg(100e \bfrac{500e}{5}^5e^{-450}\bigg)^{0.01n} \leq 2^{-2n}.$$
In the event $\cE_{exp}\wedge \neg \cE_{0.01}$ there exists disjoint sets $A,B,S\subset V(G)$ such that (i) $V_6(G_1)\subset A$ and $|A|=0.01n$, (ii) $|S|\in [0.02n,0.27n]$ and (iii) $N_{G_1}(S)\setminus A \subseteq B $ and $|B|=2|S|$. Lemma \ref{lem:pseudo} states that there exists at least $0.1s(n-(0.01n+3s))-n^{7/4}$ edges from $S$ to $[n]\setminus (A\cup B\cup S)$. If (i) and (iii) occur, then none of these edges belongs to $G$. Therefore, 
\begin{align*}
\Pr(\cE_{exp} \lor\cE_{0.01n})& \leq \Pr(\cE_{exp}\wedge \neg \cE_{0.01n}) + \Pr(\cE_{0.01n})
\\& \leq \sum_{s=0.02n}^{0.27n}  \binom{n}{0.01n} \binom{n}{s}\binom{n}{2s}(1-p)^{0.1s(0.99n-3s)-n^{7/4}}+2^{-2n}
\\&\leq \sum_{s=0.02n}^{0.27n} (100e)^{0.01n} \bigg(\frac{ene^{-(0.1+o(1))p(0.33n-s)}}{4^{1/3}s} \bigg)^{3s}+2^{-2n}
\\& = \sum_{s=0.02n}^{0.27n} e^{0.01n\big[1+\log 100 +\frac{3s}{0.01n}\big(1 +\log\bfrac{n}{s}-\frac{\log 4}{3}-(0.1+o(1))np\big(0.33-\frac{s}{n}\big)\big]}+2^{-2n}\leq 2^{-1.1n}. 
\end{align*}

In the event $\cA_i \wedge \neg \cE_{0.01n}$ let $W\subset V(G)\setminus V_6(G_1)$ be such that $i\leq |W|\leq 0.02n$ and  $|N_{G_1}(W)\setminus V_6(G)|< 2|W|$. In addition let $Z=N_{G_1}(W)\setminus V_6(G)$ and $A\subset V_6(G_1)$ be maximal with the property that $A\cup W$ spans a forest in $G_1$ where each tree of this forest spans a vertex in $W$. Observe that $W\subset V(G)\setminus V_6(G_1)$ implies that each vertex in $W$ has at least $6$ neighbors in $W\cup Z$ and therefore the set $W\cup Z$ spans at least $(6|W|+|Z|)/2\geq 2|W|+ (|Z|+1+|Z|  )/2 \geq 2|W|+|Z|+1$ edges. In addition, as $H_n$ has minimum degree $0.1n$, for each vertex $w\in W$ none of the at least $0.03n$ edges from $w$ to $V(G_1)\setminus (W\cup Z\cup A)$ appears in $G$. Finally, if we collapse $W$ into a single vertex $w^*$ we have that $A\cup \{w^*\}$ spans a tree in $G_1$. Recall the derivation of \eqref{eq:repeat} and that $\Pr(\cE_{0.01n})\leq 2^{-2n}$ (both are used at the second inequality of the calculations below). 
For $i\leq 0.02n$ we have, 
\begin{align*}
i^4 \binom{n}{i}\Pr(\cA_i) &\leq 
  i^4 \binom{n}{i}\Pr(\cA_i\wedge \neg \cE_{0.01n})+ i^4\binom{n}{i}\Pr(\cE_{0.01n}) \nonumber
  \\&\leq i^4 \binom{n}{i} \sum_{a=0}^{0.01n} \bigg(  \binom{n}{a} (a+1)^{a-1}p^{a}\big((0.02np)^5e^{-0.09np}\big)^a \nonumber
  \\&\hspace{5mm}\times \sum_{w=i}^{0.02n}\sum_{z=0}^{2w} \binom{n}{z+w}\binom{\binom{z+w}{2}}{2w+z+1}p^{2w+z+1}(1-p)^{-0.03wn}\bigg)+ i^4\binom{n}{i}2^{-2n} \nonumber
  \\&\leq i^4 \binom{n}{i} \sum_{a=0}^{0.01n} \bigg( (a+1)(e^2np)^{a}\big((0.02np)^5e^{-0.09np}\big)^a \nonumber
  \\&\hspace{5mm}\times \sum_{w=i}^{0.02n}\sum_{z=0}^{2w} \bfrac{en}{z+w}^{z+w}(1.1(z+w))^{2w+z+1}p^{2w+z+1}e^{-0.03nwp}\bigg)+ 1\nonumber
   \\&\leq i^4 \binom{n}{i} \sum_{a=0}^{0.01n} \bigg( (a+1)(e^2\cdot 5000 \cdot (0.02\cdot 5000)^5e^{-0.09\cdot 5000})^a \nonumber
  \\&\hspace{5mm}\times \sum_{w=i}^{0.02n}\sum_{z=0}^{2w} (enp)^{z+w}(1.1(z+w)p)^we^{-0.03nwp}\bigg)+ 1\nonumber
     \\&\leq i^4 \binom{n}{i} \sum_{w=i}^{0.02n}2w (enp)^{3w}(3.3wp)^{w}e^{-0.03nwp}+ 1\nonumber
     \\&\leq i^4 \bfrac{en}{i}^i \sum_{w=i}^{0.02n}2w \big((5000e)^3 \cdot 3.3wp \cdot e^{-150} \big)^w \leq 3i^5\bfrac{en}{i}^i (ip \cdot e^{-120} )^i  \leq 2.
\end{align*}
Now let $\frac{\log n}{100}\leq j\leq 0.03n$. In the event     $\mathcal{B}_j$ we may identify disjoint sets  $X,W,Z\subset [n]$ such that (i) $|W|=j$, (ii) $Z= N_{G_1}(W)$ and $|Z|\leq 7j$, and 
(iii) $X$ is a minimum size subset of $[n]\setminus (W\cup Z)$ with the property that $X\cup W\cup Z$ is connected in $G$. As $W\cup Z$ spans at most $j=|W|$ components in $G_1$, hence in $G$ we have that (iii) implies, $|X|\leq j-1\leq j$. Finally, Lemma \ref{lem:pseudo} implies that there exist at least $0.09j(n-9j)\geq 0.06nj$ edges from $W$ to $[n]\setminus (X\cup Z\cup W)$ none of which appears in $G$.
Therefore for $(\log n)/100\leq j\leq 0.03n$,
\begin{align*}
    j^72^{3j}\Pr(\cB_j) &\leq j^72^{3j}\sum_{z=0}^{7j}\sum_{x=0}^{j} \binom{n}{j}\binom{n}{z}\binom{n}{x} (j+z+x)^{j+z+x-2}p^{j+x+z-1}  (1-p)^{0.06nj}
    \\ &\leq j^72^{3j}\sum_{z=0}^{7j}\sum_{x=0}^{j} \bfrac{en}{j+x+z}^{j+x+z} (j+z+x)^{j+z+x-2}p^{j+x+z-1}  (1-p)^{0.065nj}
    \\ &\leq j^72^{3j}\sum_{z=0}^{7j}\sum_{x=0}^{j} p^{-1}j^{-2} (enp)^{j+x+z} (1-p)^{0.06nj}
    \leq 7j^72^{3j} (enp)^{9j} e^{-0.06npj} 
    \\&\leq 7j^7\bigg(8(enp)^{9}e^{-0.06np}\bigg)^j 
    \leq 7j^7\bigg(8(5000e)^{9}e^{-300}\bigg)^j \leq 7j^7e^{-200j}\leq e^{-100j}.
\end{align*}
At the last inequality, we used that $j\geq (\log n)/100$.
\end{proof}

\section{Minimum degree 2}\label{subsec:app:min2}
 Let  $G(n,m)$ be the uniform random graph model i.e. if $G\sim G(n,m)$ then $G$ is a graph chosen uniformly at random from all graphs on $[n]$ with $m$ edges. For $m=O(n\log\log n)$ one can generate $G_{n,m}$ via the following process (see \cite{anastos2020}). Let $X_1,X_2,...,X_n$ be i.i.d random variables $Poisson(\lambda)$ and $M$ a multiset that contains $X_i$ copies of $v_i$. Let $X=\sum_{i=1}^n X_i$ and $A=a_1,a_2,...,a_X$ be a random permutation of the elements in $M$. In the event that $X$ is even let $G_A$ be the multi-graph on $\{v_1,v_2,...,v_n\}$ where the edge set is given by $\{a_{2i-1}a_{2i}:1\leq i\leq X/2\}$. In the event that $X=2m$ and $G_A$ is simple we have that $G_A$ has the same distribution as $G_{n,m}$. The probability of $X$ being even, $X=2m$ and $A$ being simple is larger than $\frac{1}{n}$ when $\lambda=\frac{2m}{n}$ and $m=O(n\log\log n)$ (see \cite{frieze2016}, Chapter 11). Therefore,
\begin{align*}
    \Pr(\delta(G_{n,m})\geq 2)&=     \Pr(\delta(G_{A})\geq 2| G_A \text{ is simple}, X=2m)
    \\&= \Pr(\min\{X_1,X_2,...,X_n\}\geq 2|  G_A \text{ is simple }, X=2m) \\&\leq n\Pr(\min\{X_1,X_2,...,X_n\}\geq 2)
    \leq n\bigg(1-\frac{2m}{n} e^{-\frac{2m}{n}}\bigg)^n \leq ne^{- 2m  e^{-\frac{2m}{n}}}.
    \end{align*}
This completes the proof of Lemma \ref{lem:degree}.

\end{appendices}

\end{document}